\documentclass [11pt, reqno]{amsart}
\usepackage{amsmath}
\usepackage{mathrsfs,amsmath}
\usepackage{setspace}
\usepackage{CJK}
\usepackage{mathtools}
\usepackage{amsthm}
\usepackage{amssymb}
\usepackage{enumerate}
\usepackage{verbatim}
\usepackage{bbm}
\usepackage{mathabx}

\usepackage[%pagebackref,
hypertexnames=false, colorlinks, citecolor=black, linkcolor=blue, urlcolor=red]{hyperref}
\hypersetup{bookmarksdepth=3}

\newenvironment{entry}
{\begin{list}{X}%
		{%
			\setlength{\labelwidth}{35pt}%
			\setlength{\leftmargin}{\labelwidth}%\labelsep}%
			\addtolength{\leftmargin}{\labelsep}%
			\setlength{\itemsep}{.4pc}%
		}%
	}%
	{\end{list}}  

\setlength{\textwidth}{15.5cm}			  %
\setlength{\textheight}{22cm}			  %
\setlength{\topmargin}{-.5cm}			  %
\setlength{\oddsidemargin}{6mm}			  %
\setlength{\evensidemargin}{6mm}		  %
\setlength{\abovedisplayskip}{3mm}		  %
\setlength{\belowdisplayskip}{3mm}		  %
\setlength{\abovedisplayshortskip}{0mm}	  %
\setlength{\belowdisplayshortskip}{2mm}	  %
\setlength{\baselineskip}{12pt}			  %
\setlength{\normalbaselineskip}{12pt}	  %
\normalbaselines						  %

\newcommand{\D}{\mathbb{D}}
\newcommand{\T}{\mathbb{T}}

\newcommand{\Z}{{\mathbb  Z}}
\newcommand{\N}{{\mathbb  N}}
\newcommand{\C}{{\mathbb  C}}
\newcommand{\R}{{\mathbb  R}}
\newcommand{\cH}{\mathcal{H}}

\newcommand{\cU}{\mathcal{U}}
\DeclareMathOperator\Arg{Arg}

\newcommand{\fJ}{\mathfrak{J}}

\numberwithin{equation}{section}

\newtheorem{thm}{Theorem}[section]
\newtheorem{lm}[thm]{Lemma}

\newtheorem{prop}[thm]{Proposition}
\newtheorem*{prop*}{Proposition}

\newcommand{\Ker}{\operatorname{Ker}}
\newcommand{\Ran}{\operatorname{Ran}}

\renewcommand{\ker}{\operatorname{Ker}}
\newcommand{\var}{\operatorname{var}}
\newcommand{\card}{\operatorname{card}}

\newcommand{\wt}{\widetilde}
\newcommand{\wh}{\widehat}

\newcommand{\im}{\operatorname{Im}}
\newcommand{\Dom}{\operatorname{Dom}}
\newcommand{\re}{\operatorname{Re}}
\newcommand{\ran}{\operatorname{Ran}}
\newcommand{\spn}{\operatorname{span}}
\newcommand{\cspn}{\overline{\spn}}
\newcommand{\supp}{\operatorname{supp}}
\newcommand{\rank}{\operatorname{rank}}
\newcommand{\dist}{\operatorname{dist}}

\newcommand{\dom}{\operatorname{Dom}}

\newcommand{\cC}{\mathcal{C}}
\newcommand{\cK}{\mathcal{K}}
\newcommand{\cM}{\mathcal{M}}
\newcommand{\cT}{\mathcal{T}}
\newcommand{\cE}{\mathcal{E}}

\newcommand{\cV}{\mathcal{V}}

\newcommand{\fC}{\mathfrak{C}}

\newcommand{\fc}{\mathfrak{c}}

\newcommand{\1}{\mathbf{1}}

\newcommand{\dd}{\mathrm{d}}

\newcommand{\vf}{\varphi}
\newcommand{\bvf}{\boldsymbol{\varphi}}
\newcommand{\bpsi}{\boldsymbol{\psi}}

\newcommand{\bu}{\mathbf{u}}

\newcommand{\la}{\langle}
\newcommand{\ra}{\rangle}

\newcommand{\ci}[1]{{\rule[-0.7ex]{0ex}{1.0ex}}_{\!#1}}
%\ci --- Capital index

\newcommand{\tp}[2]{\texorpdfstring{#1}{#2}}
\newcommand{\tup}[1]{\textup#1}

\theoremstyle{remark}
\newtheorem{rem}[thm]{Remark}
\newtheorem*{rem*}{Remark}
\newtheorem{defin}[thm]{Definition}
%\DeclareUnicodeCharacter{FB01}{fi}

\renewcommand{\labelenumi}{\textup{(\roman{enumi})}}

\newcounter{vremennyj}

\newcommand\cond[1]{\setcounter{vremennyj}{\theenumi}\setcounter{enumi}{#1}\labelenumi\setcounter{enumi}{\thevremennyj}}

%%%%%%%%%%%%%%%%%%%%%%
 
\begin{document}
\title[Inverse spectral problem]{A dynamical system approach 
	to the inverse spectral problem for Hankel operators: the general case}
 
\author{Zhehui Liang}
\author{Sergei Treil}
\thanks{Work of S.~Treil is 
	supported  in part by the National Science Foundation under the grants  %DMS-1600139, 
	DMS-1856719, DMS-2154321}
\address{Department of Mathematics \\ Brown University \\ Providence, RI 02912 \\ USA}
\email{treil@math.brown.edu}
\begin{abstract}
We study the inverse problem for the Hankel operators in the general case. Following the work of 
G\'erard--Grellier, the spectral data is obtained from the pair of Hankel operators $\Gamma$ and 
$\Gamma S$, where $S$ is the shift operator. 

The theory of complex symmetric operators provides a convenient language for the description of the 
spectral data. We introduce the abstract spectral data for the general case, and  use the dynamical 
system approach, originated in the \cite{Peller 1995}, to reduce the problem to asymptotic 
stability of some contraction, constructed from the spectral data. 

The asymptotic stability is usually the hard part of the problem, but in the investigated earlier 
by G\'erard--Grellier  case of compact operators we get it almost for free. 

For the case of compact operators we get a concrete representation of the abstract spectral data as 
two intertwining sequences of singular values, and two sequences of finitely supported probability 
measures. This representation is different from one treated by G\'erard--Grellier, and we provide 
the translation from one language to the other; theory of Clark measures is instrumental there.   

\end{abstract}

\maketitle

\setcounter{section}{-1}
\section{Notation}
All operators act on or between Hilbert spaces, and we consider only separable  Hilbert spaces. 
\begin{entry}
\item[$A^*$] adjoint of the operator $A$;
\item[$|A|$] modulus of the operator $A$, $|A|:=(A^*A)^{1/2}$;
\item [$P\ci E$] the orthogonal projection  onto a subspace $E$.
\end{entry}

In this paper we will use the \emph{linear algebra notation}, identifying vector $a$ in a Hilbert 
space 
$\cH$ with the operator  $\alpha\mapsto \alpha a$ acting from scalars to $\cH$. Then the 
symbol $a^*$ denotes the (bounded) linear functional $x\mapsto ( x, a ) $. 

\section{Introduction and some preparation work}
\subsection{Introduction to Hankel operators}

A Hankel operator is a bounded linear operator in $\ell^2=\ell^2(\Z_+)$ with matrix whose entries 
depend on the sum of indices, 
\begin{align*}
\Gamma = \left(\gamma_{j+k}\right)_{j,k\ge0} \,.
\end{align*}
Denoting by $S$  the \emph{shift operator} in $\ell^2$,
\begin{align*}
S(x_0, x_1, x_2, \ldots ) &= (0, x_0, x_1, x_2, \ldots) \,, 
\intertext{and by $S^*$ its adjoint (the \emph{backward shift}), }
S^* (x_0, x_1, x_2, \ldots ) &= (x_1, x_2, x_3,  \ldots ), 
\end{align*}
we can see that an operator $\Gamma$ on $\ell^2$ is a Hankel operator if and only if 
\begin{align*}
\Gamma S = S^* \Gamma;  
\end{align*}
sometimes this formula is used as the definition of a Hankel operator. 

The Fourier Transform  $\{a_k\}\ci{k\in\Z}\mapsto \sum_{k\in\Z}a_k z^k$ on $\ell^2(\Z)$ (sometimes 
called  $z$-transform) identifies $\ell^2(\Z)$ with $L^2(\T)$, and $\ell^2(\Z_+)$ with the Hardy 
space $H^2\subset L^2(\T)$, so Hankel operators are often treated as the operators in the Hardy 
space. 

Hankel operators play important role in analysis, connecting function theory and operator theory. 
The inverse spectral problem for Hankel operators was initially motivated by the theory of stationary random 
processes.  Investigating geometry of the ``past'' and ``future'' of such processes,  V.~V.~Peller 
and S.~V.~Khruschev posed a problem of  describing all non-negative self-adjoint operators 
unitarily equivalent to the modulus of a Hankel operator.  After some preliminary results, see 
\cite{khrushchev 1984}, the problem was fully solved by S.~Treil \cite{Treil 1985}. It turned out 
that any 
non-negative 
self-adjoint operator $A$ that is not invertible and whose kernel is either infinite-dimensional or 
trivial is unitarily equivalent to the modulus of a Hankel operator (note that trivially the 
modulus of a Hankel operator is always not invertible and cannot have finite-dimensional kernel). 

Later, motivated by dynamical systems, A.~Megretskii, V.~V.~Peller, and S.~Treil completely 
described self-adjoint operators unitarily equivalent to some Hankel operator, see \cite{Peller 1995}. The 
description was more complicated than for a modulus, and involved some ``almost symmetry'' of the 
spectral measure. 

We should mention here, that in both problems the solution (a Hankel operator) is trivially not 
unique. 

Recently the interest in the inverse spectral problem for Hankel operators was renewed in  
connection with the so-called cubic Szeg\"{o} equation, which is a model completely integrable 
Hamiltonian system. In pioneering series of papers \cite{Gerard 2010}, \cite{Gerard 2012} P.~Gerard and S.~Grellier  
investigated the inverse spectral problem for Hankel operators and its connection with the Szeg\"{o} 
equation. For the case of compact operators they have the inverse problem completely solved. One of 
their discoveries was that the spectral data not for $\Gamma$, but for \emph{both} $\Gamma$ and 
$\Gamma S$ completely determines the operator $\Gamma$, so the inverse problem have a unique 
solution.

\section{Preliminaries}
\subsection{Direct integral of Hilbert spaces and model for self-adjoint operators}
Let us recall some basic facts about direct (a.k.a.\ von Neuman) integral of Hilbert spaces, and 
the model for self-adjoint operators, see \cite{Birman 1987}. In this paper we present a simplified 
version of the direct integral, which is sufficient for our purposes. 

Let $\cE$ be a separable Hilbert space, and let $\{e_k\}_{k=1}^\infty$ be an orthonormal basis in 
$\cE$. Let $\mu$ be a finite compactly supported Borel measure on $\R$, and let $N:\supp \mu \to 
\N \cup\{\infty\}$ be the  dimension function, i.e.\ a Borel measurable function (we only need 
$N$ to be defined $\mu$-a.e.). 

Denote $\cE(t) := \cspn\{e_k: 1\le k \le N(t)\}$, and define the direct integral 
\begin{align}
\label{e: DirInt}
\cH := \int_\R \!\!\oplus \,\, \cE(t)\dd\mu(t) 
\end{align}
as the subspace of the $\cE$-valued $L^2$-space $L^2(\mu; \cE)$, consisting of all functions $f\in 
L^2(\mu;\cE)$ such that 
\begin{align*}
f(t)\in \cE(t) \qquad \mu\text{-a.e.}
\end{align*}
The direct integral form of the spectral theorem states that a self-adjoint operator in a separable 
Hilbert space (recall that in this paper all Hilbert spaces are separable) is unitarily equivalent 
to the multiplication operator $M$ by the independent 
variable
\begin{align*}
Mf(t) =t f(t)
\end{align*}
in a direct integral of form \eqref{e: DirInt}. 

The spectral type $[\mu]$ of the measure $\mu$, i.e.~the equivalence class of all measures mutually 
absolutely continuous with $\mu$, and the dimension function $N$ (defined $\mu$-a.e.) give us a 
complete set of unitary invariants for a self-adjoint operator. This means that two self-adjoint 
operators are unitarily equivalent if and only if the measures in the direct integral are mutually 
absolutely continuous and the dimension functions coincide $\mu$-a.e.

\subsection{Complex Symmetric Operators}
\label{s: conj and CSO}
Matrix of a Hankel operator $\Gamma$ is symmetric, and a convenient way to explore that fact is to 
use the theory  of so called \emph{complex symmetric operators}. 
%Such operators have some nice properties, distinguishing them from other general operators.

Let us recall some main  definitions and basic facts,  
% about complex symmetric operators, cf
%of $C-$symmetric operators and some terminologies from 
cf \cite{Garcia 2007}.

%First we need the notion of \emph{conjugation}. Let $\cH$ be a complex Hilbert space. 
\begin{defin}
An operator $\fC$ in a complex Hilbert space $\cH$ is called a \emph{conjugation}, if 
%and only if 
it is
\begin{enumerate}
    \item \emph{conjugate-linear}:  $\fC(\alpha x + \beta y) = \bar{\alpha}\fC x + \bar{\beta}\fC 
    y$ for all $x,y \in \cH$ ;
    \item and \emph{involutive}:  $\fC^2 = I$ ;
    \item and \emph{isometric}: $\| \fC x \| = \| x \|$ for all $x \in \cH$.
\end{enumerate}
\end{defin}

\begin{defin}
Let $\fC$ be a conjugation on Hilbert space $\cH$. A bounded linear operator $T$ on $\cH$ is called 
\emph{$\fC$-symmetric} if $T^* = \fC T \fC$.
\end{defin}

It is well-known and not hard to see that a conjugate-linear operator $\fC$ is a conjugation if and 
only if there exists an orthonormal basis $\{ e_n \}_{n=1}^{\infty}$ of $\cH$ such that  $\fC e_n = 
e_n$ for all $n$;  such basis 
%$\{ e_n\}_{n=1}^{\infty}$ 
is often called a \emph{$\fC$-real orthonormal basis}. 
%In addition, we have 
%$\fC(\sum\limits_{n}\alpha_n e_n) = 
%\sum\limits_{n}\overline{\alpha_n}e_n$ for all $\{ \alpha_n \}_{n=1}^{\infty} \in \ell^2$. 
Note that given a cojugation $\fC$ the choice of  $\fC$-real basis is not unique. 

It is easy to see that for a $\fC$-symmetric operator $T$ its matrix in any $\fC$-real orthonormal 
basis is symmetric. 

On the other hand, if a matrix of an operator $T$ in some orthonormal basis $\{e_n\}_{n}$ is 
symmetric, then $T$ is $\fC$-symmetric for the conjugation $\fC$ given by 
$\fC(\sum\limits_{n}\alpha_n e_n) = \sum\limits_{n}\overline{\alpha}_n e_n$. Note that a 
conjugation $\fC$ that $T$ is $\fC$-symmetric is generally not unique, and the above is just one 
possible choice.  

The matrix of a Hankel operator in the standard basis in $\ell^2$ is symmetric, thus Hankel 
operator is $\fC$-symmetric with respect to the canonical conjugation on $\ell^2$, 
\begin{align}
\label{e: canonical conjugation}
    \fC(z_1,z_2,z_3, \ldots ) = (\bar{z}_1,\bar{z}_2,\bar{z}_3 , \ldots) . 
\end{align}

Finally, we will need a simple but important formula for an arbitrary conjugation $\fC$: 
\begin{align}
    \label{property of conjugation for an inner product}
    \langle \fC x , y \rangle = \langle \fC y , x \rangle\qquad \forall x, y\in\cH. 
\end{align}
It can be easily proved using properties \cond1--\cond3 of a conjugation, or just by decomposing 
$x$ and $y$ in a $\fC$-real orthonormal basis.

\subsection{Polar decomposition of \texorpdfstring{$\fC$}{C}-symmetric operators}

Let us recall that an operator $U$ is called a \emph{partial isometry} if its restriction to $(\ker 
U)^\perp$ is an isometry; note that $(\ker U)^\perp$ does not need to be $U$-invariant.

A bounded operator in a Hilbert space admits a  \emph{polar decomposition} 
$T = U |T|$, where $|T|:= (T^*T)^{1/2}$, and $U$ is a partial isometry with $\ker U=\ker T$; note 
that under these assumptions the partial isometry $U$ is unique.  Sometimes the partial isometry 
$U$ can be replaced by a unitary operator (even in $\ker T \ne\{0\}$), although unlike the 
finite-dimensional case this is not always possible. 

For a complex symmetric operator  we can say a bit more about its polar decomposition. Following  
\cite{Garcia 2007}, we say that    
a conjugate-linear operator $\fJ$ in Hilbert space is  a \emph{partial conjugation} if $(\ker 
\fJ)^\perp$ is invariant for $\fJ$ and $\fJ |\ci{(\Ker \fJ)^{\perp}}$ is a conjugation. 
%Denoting 
%$\fK := (\ker \fJ)^\perp$, then we often say that $\fJ$ is a partial conjugation on $\fK$. 
Again, following \cite{Garcia 2007} we say a conjugation $\fJ$ is supported on (a subspace) $ \cK$ 
if $\cK = (\ker \fJ)^\perp$.  

We will need the following theorem,  \cite[Theorem 2]{Garcia 2007}:

\begin{thm}
\label{t: GPD}
A bounded  $\fC-$symmetric operator $T$ admits the polar decomposition $T = U |T|$ with $U=\fC\fJ$ 
where  $\fJ$ is a partial conjugation, $\ker \fJ =\ker T$,  commuting with 
$|T|$.

\end{thm}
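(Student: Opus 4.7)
The plan is to build $\fJ$ directly from the standard polar decomposition. Let $T=V|T|$ be the ordinary polar decomposition, with $V$ a partial isometry and $\ker V=\ker T$. Since $\fC$ is involutive, I would \emph{define} $\fJ:=\fC V$, so that automatically $V=\fC\fJ$ and $\fJ$ is conjugate-linear, with $\ker\fJ=\ker V=\ker T$. All that remains is to verify that (a) $\fJ$ is a partial conjugation (involutive and isometric on the orthogonal complement of its kernel) and (b) it commutes with $|T|$.

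The technical heart of the argument is the identity
\begin{align*}
\fC V\fC = V^*,
\end{align*}
which I would obtain from the uniqueness of the polar decomposition applied to $T^*$. First, from $T^*=\fC T\fC$ one gets $\fC T^*\fC=T$, hence $\fC\,T^*T\,\fC=TT^*$, so by uniqueness of nonnegative square roots
\begin{align*}
\fC|T|\fC=|T^*|.
\end{align*}
Writing $T^*=\fC T\fC=\fC V\fC\cdot\fC|T|\fC=(\fC V\fC)\,|T^*|$ and checking that $\fC V\fC$ is a partial isometry with kernel $\ker T^*$ (both facts being immediate from $\fC\ker T=\ker T^*$ and the fact that $\fC$ is isometric), uniqueness of the polar decomposition gives $\fC V\fC=V^*$ as claimed.

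From this identity the rest is essentially bookkeeping. For the partial conjugation property, $\fJ^2=\fC V\fC V=V^*V=P\ci{(\ker T)^\perp}$, so on $(\ker\fJ)^\perp$ the operator $\fJ$ is involutive; it is conjugate-linear by construction and isometric there because $V$ is isometric on $(\ker T)^\perp$ and $\fC$ is globally isometric. One also needs that $(\ker\fJ)^\perp$ is $\fJ$-invariant, but this follows from $\fC\,\overline{\ran T}=\fC(\ker T^*)^\perp=(\ker T)^\perp$ and the fact that $V$ sends $(\ker T)^\perp$ into $\overline{\ran T}$. For the commutation with $|T|$ I would compute both sides directly: $\fJ|T|=\fC V|T|=\fC T$, while using $\fC|T|=|T^*|\fC$ and $|T^*|V=V|T|V^*V=V|T|=T$ (since $V^*V$ is the projection onto $(\ker T)^\perp\supset\overline{\ran|T|}$) one gets $|T|\fJ=|T|\fC V=\fC|T^*|V=\fC T$ as well.

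The only nontrivial step is the identity $\fC V\fC=V^*$, and I expect that to be the main obstacle; everything else reduces to combining it with standard manipulations of the polar decomposition and with the transport formula $\fC|T|\fC=|T^*|$.
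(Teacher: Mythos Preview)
Your argument is correct. The paper does not give its own proof of this theorem but simply cites \cite[Theorem 2]{Garcia 2007}; your approach---defining $\fJ=\fC V$ from the ordinary polar decomposition $T=V|T|$ and deriving everything from the key identity $\fC V\fC=V^*$ together with the transport formula $\fC|T|\fC=|T^*|$---is essentially the standard proof from that reference, so there is nothing to compare.
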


\begin{rem}
Note that in the above theorem the partial conjugation $\fJ$ is unique and the partial isometry $U$ 
is $\fC-$symmetric. 
\end{rem}
\begin{rem}
\label{r: partial conjugation can be extended to a conjugation}
We can always find a conjugation $\wt \fJ$, such that $\wt \fJ = \fJ$ on $(\Ker 
\fJ)^{\perp}$. To do that we can take an arbitrary conjugation $\fJ_1$ on $\Ker 
\fJ$, and then define $\wt \fJ\big|_{(\ker\fJ)^\perp} := \fJ_1$. 
Then $T = \fC\wt\fJ |T| = \wt U |T|$, where $\wt U =\fC\wt\fJ$ is clearly a $\fC$-symmetric unitary 
operator. 
\end{rem}

\begin{comment}
\begin{thm}
\label{t:generalized polar decomposition}
If $T: \cH \to \cH$ is a bounded $\fC$-symmetric operator, then we can write $T = \fC \fJ |T|$, 
where $\fJ$ is a conjugation that commutes with $ |T| = \sqrt{T^* T}$.
\end{thm}
\end{comment}

%Now denote $\Gamma_1 = S^* \Gamma$. By Theorem \ref{t: GPD}, there exist 
%partial conjugations $\fJ$ and $\fJ_1$ commuting with $|\Gamma|$ and $|\Gamma_1|$  respectively, 
%$\ker \fJ =\ker\Gamma$, $\ker\fJ_1 =\ker\Gamma_1$, such that 
%%
%\begin{align}
%\label{e:PolDec01}
%\Gamma = \fC\fJ |\Gamma|, \qquad \Gamma_1 = \fC\fJ_1 |\Gamma_1|  .    
%\end{align}

\subsection{Abstract spectral data for Hankel operators}
\label{s: ASD Hankel}

For a Hankel operator $\Gamma$ we will always denote $\Gamma_1:= \Gamma S = S^*\Gamma$ (the 
identity $\Gamma S = S^*\Gamma$ is an alternative definition of a Hankel operator, see the 
Introduction above). 

W can write 
\begin{align*}
    |\Gamma|^2 - |\Gamma_1|^2 = \Gamma^* \Gamma - \Gamma^* S S^* \Gamma = \Gamma^* (I - SS^*) 
    \Gamma = \Gamma^* (e_0 e_0^*) \Gamma, 
\end{align*}
so denoting $u := \Gamma^* e_0$ we have the rank one perturbation relation
\begin{align}
    \label{equation for Hankel}
    |\Gamma|^2 - |\Gamma_1|^2 = uu^* . 
\end{align}
%%
%i.e.~$|\Gamma_1|^2$ is a rank one perturbation of $|\Gamma|^2$. 

We will need the following simple statement. 
\begin{lm}
\label{l:Jp}
Let  $R=R^*$ be  a self-adjoint operator in a Hilbert space $\cH$, and let $p\in\cH$. 

There exists a conjugation $\fJ_p$ commuting with  $R$  and preserving $p$, i.e.~such that
$\fJ_p p=p$. 

Moreover, $\fJ_p$ is uniquely determined on
$\cH_0:=\cspn\{ R^n p: n\ge0\}$, and $\cH_0$ is a reducing subspace for $\fJ_p$, meaning that both 
$\cH_0$ and $\cH_0^\perp$ are $\fJ_p$-invariant. 

Finally,  $\fJ_p$ is unique if and only if $p$ is cyclic for $R$.
\end{lm}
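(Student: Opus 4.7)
The plan is to use the spectral theorem (either the scalar form on the cyclic subspace or the direct integral model recalled earlier) to produce a canonical conjugation on $\cH_0$ and then extend it to $\cH_0^\perp$ where we have freedom.

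First I would restrict attention to $\cH_0 = \cspn\{R^n p : n\ge 0\}$, which is $R$-invariant and, since $R$ is self-adjoint, so is $\cH_0^\perp$. On $\cH_0$ the vector $p$ is cyclic for $R\big|_{\cH_0}$, so the scalar spectral theorem produces a unitary $V \colon \cH_0 \to L^2(\mu)$ (with $\mu$ the scalar spectral measure of $p$) such that $V R\big|_{\cH_0} V^{-1}$ is multiplication by the independent variable $t$ and $V p = \1$. Pulling back the natural complex conjugation $f \mapsto \bar f$ on $L^2(\mu)$ via $V$ gives a conjugation $\fJ_0$ on $\cH_0$; since $t$ is real, $\fJ_0$ commutes with $R\big|_{\cH_0}$, and $\fJ_0 p = p$. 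Concretely, on the dense subspace of polynomials in $R$ applied to $p$,
\begin{align*}
\fJ_0\Bigl(\sum_n \alpha_n R^n p\Bigr) = \sum_n \bar\alpha_n R^n p,
\end{align*}
and well-definedness follows from $\|\sum\alpha_n t^n\|_{L^2(\mu)} = \|\sum\bar\alpha_n t^n\|_{L^2(\mu)}$.

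To extend to all of $\cH$, apply the direct integral model to $R\big|_{\cH_0^\perp}$, represent it as multiplication by $t$ in some $\int^\oplus \cE(t)\dd\nu(t)$, and pull back the pointwise conjugation (in the fixed $\fC$-real basis of $\cE$); call the resulting conjugation $\fJ_1$ on $\cH_0^\perp$. Then $\fJ_p := \fJ_0 \oplus \fJ_1$ is a conjugation on $\cH = \cH_0 \oplus \cH_0^\perp$, commutes with $R$, and preserves $p$. The fact that $\cH_0$ is $\fJ_p$-invariant is built in by construction, and $\cH_0^\perp$ is then automatically $\fJ_p$-invariant using \eqref{property of conjugation for an inner product}: for $x\in\cH_0^\perp$ and $y\in\cH_0$, $\langle \fJ_p x, y\rangle = \langle \fJ_p y, x\rangle = 0$ since $\fJ_p y\in\cH_0$.

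For uniqueness on $\cH_0$, suppose $\fJ$ is any conjugation commuting with $R$ and fixing $p$. Induction gives $\fJ R^n p = R^n \fJ p = R^n p$ for all $n\ge 0$, so by conjugate-linearity $\fJ$ agrees with $\fJ_0$ on the dense subspace $\spn\{R^n p\}$; isometry then forces $\fJ = \fJ_0$ on $\cH_0$. Finally, if $p$ is cyclic then $\cH_0 = \cH$ and uniqueness is complete; if $p$ is not cyclic, then $\cH_0^\perp \ne \{0\}$ and we can replace $\fJ_1$ by any other conjugation on $\cH_0^\perp$ commuting with $R\big|_{\cH_0^\perp}$ (there are many: for instance, on a nontrivial fiber $\cE(t)$ one may precompose the pointwise conjugation with any orthogonal involution), producing a genuinely different $\fJ_p$.

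The only mildly delicate step is the existence of \emph{some} conjugation commuting with $R\big|_{\cH_0^\perp}$; I expect this to be the main technical point, but it is handled cleanly by the direct integral representation \eqref{e: DirInt}, where the fiberwise coordinate conjugation in the fixed basis $\{e_k\}$ commutes with multiplication by the real variable $t$.
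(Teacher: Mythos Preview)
Your proof is correct and follows essentially the same approach as the paper: define $\fJ_p$ on $\cH_0$ via the formula $\sum\alpha_n R^n p \mapsto \sum\bar\alpha_n R^n p$ (you justify well-definedness via the scalar spectral representation, the paper simply asserts it), extend to $\cH_0^\perp$ using the fiberwise coordinate conjugation in the direct integral model, and derive uniqueness on $\cH_0$ from $\fJ R^n p = R^n p$. One small remark: the reducing claim in the lemma is about \emph{any} conjugation $\fJ_p$ with the stated properties, not just the constructed one; your uniqueness argument does supply this (since any such $\fJ$ agrees with $\fJ_0$ on $\cH_0$, hence preserves $\cH_0$, and then your orthogonality computation via \eqref{property of conjugation for an inner product} handles $\cH_0^\perp$), but the write-up would be cleaner with the uniqueness step placed before the reducing claim.
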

%It follows from the above lemma that  
\begin{proof}%[Proof of Lemma \ref{l:Jp}]
Defining $\fJ_p$ on $\cH_0$ by  
\begin{align}
\label{e: fJ_p 01}
\fJ_p\Bigl(\sum_{k\ge0} \alpha_k R^k p \Bigr) = \sum_{k\ge0} \overline\alpha_k R^k p , 
\end{align}
we get a conjugation on $\cH_0$, commuting with $R\big|_{\cH_0}$. 

To extend $\fJ_p$ to all $\cH$ we just need to construct a conjugation on $\cH_0^\perp$ commuting 
with $R\big|_{\cH_0^\perp}$, or equivalently, construct a conjugation commuting with the 
multiplication operator $M$ by the independent variable in the direct integral representation 
\eqref{e: DirInt} of 
$R\big|_{\cH_0^\perp}$.  But that is trivial, the conjugation 
\begin{align*}
\sum_k f_k e_k \mapsto \sum_k \overline f_k e_k
\end{align*}
($f_k$ are scalar-valued functions) give one possibility. 

As for the uniqueness, if $\fJ_p$ commutes with $R$ then $\fJ_p R^k p = R^k p$ for all $k\ge 0$, so 
by conjugate linearity $\fJ_p$ on $\cH_0$ must be given by \eqref{e: fJ_p 01}. We can see from 
%\eqref{e: fJ_p 01} 
this formula that $\fJ_p\cH_0=\cH_0$. Since $\fJ_p$ is an isometry, it preserves orthogonality, so 
$\fJ_p (\cH_0^\perp)\subset \cH_0^\perp$. 

Finally, a conjugation commuting with the multiplication $M$ by the independent variable is clearly 
not unique, so $\fJ_p$ is unique if and only if $\cH_0=\cH$. 
\end{proof}

The lemma below gives a complete description of all conjugations $\fJ_p$ from Lemma \ref{l:Jp}. 
\begin{lm}
\label{l:equiv of Jp}
Let $\fJ_p$ be a conjugation from Lemma \ref{l:Jp}.  Then any other such conjugation $\fJ_p'$   is 
given by $\fJ_p'=\Psi \fJ_p$, where $\Psi$ is unitary $\fJ_p$-symmetric operators commuting with 
$R$ and preserving $p$, $\Psi p=p$. 
\end{lm}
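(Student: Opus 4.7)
The plan is to establish the bijective correspondence $\fJ_p' \leftrightarrow \Psi$ via the formula $\Psi := \fJ_p' \fJ_p$, and then verify in both directions that the required properties are preserved.

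First, given another conjugation $\fJ_p'$ commuting with $R$ and fixing $p$, I would define $\Psi := \fJ_p' \fJ_p$. Because both $\fJ_p$ and $\fJ_p'$ are conjugate-linear, their composition $\Psi$ is linear; because both are isometric involutions, $\Psi$ is an isometric bijection, hence unitary. Commutation with $R$ follows by sandwiching: $\Psi R = \fJ_p' \fJ_p R = \fJ_p' R \fJ_p = R \fJ_p' \fJ_p = R \Psi$. Preservation of $p$ is immediate from $\Psi p = \fJ_p'(\fJ_p p) = \fJ_p' p = p$. Finally $\fJ_p' = \Psi \fJ_p$ because $\fJ_p^2 = I$.

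The only nontrivial point is the $\fJ_p$-symmetry of $\Psi$. Since $\Psi$ is linear and unitary, $\Psi^* = \Psi^{-1}$, and this inverse can be computed directly using that both $\fJ_p$ and $\fJ_p'$ are involutions:
\begin{align*}
\Psi^* = \Psi^{-1} = (\fJ_p' \fJ_p)^{-1} = \fJ_p \fJ_p' = \fJ_p (\fJ_p' \fJ_p) \fJ_p = \fJ_p \Psi \fJ_p,
\end{align*}
which is exactly the definition of $\fJ_p$-symmetry.

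For the converse, given any unitary $\fJ_p$-symmetric $\Psi$ commuting with $R$ and fixing $p$, I would set $\fJ_p' := \Psi \fJ_p$ and check the three defining properties of a conjugation. Conjugate-linearity and isometry are clear from composing a linear unitary with a conjugation. For the involutive property, I use $\fJ_p$-symmetry in the form $\fJ_p \Psi = \Psi^* \fJ_p$ (obtained by right-multiplying $\Psi^* = \fJ_p \Psi \fJ_p$ by $\fJ_p$) to compute
\begin{align*}
(\Psi \fJ_p)^2 = \Psi (\fJ_p \Psi) \fJ_p = \Psi \Psi^* \fJ_p \fJ_p = I.
\end{align*}
Commutation with $R$ and the identity $\fJ_p' p = p$ follow from the corresponding properties of $\Psi$ and $\fJ_p$.

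I do not anticipate any serious obstacle here; the only step that requires a moment's thought is the verification of $\fJ_p$-symmetry of $\Psi$, where one has to remember that the adjoint of a unitary composition of two involutions is obtained simply by reversing the order.
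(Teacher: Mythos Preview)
Your proposal is correct and follows essentially the same approach as the paper: define $\Psi := \fJ_p' \fJ_p$, verify it is unitary, commutes with $R$, preserves $p$, and is $\fJ_p$-symmetric, then for the converse check that $\Psi\fJ_p$ is an involution via $\Psi\fJ_p = \fJ_p\Psi^*$. In fact you supply more detail than the paper, which dismisses the $\fJ_p$-symmetry verification and several other steps with ``it is easy to see''; your explicit computation $\Psi^* = \Psi^{-1} = \fJ_p\fJ_p' = \fJ_p\Psi\fJ_p$ is exactly the missing justification.
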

\begin{proof}
If $\fJ_p'$ is another conjugation, commuting with $P$ and preserving $p$, then $\fJ_p'= = \Psi 
\fJ_p$, where $\Psi:=\fJ_p'\fJ_p$. It is easy to see that $\Psi$ is a unitary operator, commuting 
with $R$ and preserving $p$. It is also 
easy to see that $\psi$ is $\fJ_p$-symmetric. 

On the other hand, if $\Psi$ is a $\fJ_p$-symmetric unitary operator, commuting with $R$ and such 
that $\Psi p =p$, then the (conjugate-linear) operator $\fJ'_p := \psi \fJ_p$ is a conjugation 
commuting with $R$ and preserving $p$. 

Indeed, the operator $\fJ_p'$ is trivially conjugate-linear, isometric, preserves $p$ and commutes 
with $R$. To show that $\fJ_p'$ is a conjugation, it remains to show that $\fJ_p'$ is an 
involution.  Using the $\fJ_p$-symmetry of 
$\Psi$, we get that $\Psi\fJ_p=\fJ_p\Psi^*$, and so
\[
(\fJ_p')^2 = \Psi \fJ_p \Psi \fJ_p =  \fJ_p \Psi^* \Psi \fJ_p = \fJ_p^2 =I.
\]
%here in the second equality we use the fact that $\Psi$ is $\fJ_p$-symmetric, and so 
%$\Psi\fJ_p=\fJ_p\Psi^*$. 
\end{proof}

\begin{rem}
Since $\Psi$ commutes with $R$ and preserves $p$, it is easy to see that $\Psi\big|_{\cH_0} = 
I|\ci{\cH_0}$.  
\end{rem}

\section{Spectral data for Hankel operators}
\label{s:SD Hankel}
Returning to Hankel operators, we apply Theorem  \ref{t: GPD} to the operators $\Gamma$ and 
$\Gamma_1 =\Gamma S =S\Gamma$ to get the polar decomposition
\begin{align}
\label{e:PolDec01}
\Gamma = \fC \fJ |\Gamma|, \qquad \Gamma_1 = \fC\fJ_1 |\Gamma_1| ,  
\end{align}
where $\fJ$ and $\fJ_1$ are partial conjugations, $\ker \fJ=\ker\Gamma$, $\ker \fJ_1=\ker\Gamma_1$, 
commuting with $|\Gamma|$ and $|\Gamma_1|$ respectively.  

To further investigate conjugations $\fJ$ and $\fJ_1$, let us apply Lemma \ref{l:Jp} to the 
operator $|\Gamma^*|$ and the vector $u=\Gamma^* e_0$. We get a conjugation $\fJ_u$ commuting with 
$|\Gamma|$ (and so with $|\Gamma|^2$), and preserving $u$, $\fJ_u u = u$. The rank one perturbation 
relation \eqref{equation for Hankel} then implies that $\fJ_u$ commutes with $|\Gamma_1|^2$, and 
therefore with $|\Gamma_1|$.

The polar decomposition \eqref{e:PolDec01} then can be rewritten as 
\begin{align}
\label{e:PolDec01a}
\Gamma & = \fC\Phi \fJ_u |\Gamma|, \qquad \Gamma_1 =\fC \Phi_1\fJ_u |\Gamma_1| ,
\intertext{or, equivalently}
\label{e:PolDec01b}
\fC\Gamma & = \Phi \fJ_u |\Gamma|, \qquad \fC\Gamma_1 = \Phi_1\fJ_u |\Gamma_1| ,
\end{align}
where $\Phi= \fJ \fJ_u$, $\Phi_1 = \fJ_1\fJ_u$ are $\fJ_u$-symmetric partial isometries, $\ker 
\Phi=\ker\Gamma$, $\ker \Phi_1= \ker \Gamma_1$. 

The operators $\fJ_u$, $\Phi$, $\Phi_1$ are not unique, but by Lemma \ref{l:equiv of Jp}, if we fix 
on triple $\fJ_u$, $\Phi$, $\Phi_1$, all other triples are given by  $\Psi  \fJ_u$, $\Phi \Psi^*$, 
$\Phi_1\Psi^*$, where $\Psi$ runs over all unitary operators commuting with $|\Gamma|$ and 
preserving 
$u$, $\Psi u =u$. Note that $\ker \Gamma$ is a reducing subspace for $\Psi$, and the operators 
$\fJ_u$, $\Phi$, $\Phi_1$ are zero on $\ker \Gamma$, so only action of $\Psi$ on 
$(\ker\Gamma)^\perp$ matters.

Let us restrict everything to the essential parts. Denote
\begin{align*}
\wt\Gamma &= \Gamma\big|_{(\ker\Gamma)^\perp}, \qquad 
\wt\Gamma_1 = \Gamma_1\big|_{(\ker\Gamma)^\perp}, \qquad
\wt\fJ_u = \fJ_u \big|_{(\ker\Gamma)^\perp}, \\
\wt \Phi &=\Phi\big|_{(\ker\Gamma)^\perp},\qquad
\wt \Phi_1 =\Phi_1\big|_{(\ker\Gamma)^\perp},\qquad
\wt \Psi =\Psi\big|_{(\ker\Gamma)^\perp}.
\end{align*}

One can see that 
\begin{align}
\label{e: rk1 pert 02}
|\wt\Gamma|^2 - |\wt\Gamma_1|^2 = uu^*
\end{align}
Notice that $\ker\wt\Gamma=\{0\}$, so $\fJ_u$ is a conjugation, and $\wt\Phi$ is a unitary 
operator. We should also mention that $\wt \Gamma_1$ can have a one-dimensional kernel, so we can 
only guarantee that $\wt\Phi_1$ is a partial isometry.  

So, given a Hankel operator $\Gamma$ (and therefore the operator $\Gamma_1=\Gamma S=S^*\Gamma$), we 
constructed a triple $\wt \fJ_u$, $\wt\Phi$, $\wt\Phi_1$, such that

\begin{enumerate}
\item $\wt\fJ_u$ is a conjugation commuting with $|\wt\Gamma|$ and preserving $u=\Gamma^*e_0$;
\item $\wt\Phi$ is a $\fJ_u$-symmetric unitary operator commuting with $|\wt\Gamma|$; 
\item $\wt\Phi_1$ is a $\fJ_u$-symmetric partial isometry, $\ker \wt\Phi_1=\ker\wt\Gamma_1$,  
commuting with $|\wt\Gamma_1|$.  
\end{enumerate}
The triple $\wt \fJ_u$, $\wt\Phi$, $\wt\Phi_1$ is generally not unique, but any other such triple 
$\wt \fJ_u'$, $\wt\Phi'$, $\wt\Phi_1'$ is given by 
\begin{align}
\label{e:data equiv rel 01}
\wt \fJ_u' = \wt\Psi \wt \fJ_u, \qquad \wt \Phi' = \wt\Phi \wt\Psi^*, \qquad 
\wt \Phi_1' = \wt\Phi_1 \wt\Psi^*, 
\end{align}

Finally, let us notice that $(\ker\Gamma)^\perp$ is invariant for $\fC\Gamma$ and $\fC\Gamma_1$, 
see Lemma \ref{l:C Gamma invariance} below, so we can restrict the identities \eqref{e:PolDec01b} 
to $(\ker\Gamma)^\perp$, to get 
\begin{align}
\label{e:PolDec04}
\fC\wt\Gamma & = \wt\Phi \wt\fJ_u |\wt\Gamma|, \qquad \fC\wt\Gamma_1 = \wt\Phi_1\wt\fJ_u 
|\wt\Gamma_1 | .
\end{align}

We claim that the tuple $|\wt\Gamma|$, $|\wt\Gamma_1|$, $\wt \fJ_u$, $\wt\Phi$, $\wt\Phi_1$, $u$, 
defined up to unitary equivalence and the equivalence relation \eqref{e:data equiv rel 01} 
completely defnines the Hankel operator $\Gamma$, see the details in the next section. 

\begin{lm}
	\label{l:C Gamma invariance}
	Let $\Gamma$ be a Hankel operator, and $\fC$ be the canonical conjugation on $\ell^2$ given by 
	\eqref{e: canonical conjugation}. 
	
	The subspace $(\ker\Gamma)^\perp$ is an invariant subspace for $\fC\Gamma$ and $\fC\Gamma_1$. 
\end{lm}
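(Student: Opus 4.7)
The strategy is to observe that both $\Gamma$ and $\Gamma_1$ are $\fC$-symmetric (so that $\fC\Gamma=\Gamma^*\fC$ and $\fC\Gamma_1=\Gamma_1^*\fC$), and then that the ranges of $\Gamma^*$ and $\Gamma_1^*$ both sit inside $(\ker\Gamma)^\perp$. Invariance then follows for free, since $\fC\Gamma$ and $\fC\Gamma_1$ will actually map the \emph{entire} space $\ell^2$ into $(\ker\Gamma)^\perp$.

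Concretely, first I would note that $\Gamma_1=\Gamma S$ has matrix $(\gamma_{j+k+1})_{j,k\ge0}$ in the standard basis, which is symmetric, so by the discussion in Subsection \ref{s: conj and CSO} both $\Gamma$ and $\Gamma_1$ are $\fC$-symmetric with respect to the canonical conjugation \eqref{e: canonical conjugation}. Equivalently, $\fC\Gamma\fC=\Gamma^*$ and $\fC\Gamma_1\fC=\Gamma_1^*$, and since $\fC$ is involutive this gives
\begin{align*}
\fC\Gamma=\Gamma^*\fC,\qquad \fC\Gamma_1=\Gamma_1^*\fC.
\end{align*}

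Next, I would use the standard identity $\Ran \Gamma^*\subset(\ker\Gamma)^\perp$ (valid for any bounded operator), which together with the first displayed identity gives
\begin{align*}
\fC\Gamma(\ell^2)=\Gamma^*\fC(\ell^2)=\Ran\Gamma^*\subset(\ker\Gamma)^\perp,
\end{align*}
so certainly $\fC\Gamma\bigl((\ker\Gamma)^\perp\bigr)\subset(\ker\Gamma)^\perp$.

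For $\fC\Gamma_1$ the only mildly nontrivial point is the inclusion $\ker\Gamma\subset\ker\Gamma_1$: this is immediate from $\Gamma_1=S^*\Gamma$, since $\Gamma x=0$ forces $\Gamma_1 x=S^*\Gamma x=0$. Passing to orthogonal complements gives $(\ker\Gamma_1)^\perp\subset(\ker\Gamma)^\perp$, and hence
\begin{align*}
\fC\Gamma_1(\ell^2)=\Gamma_1^*\fC(\ell^2)=\Ran\Gamma_1^*\subset(\ker\Gamma_1)^\perp\subset(\ker\Gamma)^\perp,
\end{align*}
which yields the desired invariance. There is no real obstacle here; the only thing that is not completely automatic is the kernel inclusion $\ker\Gamma\subset\ker\Gamma_1$, and that is a one-line consequence of the Hankel intertwining relation $\Gamma_1=S^*\Gamma$.
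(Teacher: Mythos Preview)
Your proof is correct and takes a genuinely different, more elementary route than the paper. The paper argues via the polar decomposition $\fC\Gamma=\fJ|\Gamma|$ from Theorem~\ref{t: GPD}: since $(\ker\Gamma)^\perp=(\ker|\Gamma|)^\perp$ is trivially $|\Gamma|$-invariant, it reduces to showing $(\ker\Gamma)^\perp$ is $\fJ$-invariant, which it does using that $\fJ$ commutes with $|\Gamma|$ and the identity \eqref{property of conjugation for an inner product}; for $\fC\Gamma_1$ it then goes through the auxiliary conjugation $\fJ_u$ and the relation $\fC\Gamma_1\fJ_u=S^*\fC\Gamma\fJ_u$. You bypass all of this machinery by observing directly that $\fC$-symmetry gives $\fC\Gamma=\Gamma^*\fC$ and $\fC\Gamma_1=\Gamma_1^*\fC$, and then invoking only the standard inclusion $\Ran T^*\subset(\ker T)^\perp$ together with the one-line kernel inclusion $\ker\Gamma\subset\ker\Gamma_1$ from $\Gamma_1=S^*\Gamma$. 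Your argument in fact yields the stronger statement that $\fC\Gamma$ and $\fC\Gamma_1$ map \emph{all} of $\ell^2$ into $(\ker\Gamma)^\perp$, not merely $(\ker\Gamma)^\perp$ itself. The paper's approach has the advantage of staying within the polar-decomposition framework it has just set up, but yours is shorter and self-contained.
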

Note that $(\ker\Gamma)^\perp$ is generally not invariant for $\Gamma$. 

\begin{proof}[Proof of Lemma \ref{l:C Gamma invariance}]
Since $\fC \Gamma = \fJ |\Gamma|$, and $(\ker \Gamma)^\perp = (\ker|\Gamma|)^\perp$ is trivially invariant for $|\Gamma|$, to prove that $(\Ker \Gamma)^{\perp}$ is an invariant subspace for $\fC \Gamma$, it's sufficient to show that $(\Ker \Gamma)^{\perp}$ is an invariant subspace for  $\fJ$.

 Take any $ x \perp \Ker \Gamma$, and any $y \in \Ker \Gamma$. Since $\fJ$ commutes with $|\Gamma|$, we have $\fJ y \in \Ker \Gamma$. Thus 
\begin{align*}
   \langle \fJ x, y \rangle = \langle \fJ y , x \rangle = 0, 
\end{align*}
and $\fJ x \perp \Ker \Gamma$.

As for $\fC \Gamma_1$, since
\begin{align*}
   \fC \Gamma_1 \fJ_u = \Phi_1 |\Gamma_1| = S^* \Phi |\Gamma| =S^* \fC \Gamma \fJ_u
\end{align*}
and $(\Ker \Gamma)^{\perp}$ is an invariant subspace for $\fJ_u$, hence $(\Ker \Gamma)^{\perp}$ is an invariant subspace for $\fC \Gamma_1 \fJ_u$, thus also for $\fC \Gamma_1$.
\end{proof}

\section{Abstract inverse spectral problem for Hankel operators}
\label{s: Abstract ISP}
In this section, we consider the inverse spectral problem for  Hankel operators $\Gamma$ with 
abstract spectral data. 

\subsection{Setup} 
\label{s: Abstract ISP setup}
 Assume that we are given a tuple $(R, R_1, p,\fJ_p,\bvf, \bvf_1)$, where 
\begin{enumerate}
\item $R, R_1\ge 0$,  are self-adjoint operators in $\cH$, $\ker R=\{0\}$, and $p\in\cH$, such 
that 
\[
R^2 -R_1^2 = pp^*.
\]
Note that the above identity implies that $p\in \ran R$ and $\|R^{-1}p\|\le 1$. 

\item $\fJ_p$ is a conjugation commuting with $R$ and $R_1$ and preserving $p$, $\fJ_p p =p$. 

\item $\bvf$ is a $\fJ_p$-symmetric unitary operator commuting with $R$. 

\item $\bvf_1$ is a $\fJ_p$-symmetric partial isometry, $\ker \bvf_1=\ker R_1$, commuting 
with $R_1$; 
note that if $\ker R_1=\{0\}$ then $\bvf_1$ is unitary. 
\end{enumerate}

 Note, that the operator $R_1$ can have a non-trivial (one-dimensional) kernel.
\begin{prop}
\label{r:ker R_1}
 $\ker R_1\ne\{0\}$ is and only if 
\begin{align}
\label{e:kerR_1} 
p \in \ran R^2, \qquad \text{and}\qquad \|R^{-1}p\|=1 .
\end{align}
\end{prop}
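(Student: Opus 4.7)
The whole identity to exploit is $R_1^2 = R^2 - pp^*$, which lets me translate membership in $\ker R_1$ into an algebraic equation involving $R^2$ and the rank-one term $pp^*$. The strategy is to show that (up to scalar) there is essentially only one candidate for a nonzero element of $\ker R_1$, namely $R^{-2}p$, and that it exists in $\cH$ exactly when the two conditions in \eqref{e:kerR_1} hold.

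For the ``only if'' direction, I would start with an arbitrary nonzero $x\in\ker R_1$ and write
\[
0 = R_1^2 x = R^2 x - (p^*x)\, p,
\]
so $R^2 x = (p^*x)\,p$. Since $\ker R=\{0\}$ we also have $\ker R^2=\{0\}$, so $p^*x=0$ would force $x=0$; hence $p^*x\ne 0$. This immediately gives $p = (p^*x)^{-1} R^2 x \in \ran R^2$, and then $x = (p^*x) R^{-2}p$. Applying $p^*$ to the latter identity,
\[
p^*x = (p^*x)\, p^*R^{-2}p = (p^*x)\, \|R^{-1}p\|^2,
\]
and dividing by the nonzero scalar $p^*x$ yields $\|R^{-1}p\|=1$.

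For the ``if'' direction, assume $p\in\ran R^2$ and $\|R^{-1}p\|=1$ (so in particular $p\neq 0$). Set $x := R^{-2}p$; this is a well-defined nonzero vector since $p\neq 0$. Then
\[
R_1^2 x = R^2 x - p\,p^*x = p - p\,\langle R^{-2}p, p\rangle = p - \|R^{-1}p\|^2\, p = 0,
\]
so $Rx\in\ker R_1$; but $R_1$ is self-adjoint and nonnegative, so $\ker R_1 = \ker R_1^2$, giving $x\in\ker R_1$ with $x\neq 0$.

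I do not anticipate a genuine obstacle here: the whole argument is a short rank-one perturbation calculation. The only step that needs a sentence of care is the observation that $p^*x$ cannot vanish for a nonzero $x\in\ker R_1$, which is where the hypothesis $\ker R=\{0\}$ is used.
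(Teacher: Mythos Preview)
Your argument is correct and follows essentially the same rank-one perturbation computation as the paper's proof: both directions identify the unique candidate $x=R^{-2}p$ for a nonzero kernel vector and verify the norm condition via $\langle R^{-2}p,p\rangle=\|R^{-1}p\|^2$. One small slip: after computing $R_1^2 x=0$ you wrote ``so $Rx\in\ker R_1$'' when you meant ``so $x\in\ker R_1^2$''; the conclusion $x\in\ker R_1$ via $\ker R_1=\ker R_1^2$ is then correct.
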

\begin{proof}
Let $R_1x=0$ for $x\ne0$. Then $R_1^2 x=0$, so 
\begin{align}
\label{e:R_1^2 x = pp^*x}
R^2 x = \la x,p\ra p, 
\end{align}
which means that $p=\alpha R^{-2}x$, $\alpha\ne 1$, i.e.~that $p\in \ran R^2$.  
By homogeneity we can assume without loss of generality that $\alpha=1$, and substituting 
$x=R^{-2}p$ into \eqref{e:R_1^2 x = pp^*x} we get that 
\begin{align*}
p= \la R^{-2} p, p \ra p , 
\end{align*}
so $\|R^{-1}p\|^2 = \la R^{-2} p, p \ra =1$. 

Reversing the above reasoning, we see that conditions \eqref{e:kerR_1} imply \eqref{e:R_1^2 x = 
pp^*x} for $x=R^{-1}p$, so $x\in\ker R_1^2 = \ker R_1$. 
\end{proof}

\begin{rem}
\label{varphi 1 is partial unitary}
Note that $\bvf_1|\ci{(\Ker R_1)^{\perp}}$ is unitary, implied by \cond4. In fact, since we 
already have $\bvf_1|\ci{(\Ker R_1)^{\perp}}$ is isometry, it suffices to show that 
$\bvf_1|\ci{(\Ker R_1)^{\perp}}$ is onto. From equation $\bvf_1 \fJ_p = \fJ_p \bvf_1^*$, 
since $\Ran \bvf_1^* = (\Ker R_1)^{\perp}$ and $(\Ker R_1)^{\perp}$ is a reducing subspace for 
$\fJ_p$, we have $(\Ker R_1)^{\perp} \subseteq \Ran (\fJ_p \bvf_1^*)$, hence $(\Ker R_1)^{\perp} 
\subseteq \Ran \bvf_1$.
\end{rem}

We want to know whether we can find a Hankel operator $\Gamma$ and choose a appropriate  
conjugation $\wt\fJ_u$ from the equivalence class defined by \eqref{e:data equiv rel 01}, 
%which commutes with $|\Gamma|, |\Gamma_1|$ and preserving $u=\Gamma^* e_0$ 
such that the tuple $(|\wt \Gamma|,|\wt \Gamma_1|, \wt \fJ_u , \wt \Phi, \wt \Phi_1, u)$ defined in 
equation \eqref{e:PolDec04} is  unitary equivalent to $(R,R_1, \fJ_p, \bvf, 
\bvf_1, p)$, i.e.that
\begin{align}
\label{e:UE01}
|\wt \Gamma| & = \wt \cV R \wt \cV^*, \qquad |\wt \Gamma_1| = \wt \cV R_1 \wt \cV^*; \\
\label{e:UE02}
    \wt \Phi = \wt \cV \bvf \wt \cV^*, \qquad \wt \Phi_1 &= \wt \cV \bvf_1 \wt \cV^*, \qquad 
    \wt \fJ_u = \wt \cV \fJ_p \wt \cV^*, \qquad u=\wt \cV p, 
\end{align}
for some unitary operator $\wt \cV:\cH \to (\ker \Gamma)^\perp$.

We can present a simple necessary condition. Since the canonical conjugation $\fC$ in $\ell^2$, see 
\eqref{e: canonical conjugation} commutes with the backward shift $S^*$, we can rewrite the 
identity $\Gamma_1 = S^*\Gamma$ as 
\begin{align*}
\fC\Gamma_1 &= S^* \fC\Gamma, 
\intertext{and since $(\ker\Gamma)^\perp$ is invarianf for $\fC\Gamma$ and $\fC\Gamma_1$, see Lemma 
\ref{l:C Gamma invariance}, we can 
restrict this identity  to $(\ker\Gamma)^\perp$,}
\fC\wt\Gamma_1 &= S^*\big|_{(\ker\Gamma)^\perp} \fC\wt\Gamma. 
\end{align*}
Using \eqref{e:PolDec04} we can rewrite it as
\begin{align*}
\wt\Phi_1 |\wt\Gamma_1| \wt \fJ_u & = S^*\big|_{(\ker\Gamma)^\perp} \wt\Phi |\wt\Gamma| \wt \fJ_u 
\intertext{or equivalently, factoring out $\wt\fJ_u$,}
\wt\Phi_1 |\wt\Gamma_1|  & = S^*\big|_{(\ker\Gamma)^\perp} \wt\Phi |\wt\Gamma| . 
\end{align*}
Define $\Sigma^*:=\wt\cV S^*\big|_{(\ker\Gamma)^\perp}\wt\cV^*$. We can see from the unitary 
equivalence \eqref{e:UE01}, \eqref{e:UE02} that
\begin{align}
\notag
\bvf_1 R_1 &= \Sigma^* \bvf R, 
\intertext{so}
\label{e:Sigma*01}
\Sigma^* &= \bvf_1 R_1 R^{-1}\bvf^* = R_1\bvf_1 \bvf^* R^{-1} . 
\end{align}

The  justification of the above formula is given by the following well known simple lemma, which we 
present without proof, applied to $A= R$, $B=R_1$ (clearly $R_1^2\le R^2$). This trivially 
justifies the first part of the formula. As for the second part, since $\bvf^*$ is an invertible 
operator, commuting with $R$, we know that $\bvf \ran R= \ran R$, and since $\dom R^{-1}=\ran R$ we 
conclude that $R^{-1}\bvf^* = \bvf^* R^{-1}$ (in the strongest sense, with the equality of 
domains), see Remark \ref{r:commutation} below.

\begin{lm}[Douglas Lemma]
\label{l:Douglas}
Let $A$ and $B$ be bounded operators in a Hilbert space $\cH$ such that 
\[
\|Bx \|\le \| Ax \| \qquad \forall x \in\cH.
\]
\tup{(}or, equivalently, $B^*B\le A^*A$\tup{)}, and let $\ker A=\ker A^* =\{0\}$. 

Then the operator $T=BA^{-1}$ \tup{(}defined initially on a dense set\tup{)} extends to a 
contraction, 
$\|T\|\le 1$ and its adjoint is given by $T^* = (A^*)^{-1}B^*$; notice that the condition $\|T\|\le 
1$ implies that $\ran B^*\in \Dom (A^*)^{-1}$, so the operator $T^*$ is well defined on all 
$x\in\cH$.  
\end{lm}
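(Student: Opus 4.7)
The plan is to proceed in three stages: first extend $T=BA^{-1}$ from the dense set $\ran A$ to a contraction on all of $\cH$, then identify its adjoint by a duality pairing on that dense set, and finally read off the claim that $\ran B^* \subseteq \Dom (A^*)^{-1}$ as a consequence of the adjoint computation.

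For the first stage, note that $\ker A^* = \{0\}$ forces $\ran A$ to be dense in $\cH$. Since $\ker A = \{0\}$, every $y \in \ran A$ has a unique preimage $x$ under $A$, so the formula $Ty := Bx$ is unambiguous, and the hypothesis $\|Bx\| \le \|Ax\|$ immediately yields $\|Ty\| \le \|y\|$. A contractive densely defined operator has a unique contractive extension to all of $\cH$, which is the desired $T$, with $\|T\| \le 1$.

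For the second stage, fix an arbitrary $y \in \cH$ and test $T^* y$ against vectors of the form $Au$, which run over the dense set $\ran A$ as $u$ runs over $\cH$. The chain of equalities
\[
\langle u, A^* T^* y \rangle \;=\; \langle Au, T^* y \rangle \;=\; \langle T(Au), y \rangle \;=\; \langle Bu, y \rangle \;=\; \langle u, B^* y \rangle
\]
holds for every $u \in \cH$, so $A^* T^* y = B^* y$. Since $\ker A^* = \{0\}$, the inverse $(A^*)^{-1}$ is well defined on $\ran A^*$, and this identity simultaneously tells us that $B^* y \in \ran A^* = \Dom (A^*)^{-1}$ and that $T^* y = (A^*)^{-1} B^* y$, which is the claimed description of $T^*$.

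The argument is essentially a direct unraveling of definitions, so I do not anticipate any serious obstacle. The only point that demands care is not presupposing that $(A^*)^{-1}$ is defined on all of $\cH$: surjectivity of $A^*$ is not part of the hypotheses, and in fact the inclusion $\ran B^* \subseteq \Dom (A^*)^{-1}$ is not something to be checked in advance but rather the final output of the calculation, extracted from the identity $A^* T^* y = B^* y$.
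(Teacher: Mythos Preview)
Your proof is correct. The paper itself explicitly states this lemma \emph{without proof} (``We will need the following well known simple lemma, which we present without proof''), so there is no argument in the paper to compare against; your direct verification via the duality pairing $\langle Au, T^*y\rangle = \langle Bu, y\rangle$ is the standard one and is complete as written.
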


\begin{rem}
\label{r:commutation}
If $A$is a bounded operator with $\ker A=\ker A^* =\{0\}$, and $B$ is an invertible operator, 
commuting with $A$, then trivially $B \ran A = B^{-1} \ran A = \ran A$. 

If  $A^{-1}$ denotes the \emph{natural} inverse of $A$, i.e.~the densely defined operator with 
$\dom A^{-1}  = \ran A$, then %$BA^{-1}=A^{-1}B$, 
\begin{align*}
BA^{-1}=A^{-1}B
\end{align*}
 with the natural domains of both $BA^{-1}$ and 
$A^{-1}B$ being $\ran A$. 

In this paper we will often use this commutation relation.
\end{rem}

Recall the following simple definition. 
\begin{defin}
\label{d: asymptoic stability}
An operator $T$ is called \emph{asymptotically stable} if  $ T^n\to 0$  in the strong 
operator 
topology as $n\to \infty$, i.e. if    
\begin{align*}
\lim\limits_{n \to \infty}\|T^{n}x\| \rightarrow 0, \qquad \forall x\in\cH.
\end{align*}
\end{defin}

Since the backward shift $S^*$ is trivially asymptotically stable, so is its restriction to any 
$S^*$-invariant subspace. In particular,   $S^*\big|_{(\ker\Gamma)^\perp}$ is asymptotically 
stable,  so if we have unitary equivalence \eqref{e:UE01}, \eqref{e:UE02}, the operator 
$\Sigma^* = \bvf_1 R_1 R^{-1}\bvf^*$ must be asymptotically stable. 

Turns out that this condition is also sufficient. 
\subsection{Results}% and discussions}
\label{s:AbstrISP-results}
\begin{thm}
\label{t:AbstrISP}
If contraction  $\Sigma^* := \bvf_1 R_1 R^{-1}\bvf^*$  is 
asymptotically stable, then there exists a unique 
Hankel operator $\Gamma$ such that
\begin{align}
\label{e:HankUnitaryRepr 01}
\fC  \Gamma &=  \cV R \bvf \fJ_p  \cV^*, \\
\label{e:HankUnitaryRepr 02}
\fC  \Gamma_1 &=  \cV R_1 \bvf_1 \fJ_p  \cV^*, \\
\label{e:u UnitaryRepr 01}
\Gamma^* e_0 &=  \cV p 
       % (\fJ_{\Gamma} e_0)|\ci{(\Ker \Gamma)^{\perp}} &= \cV q , 
\end{align}
for some isometry $\cV: \cH \to \ell^2$. 
%here, recall  $\fC$ is the 
%standard conjugation on $\ell^2$ defined by \eqref{e: canonical conjugation}.
Furthermore,  $\Ker \Gamma = \{0\}$ if and only if $\| R^{-1}p \| = 1$ and $R^{-1} \notin \Ran R$ 
\textup(recall that $\|R^{-1} p\|\le 1$\textup).
\end{thm}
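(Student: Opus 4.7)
The plan is to construct an isometry $\cV\colon\cH\to\ell^2$ intertwining $\Sigma^*$ with the backward shift $S^*$ via a Sz.-Nagy--Foias-type dilation, and then define $\Gamma$ through \eqref{e:HankUnitaryRepr 01}. A preliminary computation using $R_1^2 = R^2 - pp^*$, $\bvf\bvf^*=I$, the commutation of $\bvf$ with $R$, and the identity $\bvf_1^*\bvf_1 R_1 = R_1$ (since $\ker\bvf_1 = \ker R_1$) gives $I-\Sigma\Sigma^* = ww^*$ with $w := \bvf R^{-1}p$, so $\Sigma^*$ is a contraction with one-dimensional defect spanned by $w$.

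Given asymptotic stability of $\Sigma^*$, I would set $(\cV x)_k := \la x,\Sigma^k w\ra$ for $k\ge0$. The telescoping identity
\[
\sum_{k=0}^{n}|\la x,\Sigma^k w\ra|^2 = \sum_{k=0}^n \la(I-\Sigma\Sigma^*)\Sigma^{*k}x,\Sigma^{*k}x\ra = \|x\|^2 - \|\Sigma^{*(n+1)}x\|^2
\]
shows $\cV$ is an isometry, and $S^*\cV = \cV\Sigma^*$, $\cV^* e_k = \Sigma^k w$ are immediate. Declare $\fC\Gamma := \cV R\bvf\fJ_p\cV^*$ as a conjugate-linear operator on $\ell^2$ and set $\Gamma := \fC\cdot(\fC\Gamma)$. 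The Hankel condition $(\fC\Gamma)S = S^*(\fC\Gamma)$, together with \eqref{e:HankUnitaryRepr 02}, reduces via $\cV^* S = \Sigma\cV^*$ to the abstract identity $R\bvf\fJ_p\Sigma = R_1\bvf_1\fJ_p$ on $\cH$. I would verify this identity by repeatedly invoking the $\fJ_p$-symmetries $\fJ_p\bvf = \bvf^*\fJ_p$ and $\fJ_p\bvf_1^* = \bvf_1\fJ_p$, the commutations of $\fJ_p$ with $R$, $R_1$, and $R^{-1}$, unitarity of $\bvf$, and the inclusion $\ran R_1\subseteq\ran R$ supplied by Douglas's lemma.

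For \eqref{e:u UnitaryRepr 01}, I would compute $\la\Gamma^* e_0,y\ra = \la e_0,\fC(\fC\Gamma)y\ra$, pass to a linear expression using $\fC e_0 = e_0$ and the conjugation identity $\la\fC a,b\ra = \la\fC b,a\ra$, and then simplify using $\bvf^* w = R^{-1}p$, $RR^{-1}p = p$ (because $p\in\ran R$), and $\fJ_p p = p$, arriving at $\Gamma^* e_0 = \cV p$. For uniqueness, any competing isometry $\cV'$ must also satisfy $S^*\cV' = \cV'\Sigma^*$; since the defect of $\Sigma^*$ is one-dimensional, uniqueness of the Sz.-Nagy dilation yields $\cV' = \alpha\cV$ for some $|\alpha|=1$. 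The conjugate-linearity of $\fJ_p$ then rescales $\fC\Gamma$ by $\alpha^2$ (so $\Gamma^* e_0$ by $\alpha^2$) while $\cV p$ is rescaled only by $\alpha$, so matching $\Gamma^* e_0 = \cV p$ forces $\alpha = 1$.

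For the second part, $\ker\Gamma = (\ran\cV)^\perp$, so $\ker\Gamma = \{0\}$ if and only if $\cV$ is unitary, i.e.\ if and only if $\{\cV^* e_k\} = \{\Sigma^k w\}_{k\ge 0}$ is an orthonormal basis of $\cH$; completeness is automatic from $\ker\cV = \{0\}$. Using $\Sigma\Sigma^* = I - ww^*$, the assumption $\|w\|=1$ gives $\Sigma\Sigma^*w=0$, so $\Sigma^*w\in\ker\Sigma$; if additionally $\ker\Sigma = \ker R_1 = \{0\}$, then $\Sigma^* w = 0$, and a short induction produces $\Sigma^*\Sigma^k w = \Sigma^{k-1}w$ for $k\ge 1$, giving orthonormality. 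Conversely, an orthonormal basis forces $\Sigma$ to act as a forward shift on $\{\Sigma^k w\}$, hence to be an isometry with $\ker\Sigma = \{0\}$. By Proposition \ref{r:ker R_1}, the pair of conditions $\|R^{-1}p\|=1$ together with $\ker R_1 = \{0\}$ is exactly (a) $\|R^{-1}p\|=1$ and (b) $R^{-1}p\notin\ran R$. The main obstacle throughout is the algebraic identity $R\bvf\fJ_p\Sigma = R_1\bvf_1\fJ_p$, which requires careful tracking of the $\fJ_p$-symmetries, the commutation relations, and proper handling of the densely defined $R^{-1}$ via Douglas's lemma.
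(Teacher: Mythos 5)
Your existence construction coincides with the paper's: the defect identity $I-\Sigma\Sigma^*=ww^*$ with $w=\bvf R^{-1}p$ (the paper's $q$), the isometry $(\cV x)_k=\la x,\Sigma^k w\ra$ obtained from asymptotic stability by telescoping, the intertwining $S^*\cV=\cV\Sigma^*$, and the reduction of the Hankel property together with \eqref{e:HankUnitaryRepr 02} to the single identity $R\bvf\fJ_p\Sigma=R_1\bvf_1\fJ_p$; the paper carries out that verification by introducing $\wh\Sigma:=\bvf^*R^{-1}R_1\bvf_1$ and checking $R\bvf\wh\Sigma=R_1\bvf_1$ and $\wh\Sigma\fJ_p=\fJ_p\Sigma$, which is exactly the computation you outline. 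You genuinely diverge on uniqueness and on the kernel condition, and both of your routes are sound. For uniqueness the paper derives the coefficient formula $\gamma_k=\la q,(\Sigma^*)^kp\ra$ (Proposition \ref{p:HankCoeff}), which depends only on the spectral data; you instead prove rigidity of the intertwining isometry and use the conjugate-linearity of $\fJ_p$ to fix the unimodular constant via \eqref{e:u UnitaryRepr 01} --- shorter for uniqueness alone, but it does not produce the coefficient formula the paper wants later. One caveat: ``uniqueness of the Sz.-Nagy dilation'' is not the right citation for $\cV'=\alpha\cV$, since minimality of $\ell^2$ as a dilation space is unclear when $\ran\cV\ne\ell^2$; the correct justification is direct: any isometry $\cV'$ with $S^*\cV'=\cV'\Sigma^*$ satisfies $(\cV'^*e_0)(\cV'^*e_0)^*=\cV'^*(I-SS^*)\cV'=I-\Sigma\Sigma^*=ww^*$, so $\cV'^*e_0=\alpha w$, and $\cV'$ is determined by $\cV'^*e_0$ through $\la\cV'x,e_k\ra=\la(\Sigma^*)^kx,\cV'^*e_0\ra$. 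For the kernel condition the paper argues directly (showing $\Sigma^*\Sigma$ is a projection, and deriving a contradiction from $q\in\ran R$), while you route everything through $\ker\Sigma=\ker R_1$ and Proposition \ref{r:ker R_1}; this works --- your induction $\Sigma^*\Sigma^kw=\Sigma^{k-1}w$ needs only $\Sigma\Sigma^*=I-ww^*$, $\|w\|=1$ and $\ker\Sigma=\{0\}$, because $\la\Sigma^{k+1}w,w\ra=\la\Sigma^kw,\Sigma^*w\ra=0$ --- but note that the step $\ker\Sigma=\ker R_1$ itself relies on $\ran\bvf_1^*=(\ker R_1)^\perp$, i.e., on Remark \ref{varphi 1 is partial unitary}, which you should make explicit.
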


Since $\ker R=\{0\}$, we can conclude that $\ran\cV=(\ker\Gamma)^\perp$ (so $\cV$ is unitary if 
$\ker\Gamma=\{0\}$). If we denote by $\wt\cV$ the operator $\cV$ with the target space restricted 
to $\ran \cV = (\ker\Gamma)^\perp$, we can rewrite identities \eqref{e:HankUnitaryRepr 
01}, \eqref{e:HankUnitaryRepr 02}, \eqref{e:u UnitaryRepr 01} as unitary equivalences
\begin{align}
\label{generalization Gamma equation}
\fC \wt \Gamma &= \wt \cV R \bvf \fJ_p \wt \cV^*, \\
\label{generalization Gamma 1 equation}
\fC \wt \Gamma_1 &= \wt \cV R_1 \bvf_1 \fJ_p \wt \cV^*, \\
\label{generalization u equation}
\Gamma^* e_0 &= \wt \cV p .
   % (\fJ_{\Gamma} e_0)|\ci{(\Ker \Gamma)^{\perp}} &= \cV q , 
\end{align}

\begin{rem}
It can be seen from the proof, that the isometry $\cV$ is unique. 

It is also not hard to see that conditions \eqref{e:HankUnitaryRepr 01}, \eqref{e:HankUnitaryRepr 
02} alone guarantee uniqueness of the Hankel operator $\Gamma$, and that in this case, multiplying 
$\cV$ by an appropriate unimodular constant we can get the condition \eqref{e:u UnitaryRepr 01}. 
\end{rem}

The above Theorem \ref{t:AbstrISP} looks like a different (albeit  a more natural) statement  from 
the unitary equivalences \eqref{e:UE01}, \eqref{e:UE02} that were promised in the beginning of this 
subsection. The following simple proposition shows that they are in fact equivalent.

\begin{prop}
\label{p:tuple unitary equivalence}
The identities  \eqref{generalization Gamma equation}, \eqref{generalization Gamma 1 equation}, and 
\eqref{generalization u equation} are equivalent to the unitary equivalence of the tuples $R,R_1, p,
\fJ_p, \bvf, \bvf_1$ and $|\wt 
\Gamma|,|\wt \Gamma_1|,  u, \wt \fJ_u , \wt \Phi, \wt \Phi_1$ \textup(for an appropriate choice of 
$\fJ_u$ in the equivalence class given by \eqref{e:data equiv rel 01}\textup), i.e.~to the 
identities \eqref{e:UE01}, \eqref{e:UE02}. 

In other words, there exists a 
conjugation $\fJ_u$ from the equivalence class given by \eqref{e:data equiv rel 01}, such that the 
identities \eqref{e:UE01}, \eqref{e:UE02} hold. 
\end{prop}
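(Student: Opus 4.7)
The proposition asserts an equivalence, and both directions reduce to a direct computation using that $R$ commutes with $\bvf$ and $\fJ_p$ (and $R_1$ with $\bvf_1$, $\fJ_p$), together with uniqueness of polar decomposition for the conjugate-linear operator $\fC\wt\Gamma$.

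For the direction $(\ref{e:UE01})\text{--}(\ref{e:UE02}) \Rightarrow (\ref{generalization Gamma equation})\text{--}(\ref{generalization u equation})$, I would substitute the unitary equivalences directly into the polar decomposition \eqref{e:PolDec04}:
\begin{align*}
\fC\wt\Gamma = \wt\Phi\,\wt\fJ_u\,|\wt\Gamma|
= (\wt\cV \bvf \wt\cV^*)(\wt\cV \fJ_p \wt\cV^*)(\wt\cV R \wt\cV^*)
= \wt\cV \bvf \fJ_p R \wt\cV^*
= \wt\cV R \bvf \fJ_p \wt\cV^*,
\end{align*}
which is \eqref{generalization Gamma equation}; the $\Gamma_1$ case is identical, and \eqref{generalization u equation} is the last identity in \eqref{e:UE02} combined with $u = \Gamma^*e_0$.

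For the converse, I would make the explicit choice $\wt\fJ_u := \wt\cV \fJ_p \wt\cV^*$ and propose the candidates $\wt\Phi := \wt\cV \bvf \wt\cV^*$, $\wt\Phi_1 := \wt\cV \bvf_1 \wt\cV^*$. A short check (involutivity from $\wt\cV^*\wt\cV = I$, commutation with $\wt\cV R \wt\cV^*$, and preservation of $u = \wt\cV p$) verifies that $\wt\fJ_u$ is a bona fide conjugation belonging to the equivalence class \eqref{e:data equiv rel 01}, so it is a legitimate choice for the partial conjugation appearing in \eqref{e:PolDec04}. Using commutativity, I would then rewrite \eqref{generalization Gamma equation} as
\begin{align*}
\fC\wt\Gamma = \wt\cV R \bvf \fJ_p \wt\cV^* = (\wt\cV \bvf \fJ_p \wt\cV^*)(\wt\cV R \wt\cV^*),
\end{align*}
factoring $\fC\wt\Gamma$ into a conjugate-linear unitary times a non-negative self-adjoint operator. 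Since $\|\fC\wt\Gamma x\|=\|\wt\Gamma x\|=\||\wt\Gamma|x\|$, uniqueness of the modulus forces $|\wt\Gamma|=\wt\cV R\wt\cV^*$. Feeding this back into \eqref{e:PolDec04} with the chosen $\wt\fJ_u$ gives $\wt\Phi\wt\cV \fJ_p R=\wt\cV\bvf\fJ_p R$; since $\ker R=\{0\}$ makes $\ran R$ dense in $\cH$ and $\fJ_p(\ran R)=\ran R$ (as $\fJ_p$ commutes with $R$), continuity yields $\wt\Phi\wt\cV=\wt\cV\bvf$ on all of $\cH$, i.e.\ $\wt\Phi=\wt\cV\bvf\wt\cV^*$. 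The $\Gamma_1$ computation is the same, with one extra step: here $\ran R_1$ is only dense in $(\ker R_1)^\perp$, but both $\wt\Phi_1$ and $\wt\cV\bvf_1\wt\cV^*$ vanish on $\wt\cV\ker R_1=\ker\wt\Gamma_1$, so the identity extends to all of $\cH$.

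The only delicate point is ensuring that our chosen $\wt\fJ_u=\wt\cV\fJ_p\wt\cV^*$ really lies in the equivalence class \eqref{e:data equiv rel 01} of Lemma \ref{l:equiv of Jp}; the freedom granted by that equivalence is precisely what is needed to align the abstract conjugation $\fJ_p$ with the concrete $\wt\fJ_u$ coming from the polar decomposition of $\Gamma$, and this is what the proposition's concluding sentence makes explicit.
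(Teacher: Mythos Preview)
Your proposal is correct and follows essentially the same approach as the paper: the forward direction is direct substitution into \eqref{e:PolDec04}, and for the converse you define $\wt\fJ_u:=\wt\cV\fJ_p\wt\cV^*$, establish $|\wt\Gamma|=\wt\cV R\wt\cV^*$ via the norm characterization of the modulus, and then identify $\wt\Phi$, $\wt\Phi_1$. The only minor difference is that the paper simply \emph{defines} $\wt\Phi:=\wt\cV\bvf\wt\cV^*$, $\wt\Phi_1:=\wt\cV\bvf_1\wt\cV^*$ and checks directly that this triple satisfies \eqref{e:PolDec04}, whereas you take the (equally valid) route of fixing $\wt\fJ_u$ first, noting that this pins down $\wt\Phi$ uniquely, and then using density of $\fJ_p(\ran R)$ to match it with the candidate---a slightly longer but perfectly sound verification.
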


\begin{proof}[Proof of Proposition \ref{p:tuple unitary equivalence}]
Substituting \eqref{e:UE01} and \eqref{e:UE02} into \eqref{e:PolDec04} we immediately get 
\eqref{generalization Gamma equation}, \eqref{generalization Gamma 1 equation}, and 
\eqref{generalization u equation}, so we only need to prove the other direction.
 
 Assuming \eqref{generalization Gamma equation}, \eqref{generalization Gamma 1 equation} and 
 \eqref{generalization u equation}, let us first show that 
\begin{align*}
    |\wt \Gamma| = \wt \cV R \wt \cV^*, \qquad |\wt \Gamma_1| = \wt \cV R_1 \wt \cV^* .
\end{align*}

The modulus $|A|$ of an operator $A$ is the unique self-adjoint non-negative operator such that 
\begin{align*}
\|Ax\|=\|\, |A|x \, \|\qquad \forall x\in\cH. 
\end{align*}
The operator $\wt \cV R \wt \cV^*$ is clearly self-adjoint and non-negative, so, since $\bvf$, 
$\fC$, and $\fJ_p$ are isometries,   we can write
\begin{align*}
\|\wt\Gamma x\| = \|\fC\wt \Gamma x\| = \|\wt\cV \bvf \fJ_p R \wt \cV^* x\| =\| R\wt\cV^* x\| = 
\| \wt\cV R\wt\cV^* x\|;
\end{align*}
in the second equality we also used the fact that  $\bvf$, $\fJ_p$ commute with $R$.

%We first show that $(\wt \Gamma)^* = \wt \cV \fJ_p \bvf^* R (\wt \cV)^*\fC$. Indeed, 
%%We firstly derive a representation for $\wt \Gamma^*$ from \eqref{generalization Gamma equation}. 
%for any $x,y \in \cH$, using \eqref{property of conjugation for an inner product} % we have
%\begin{align*}
%    \langle \wt \Gamma x ,y \rangle &= \langle \fC \wt \cV R \bvf \fJ_p \wt \cV^* x , y \rangle 
%    = \langle \fC y , \wt \cV R \bvf \fJ_p \wt \cV^* x \rangle \\
%    &= \langle \bvf^* R \wt \cV^* \fC y,  \fJ_p \wt \cV^* x \rangle = \langle \wt \cV^* x , 
%    \fJ_p \bvf^* R \wt \cV^* \fC y \rangle \\
%    &= \langle x ,\wt \cV^* \fJ_p \bvf^* R \wt \cV^* \fC y \rangle
%\end{align*}
%Thus $\wt \Gamma^* = \wt \cV^* \fJ_p \bvf^* R \wt \cV^* \fC$, and we have
%\begin{align*}
%    |\wt \Gamma| &= \bigg( (\wt \Gamma)^* \wt \Gamma \bigg)^{1/2} \\
%    &= \bigg( \wt \cV \fJ_p \bvf^* R^2 \bvf \fJ_p (\wt \cV)^* \bigg)^{1/2}\\
%    &= \bigg( \wt \cV R^2  \wt \cV^* \bigg)^{1/2} = \wt \cV R \wt \cV^* .
%\end{align*}
Similarly we can get $|\wt \Gamma_1| = \wt \cV R_1 \wt \cV^*$: 
The fact that $\bvf_1$ is only a 
partial isometry does not spoil anything; since  $\bvf_1$ commutes with $R_1$ and 
$\Ker\bvf_1=\Ker R_1$, we have 
\[
\| \bvf_1 \fJ_p R_1 \wt\cV^*x\| = \| \bvf_1  R_1 \fJ_p \wt\cV^*x\| = \|   R_1 \fJ_p 
\wt\cV^*x\| = \|  \fJ_p R_1  \wt\cV^*x\| = \| R_1  \wt\cV^*x\|, 
\]
and the rest of the computations 
follows exactly as for the case of $|\Gamma|$.

Next we define a conjugation $\wt \fJ_u$ on $(\Ker \Gamma)^{\perp}$ by $\wt \fJ_u := \wt \cV \fJ_p 
\wt \cV^*$. Easy to see that $\wt \fJ_u$ commutes with $|\wt \Gamma| = \wt \cV R \wt \cV^* , |\wt 
\Gamma_1| = \wt \cV R_1 \wt \cV^*$ and preserves $\wt \cV p$. Now we extend $\wt \fJ_u$ to a 
conjugation $\fJ_u$   defining on the whole space $\cH$. This can be done by following the process 
stated in Remark \ref{r: partial conjugation can be extended to a conjugation} (we can set $\wt 
\fJ_u'$ to be an arbitrary partial conjugation with support $\Ker \Gamma$, and then let $\fJ_u = 
\wt \fJ_u + \wt \fJ_u'$, thus we have $\fJ_u$ commutes with $|\Gamma|, |\Gamma_1|$). 

Define $\wt\phi:= \wt \cV \bvf \wt \cV^*$, $\wt\phi_1:= \wt \cV \bvf_1 \wt \cV^*$. Clearly 
$\wt\phi$ is a unitary operator commuting with $|\wt\Gamma|$, and $\wt\phi_1$ is a partial 
isometry, $\Ker\wt\phi_1 =\Ker\wt\Gamma_1$ commuting with $|\Gamma_1|$. 
Now we can rewrite equations \eqref{generalization Gamma equation}, \eqref{generalization Gamma 1 
equation}  as
\begin{align*}
    \wt \Gamma = \fC (\wt \cV R \wt \cV^*)(\wt \cV \bvf \wt \cV^* ) (\wt \cV \fJ_p \wt \cV^*) 
    &= \fC |\wt \Gamma| \wt \phi\, \wt \fJ_u , \\
\wt \Gamma_1 = \fC (\wt \cV R_1 \wt \cV^*)(\wt \cV \bvf_1 \wt \cV^* ) (\wt \cV \fJ_p \wt \cV^*) 
    &= \fC |\wt \Gamma| \wt\phi_1 \wt \fJ_u ,
\end{align*}
which are exactly 
identities \eqref{e:PolDec04} (for the particular choice of $\wt\fJ_u$, $\wt\phi$, $\wt\phi_1$). 

Finally, let us notice that \eqref{generalization u equation} is just the identity $u=\wt\cV p$. 
\end{proof}

Define
\begin{align}
\label{e:q def}
q & := \bvf  R^{-1}p = R^{-1}\bvf p , \qquad \wh q  := \fJ_p q= \bvf^* R^{-1}p = R^{-1}\bvf^*p \\
\label{e:wh Sigma*}
\wh\Sigma^*&:= \fJ_p \Sigma^* \fJ_p = \bvf_1^* R_1 R^{-1} \bvf.
\end{align}
\begin{prop}
\label{p:HankCoeff}
The coefficients $\{\gamma_k\}_{k=0}^\infty$ of the Hankel operator 
$\Gamma=(\gamma_{j+k})_{j,k=0}^\infty$ from Theorem \ref{t:AbstrISP} are given by 
\begin{align}
\label{e:HankCoeff}
\gamma_k &= \la  q,  (\Sigma^*)^k p \ra\ci\cH = \la (\wh \Sigma^*)^k p,  q \ra\ci\cH.  
\end{align}
\end{prop}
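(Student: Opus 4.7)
The plan is to translate the matrix entry $\gamma_k = \langle \Gamma e_k, e_0\rangle_{\ell^2}$ from an identity on $\ell^2$ to an inner product on $\cH$ via the intertwining isometry $\cV$ of Theorem~\ref{t:AbstrISP}. Using that the matrix of $\Gamma$ is symmetric in the form $\gamma_k = \langle e_k, \Gamma^* e_0\rangle = \langle e_k, u\rangle$, and then $e_k = S^k e_0$, I reduce to
\begin{align*}
\gamma_k = \langle e_0, (S^*)^k u\rangle_{\ell^2},
\end{align*}
so everything comes down to (i) identifying $\cV^* e_0$ and (ii) pushing $(S^*)^k$ through $\cV$.

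For (i), applying \eqref{e:HankUnitaryRepr 01} to $e_0$ and noting $\fC e_0 = e_0$ gives $\Gamma^* e_0 = \cV R \bvf \fJ_p \cV^* e_0$; combining with $\Gamma^* e_0 = u = \cV p$ and canceling $\cV$ via $\cV^* \cV = I$, one gets $p = R \bvf \fJ_p \cV^* e_0$. Solving then yields $\fJ_p \cV^* e_0 = \bvf^* R^{-1} p = \wh q$, and so $\cV^* e_0 = \fJ_p \wh q = q$ (using $\fJ_p^2 = I$ and $\fJ_p p = p$).

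For (ii), note that $u = \cV p$ lies in $(\ker\Gamma)^\perp$ and this subspace is $S^*$-invariant by Lemma~\ref{l:C Gamma invariance}; so the defining intertwining $\Sigma^* = \wt\cV\, S^*\big|_{(\ker\Gamma)^\perp} \wt\cV^*$ gives inductively $(S^*)^k u = \cV(\Sigma^*)^k p$. Plugging in,
\begin{align*}
\gamma_k = \langle e_0, \cV(\Sigma^*)^k p\rangle_{\ell^2} = \langle \cV^* e_0, (\Sigma^*)^k p\rangle_\cH = \langle q, (\Sigma^*)^k p\rangle_\cH,
\end{align*}
which is the first equality in \eqref{e:HankCoeff}.

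The second equality is a purely algebraic consequence of the conjugation symmetry: since $\fJ_p p = p$, induction on $k$ using $\fJ_p \Sigma^* = \wh\Sigma^* \fJ_p$ gives $\fJ_p (\Sigma^*)^k p = (\wh\Sigma^*)^k p$, and then property \eqref{property of conjugation for an inner product} applied to $\langle q, (\Sigma^*)^k p\rangle = \langle \fJ_p \wh q, (\Sigma^*)^k p\rangle$ swaps slots to put $(\wh\Sigma^*)^k p$ on the left, producing the right-hand side of \eqref{e:HankCoeff}. The main subtlety lies in step (i)—the identification $\cV^* e_0 = q$—which is the only point where the $\fJ_p$-symmetry of $\bvf$ and the normalization $\fJ_p p = p$ enter essentially; the rest is a direct transcription between the action of the shift on $\ell^2$ and of $\Sigma^*$ on $\cH$.
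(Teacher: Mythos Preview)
Your proof is correct and in fact streamlines the paper's own argument at the key step. Both proofs start from $\gamma_k=\langle e_0,(S^*)^k u\rangle$ and translate to $\cH$, but the paper explicitly flags that one cannot simply quote $\cV^*e_0=q$ from the existence construction (since for uniqueness $\cV$ is an arbitrary isometry satisfying \eqref{e:HankUnitaryRepr 01}--\eqref{e:u UnitaryRepr 01}), and then takes a detour through Proposition~\ref{p:tuple unitary equivalence} and the polar decomposition $\Gamma^*=\fJ_u|\Gamma|\Phi^*\fC$ to compute $\wt\cV^*v=\bvf R^{-1}p$. Your observation that applying \eqref{e:HankUnitaryRepr 01} directly to $e_0$ and combining with \eqref{e:u UnitaryRepr 01} yields $p=R\bvf\fJ_p\cV^*e_0$, hence $\cV^*e_0=q$, bypasses that detour entirely and works for any $\cV$ satisfying the hypotheses. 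This is a genuine simplification.

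Two small points of imprecision. First, in step (ii) you refer to ``the defining intertwining $\Sigma^*=\wt\cV\,S^*\big|_{(\ker\Gamma)^\perp}\wt\cV^*$'', but $\Sigma^*$ is \emph{defined} as $\bvf_1 R_1 R^{-1}\bvf^*$; the intertwining $S^*\cV=\cV\Sigma^*$ must be derived from \eqref{e:HankUnitaryRepr 01}, \eqref{e:HankUnitaryRepr 02} (via $\fC\Gamma_1=S^*\fC\Gamma$, which gives $\cV R_1\bvf_1\fJ_p\cV^*=S^*\cV R\bvf\fJ_p\cV^*$, hence $\cV\Sigma^*=S^*\cV$ on the dense set $\ran R$). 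The paper obtains this through Proposition~\ref{p:tuple unitary equivalence}; your direct derivation is cleaner but should be made explicit. Second, Lemma~\ref{l:C Gamma invariance} concerns $\fC\Gamma$-invariance, not $S^*$-invariance; the $S^*$-invariance of $(\ker\Gamma)^\perp$ follows instead from $\Gamma S=S^*\Gamma$ (so $\ker\Gamma$ is $S$-invariant). Finally, note that both your computation and the paper's proof of the second equality land on $\langle(\wh\Sigma^*)^k p,\wh q\rangle$ rather than $\langle(\wh\Sigma^*)^k p,q\rangle$; the $q$ in the statement appears to be a typo for $\wh q$.
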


\subsection{Some discussions} As it was already discussed above, the \emph{abstract spectral data} 
is the tuple $R,R_1, \fJ_p, \bvf, \bvf_1, p$, described in the beginning of Section 
\ref{s: Abstract ISP setup}, modulo unitary equivalence of the tuples.  It is also natural to 
define the %triple $\fJ_p$, $\bvf$, $\bvf_1$  
abstract spectral data modulo  equivalence relations
\begin{align}
\label{e:EquivRel ASD}
\fJ_p' = \bpsi \fJ_p, \qquad \bvf'= \bvf \bpsi^*, \qquad \bvf_1'=\bvf_1 \bpsi^*, 
\end{align}
between triples $\fJ_p$, $\bvf$, $\bvf_1$ and  $\fJ_p'$, $\bvf'$, $\bvf_1'$; here $\bpsi$ is a 
unitary, $\fJ_p$-symmetric operator,  commuting with $R$ and preserving $p$, $\bpsi p =p$. 
%The unitary equivalence of tuples then means the unitary equivalence of the corresponding 
%equivalence classes. 

The results presented in the above subsection, imply that the map from a Hankel operator to the  
abstract spectral data (interpreted modulo unitary equivalence and equivalence relations 
\eqref{e:EquivRel ASD})  is injective. 

It is trivially not surjective, because it is quite easy to construct an abstract spectral data 
such that the operator $\Sigma^*$ is not asymptotically stable. What is more non-trivial, it was 
shown in \cite{GPT-22} that even in the case when there no trivial obstacles to the asymptotic 
stability (in \cite{GPT-22} the case of $p$ being cyclic for $R$ was considered), there exists 
abstract spectral data, which does not appear from a Hankel operator. 

The conjugation $\fJ_p$ in the abstract spectral data looks a bit mysterious, especially in light 
of Proposition \ref{p:HankCoeff}, where $\fJ_p$ is absent from the formulas for $\gamma_k$ (while 
$\fJ_p$ appear in the relations between $q$ and $\wh q$, and between $\Sigma^*$ and $\wh\Sigma^*$, 
the formulas for all these object do not involve $\fJ_p$).

\subsection{Proofs}
The whole proof consists of three different parts: existence, uniqueness and the trivial kernel 
 condition.
 
 \subsubsection{Existence of the Hankel operator \tp{$\Gamma$}{}}
\label{s:exists Gamma}
Recalling that $\Sigma^* = \bvf_1 R_1 R^{-1}\bvf^*$ we can write 
\begin{align*}
I-\Sigma\Sigma^* & = \bvf( I - R^{-1}R_1^2 R^{-1}) \bvf^* = \bvf R^{-1}( R^2 -  R_1^2  )R^{-1} 
\bvf^*
\\
&= \bvf R^{-1} p p^* R^{-1} \bvf^* = q q^*, 
\end{align*}
where $q:= \bvf R^{-1}p$ (recall that $\|R^{-1} p\|\le1$). 

Applying both sides of the identity 
\begin{align}
\label{e:defectSigma*}
I-\Sigma\Sigma^* = qq^*, 
\end{align}
to a vector $x\in\cH$ and taking the inner product with $x$, we get 
\begin{align*}
\|x\|^2-\| \Sigma^* x\|^2 = |\la x,q \ra\ci\cH|^2 .
\end{align*}
Replacing $x$ in the above identity by $(\Sigma^*)^k x$, $k=1, 2, \ldots, n-1$, summing the results 
and telescoping, we get that 
\begin{align*}
\|x\|^2 = \|(\Sigma^*)^n x\|^2 = \sum_{k=0}^{n-1} \| (\Sigma^*)^k x\|^2. 
\end{align*}
Letting $n\to \infty$ and using the asymptotic stability of $\Sigma^*$, we conclude that 
\begin{align}
\|x\|^2 = \sum_{k=0}^{\infty} \| (\Sigma^*)^k x\|^2, 
\end{align}
so the operator $\cV:\cH\to \ell^2$, 
\begin{align}
\label{e:cV def}
\cV x := \left( \la x, q\ra, \la \Sigma^* x, q\ra, \la (\Sigma^*)^2 x, q\ra, \ldots  \right) = 
\left( \la (\Sigma^*)^k x, q\ra \right)_{k=0}^\infty, 
\end{align}
is an isometry. The definition \eqref{e:cV def} of $\cV$ immediately implies that $\cV$ intertwines 
$S^*$ and $\Sigma^*$, 
\begin{align}
\label{e:Sigma* S* intertwine}
S^*\cV = \cV \Sigma^* .
\end{align}
%%

%Now denote $\tilde{\cV}$ as the operator $\cV$ with the restricted target space on $\Ran \cV$; 
%then 
%the operator $\tilde{\cV}: \cH \to \Ran \cV$ is unitary. Trivially \eqref{isometry equation 2 of 
%x} 
%implies that  $\Ran \cV$ is $S^*$-invariant, so we can define $\wt S^*:\ran\cV \to \ran\cV$ as 
%$\wt 
%S^* := S^*|\ci{\Ran \cV}$. 
%Denote by $\wt S$ the adjoint of $\wt S^*$,  $\wt S := (\wt S^*)^* = P\ci{\Ran \cV} S|\ci{\Ran 
%\cV}$. Then the identity \eqref{e:S*V=VT*} implies that 
%\begin{align}
%    \label{unitary relation between S star and T star}
%    \wt S^* = \wt \cV \cT \wt \cV^*\qquad \text{and} \qquad \wt S = \wt \cV \cT^* \wt \cV^*.
%\end{align}

Now we define the  operators $\Gamma$ and $\Gamma_1$ as 
\begin{align}
\label{e:construct Gamma Gamma1}
\Gamma := \fC \cV R \bvf \fJ_p \cV^*  \qquad \Gamma_1 := \fC \cV R_1 \bvf_1 \fJ_p \cV^*.
\end{align}

We will show that $\Gamma$ is a Hankel operator by proving that $\Gamma S = \Gamma_1 =S^* \Gamma$.

To show that $\Gamma S = \Gamma_1$,  recall that $\Sigma = \bvf  R^{-1} R_1 \bvf_1^*$  and 
define $\wh\Sigma:= \bvf^* R^{-1} R_1 \bvf_1$ (both operators are 
well defined, see Lemma \ref{l:Douglas}).  It is easy to see that  
that $\wh\Sigma \fJ_p = \fJ_p \Sigma$; together with identity $\cT^* \cV^*=\cV^* S$ (which is just 
the adjoint of \eqref{e:Sigma* S* intertwine})  it gives us
\begin{align*}
    \Gamma_1 &= \fC \cV R_1 \bvf_1 \fJ_p \cV^* = \fC \cV R \bvf \wh\Sigma \fJ_p \cV^* \\
    &= \fC \cV R \bvf \fJ_p \Sigma \cV^* = \fC \cV R \bvf \fJ_p \cV^* S \\
    &= \Gamma S .
\end{align*}
As for the identity $\Gamma_1 =S^*\Gamma$, recalling that $\Sigma^* = R_1 \bvf_1 R^{-1} 
\bvf^*$ 
and using \eqref{e:Sigma* S* intertwine}, we have
\begin{align*}
    \fC \Gamma_1 &= \cV R_1 \bvf_1 \fJ_p \cV^* = \cV \Sigma^* R \bvf \fJ_p \cV^* \\
    &= S^* \cV R \bvf \fJ_p \cV^* = S^* \fC \Gamma .
\end{align*}
Since $\fC$ commutes with $S^*$, we see that $\fC\Gamma_1 = \fC S^*\Gamma$, and left multiplying 
this identity by $\fC$ we get the desired result. 

Thus we know $\Gamma$ is a Hankel operator. Identities  
\eqref{e:HankUnitaryRepr 01}, \eqref{e:HankUnitaryRepr 02} are trivial; we get them by applying 
$\fC$ to both sides of identities \eqref{e:construct Gamma Gamma1}.

Let us prove  \eqref{e:u UnitaryRepr 01}. Since $\Gamma$ is $\fC$-symmetric, we get from 
\eqref{e:construct Gamma Gamma1}
\begin{align*}
\Gamma^*  = \fC \Gamma \fC = \cV R \bvf \fJ_p \cV^* \fC . 
\end{align*}
It is easy to see that $\cV^*e_0=q$. Since $\fC e_0 =e_0$, we get 
\begin{align*}
\Gamma^* e_0 = \cV R \bvf \fJ_p \cV^* e_0 = \cV R \bvf \fJ_p q = \cV R \fJ_p \bvf^*q   = \cV  \fJ_p 
R 
\bvf^* q = \cV \fJ_p p =\cV p; 
\end{align*}
here in the third equality we used the fact that $\bvf$ is $\fJ_p$-symmetric, in the next one that 
$\fJ_p$ commutes with $R$, and in the last one that $\fJ_p p =p$. \hfill\qed

Note, that if only conditions \eqref{e:HankUnitaryRepr 01}, \eqref{e:HankUnitaryRepr 02} are 
satisfied, we get from rank one perturbation relations $|\Gamma|^2-|\Gamma_1|^2 = uu^*$, 
$R^2-R_1^2=pp^*$ that 
\begin{align*}
uu^* = \cV pp^*\cV^*, 
\end{align*}
which happens if and only if $\cV p =\alpha p$, $|\alpha|=1$. Requiring that $\cV p=p$ we pick the 
unique operator $\cV$. As it was shown above in the proof, this operator is given by \eqref{e:cV 
def}.

\subsubsection{Uniqueness of the Hankel operator \tp{$\Gamma$}{} and formula for the coefficients.}
We know that Hankel coefficients $\gamma_k$ are given by
\begin{align*}
\gamma_k = \la\Gamma e_k , e_0\ra = \la e_k , \Gamma^* e_0\ra = \la S^k e_0 , u\ra = \la  e_0 , 
(S^*)^k u\ra .
% = \la P\ci{(\ker\Gamma)^\perp} e_0 , (S^*)^k u\ra ;
\end{align*}
Since $u\in (\ker\Gamma)^\perp$ and $(\ker\Gamma)^\perp$ is an 
$S^*$-invariant subspace of $\ell^2$, we can replace $S^*$ by $S^*\big|_{(\ker\Gamma)^\perp}$ and 
$e_0$ by  $v:=P\ci{(\ker\Gamma)^\perp} e_0$, so 
\begin{align*}
\gamma_k = \bigl\la v , (S^*\big|_{(\ker\Gamma)^\perp})^k u \bigr\ra .
\end{align*}
%%
%Since $\wt\cV$ is a unitary operator and
%%%
%\begin{align*}
%\wt\cV^*  P\ci{(\ker\Gamma)^\perp} e_0 = \cV^* e_0 = q
%\end{align*}
%%%
We know that the conditions \eqref{e:HankUnitaryRepr 01}, \eqref{e:HankUnitaryRepr 02}, and 
\eqref{e:u UnitaryRepr 01} imply the unitary equivalences \eqref{e:UE01}, \eqref{e:UE01}, see 
Proposition \ref{p:tuple unitary equivalence}. These unitary equivalences and the fact that 
$S^*\big|_{(\ker\Gamma)^\perp} = \Phi_1 |\Gamma_1| |\Gamma|^{-1} \Phi^*$  imply that 
$\Sigma^*%=\bvf_1R_1R^{-1}\bvf^*
$ from Theorem \ref{t:AbstrISP} satisfies
\begin{align*}
S^*\big|_{(\ker\Gamma)^\perp} = \wt\cV \Sigma^* \wt\cV^*, 
\end{align*}
so 
\begin{align*}
\gamma_k = \bigl\la v , \wt\cV (\Sigma^*)^k \wt\cV^* u \bigr\ra 
= \bigl\la \wt\cV^* v ,  (\Sigma^*)^k \wt\cV^* u \bigr\ra 
= \bigl\la \wt\cV^* v ,  (\Sigma^*)^k p \bigr\ra .
\end{align*}
Now it looks like that to prove the first part of \eqref{e:HankCoeff} it is sufficient to recall 
that $\wt \cV^* v =\cV^* e_0 =q$, which was already proved in the above Subsection \ref{s:exists 
Gamma}. But unfortunately, this was proved only for the operator $\cV$ given by \eqref{e:cV def}, 
but here we cannot assume that $\cV$ is given by this formula. Thus a more complicated reasoning is 
necessary. 

Since $\Gamma = \fC|\Gamma| \Phi \fJ_u $, and $\Gamma$ is $\fC$-symmetric
\begin{align*}
\Gamma^* =\fC\Gamma\fC = |\Gamma| \Phi \fJ_u \fC = |\Gamma|  \fJ_u \Phi^*  \fC 
= \fJ_u |\Gamma|   \Phi^*  \fC, 
\end{align*}
and so 
\begin{align*}
u=\Gamma^*e_0 = \fJ_u|\Gamma|   \Phi^*  \fC e_0 = \fJ_u |\Gamma|   \Phi^*   e_0. 
\end{align*}
Since $\fJ_u u =u$, we conclude that $u= |\Gamma|   \Phi^*   e_0$.

The operators $\Phi^*$,  $|\Gamma|$ are $0$ on $\Ker\Gamma$, so restricting everything 
to $(\ker\Gamma)^\perp$ we get 
\begin{align*}
u =  |\wt\Gamma|   \wt\Phi^* v = \wt\Phi^* |\wt\Gamma|    v
\end{align*}
so $v=  \wt\Phi|\wt\Gamma|^{-1} u$. The unitary equivalences \eqref{e:UE01}, \eqref{e:UE02} imply 
then that $\wt\cV^* v =\bvf R^{-1}p =:q $, and the first part of \eqref{e:HankCoeff} is proved. 

As for the second part, 
\begin{align*}
\la q, (\Sigma^*)^k p \ra = \la \fJ_p (\Sigma^*)^k p , \fJ_p q\ra 
=\la \fJ_p (\Sigma^*)^k \fJ_p p , \fJ_p q\ra = \la  (\wh\Sigma^*)^k  p , \wh q\ra .
\end{align*}

Since the formula for $\gamma_k$ involves only the abstract spectral data $R$, $R_1$, $\bvf$, 
$\bvf_1$, the Hankel operator $\Gamma$ is unique. 
\hfill\qed

\subsubsection{The trivial kernel condition}
\label{s:TrivKer}
It is easy to see that  $\Ker \Gamma = \{ 0 \}$ if and only if $\cV: \cH \to \ell^2$ defined by  
\eqref{e:cV def} is a unitary operator, i.e.~if and only if  $\Ran \cV = \ell^2$. 
%And this happens if and only if $\Sigma^*$ is unitary equivalent to $S^*$.
      
So, if $\Ker \Gamma = \{ 0 \}$, then $\Sigma^*$ is unitarily equivalent to the backward shift 
$S^*$, and comparing identities
\begin{align*}
     I-SS^* = e_0 e_0^* , \qquad I-\Sigma\Sigma^* = qq^*,
\end{align*}
where, recall $q= \bvf R^{-1}p$, we conclude that $\|R^{-1}p\|=\| q \|  = 1 $. We can also see that 
the identity $I-\Sigma\Sigma^* = qq^*$ implies that $\Sigma^*q=0$. 

Also in this case, since $\Gamma_1 =\Gamma S$, we can conclude that $\ker\Gamma_1=\{0\}$; thus 
$\ker R_1=\{0\}$, and therefore $\bvf_1$ is a unitary operator.

Now, assuming $q \in \Ran R$, let us get   a contradiction. 
      
Let $q=Rx$, $x\in\cH$. Define $f :=\bvf_1 \bvf^*   x$, thus $x =  \bvf 
\bvf_1^* f$ and $ q = R \bvf \bvf_1^* f$, Hence
\begin{align*}
    \Sigma^* R \bvf \bvf_1^* f = \Sigma^*  q = 0.
\end{align*}
But on the other hand we have
\begin{align*}
     \Sigma^* R \bvf \bvf_1^* =\bvf_1 R_1 R^{-1} R \bvf^* \bvf  \bvf_1^*   = R_1 = \bvf_1 
     \bvf^* 
     R \Sigma ,
\end{align*}
where the last equality follows from $R_1$ being a self-adjoint operator. Hence  
\begin{align*}
    \bvf_1 \bvf^* R \Sigma f = 0 , %\qquad ,
\end{align*}
so $\Sigma f = 0$ (because the operators $R$, $\bvf^*$, $\bvf$ have trivial kernels), which 
contradicts to the fact that $\Sigma$ is an isometry. Hence $q \notin \Ran 
R$.

Now we prove the sufficiency part. Suppose $\| q \| = 1$ and $q \notin \Ran R$. 

Let us first show that 
$\Ker R_1 = \{ 0 \}$.
Let $ R_1 x = 0 $ for a $x\ne0$.  Applying to $x$ the identity
\begin{align*}
   R_1^2 = R \big( I- \bvf^* q (\bvf^* q)^* \big) R,
\end{align*}
we get that 
%together with $\Ker R = \{ 0 \}$, we have
\begin{align*}
   R x = \langle R x,\bvf^* q \rangle \bvf^* q;
\end{align*}
note that $Rx\ne 0$ because $R$ has trivial kernel. 
This implies $R  x  =  \alpha \bvf^* q$, $|\alpha| = 1$, so
\begin{align*}
q=\alpha^{-1} \bvf Rx = \alpha^{-1}R\bvf x, 
\end{align*}
which contradicts  the assumption $q \notin \Ran R$. So, indeed, $\ker R_1 =\{0\}$. Note that in 
this case $\bvf_1$ is a unitary operator. 

From the definition of $\Sigma^*$ we get that $\Sigma = \bvf R^{-1} R_1 \bvf_1^*$, is well defined 
for all $x\in\cH$, see Lemma \ref{l:Douglas}. Since all operators have trivial kernels, we conclude 
that $\ker\Sigma=\{0\}$.

Applying both sides of the identity \eqref{e:defectSigma*} to $q$, we get that $\Sigma\Sigma^* 
q=0$, so $\Sigma^*q=0$.

Now left and right multiplying \eqref{e:defectSigma*} by $\Sigma $ and $\Sigma^*$ respectively, we 
get
\begin{align*}
\Sigma^* \Sigma - \Sigma^* \Sigma \Sigma^* \Sigma   = \Sigma^* \big( qq^*\big) \Sigma 
= \Sigma^*q (\Sigma^* q)^*= 0,
\end{align*}
hence $\Sigma^* \Sigma$ is a projection. Furthermore, since $\Ker \Sigma = \{ 0 \}$, we have 
$\Sigma^* \Sigma = I$, and so $\Sigma$ is an isometry.

Since $\Sigma$ is an isometry, $\|q\|=1$,  and $\Sigma^* q=0$, it is an easy exercise to show that 
$\{ \Sigma^k q\}_{k=0}^\infty$ is an orthonormal system. Rewriting decomposition \eqref{e:cV def} 
for $\cV$ as 
\begin{align*}
\cV x = \left( \la  x, \Sigma^k q\ra \right)_{k=0}^\infty \in\ell^2, 
\end{align*}
we immediately see that $\cV$ is surjective. It was already proved that $\cV$ is an isometry, so 
$\cV$ is unitary. 
\hfill\qed

\subsection{Some remarks}

\section{Concrete representations of the abstract spectral data for simple cases}
\label{s:Concr Repr of ASD}
The abstract spectral data treated in Section \ref{s: Abstract ISP}  looks like some completely 
non-tractable abstract nonsense. However in many interesting situations the abstract spectral data 
can be represented using very concrete and understandable objects. 

\subsection{The case of \tp{$p$}{p} being cyclic for \tp{$R$}{R}} If $p$ is a cyclic vector for 
$R$, the pair $R$, $p$ is completely determined by the spectral measure $\rho=\rho\ci{R,p}$ of the 
operator $R$ with respect to the vector $p$. 

In this case the pair $R$, $p$ can be represented as (unitarily equivalent to)  the multiplication 
operator $M$, $M f(s) =sf(s)$ in $\cH=L^2(\rho)$ and the vector $p= \1\in L^2(\rho)$.  Under these 
assumptions there is a unique conjugation $\fJ_p$, commuting with $R$ and preserving $p$, namely 
$\fJ_p f = \bar f$. 

The unitary operator $\bvf$ is given by the multiplication by an appropriate unimodular function, 
which we  will be denoting  by $\vf$ (without boldface). Thus we could write $\bvf=\vf(R)$. 

Note, that the condition $\|R^{-1}p\| \le 1$ is translated to 
\begin{align}
\label{e:normalization of rho}
\int_\R \frac{\dd\rho(s)}{s^2} \le 1
\end{align}

It is easy to see that if $p$ is cyclic for $R$ (and so for $R^2$), it is also cyclic for $R_1^2 
=R^2-pp^*$, and so for $R_1$. The spectral measure $\rho^{[1]} = \rho\ci{R_1,p}$ is uniquely 
determined from $\rho$, and the partial isometry  $\bvf_1$ is defined by a function $\vf_1$, 
$\vf_1(s)|= 1$ $\rho^{[1]}$-a.e.~on $(0,\infty)$, and $\vf_1(0)=0$, and we can write $\bvf_1 = 
\vf_1(R_1)$.

Thus, the abstract spectral data in this case is given by
\begin{enumerate}
\item A  compactly supported measure $\rho$ on $(0, \infty)$ satisfying \eqref{e:normalization of 
rho} (and the  measure $\rho^{[1]}$ derived from it); 

\item Two unimodular functions $\vf$ and $\vf_1$ on $(0,\infty)$, defined $\rho$-a.e.\ and 
$\rho^{[1]}$-a.e.\ respectively. 
\end{enumerate}
The conjugation $\fJ_p$ is implicit in this case. 

This case was studied in details in \cite{GPT-22}, where the asymtotic stability of the operator 
$\Sigma^*$ was investigated. There is no obvious obstacles for the asymptotic stability of 
$\Sigma^*$ in this situation, and it was conjectured for some time, that in this case the operator 
$\Sigma^*$ is always asymptotically stable. However, it was shown in \cite{GPT-22} that this is not 
true; some counterexamples as well as some sufficient conditions for the asymptotic stability were 
presented there, but describing these result is out of the scope of the current paper.  

We will only mention that the conjugate linear Hankel operator $H_u$, studied in \cite{GPT-22} 
coincides with our operator $\fC\Gamma = \Gamma^*\fC$, treated as the operator in the Hardy space 
$H^2$. The symbol $u$ in \cite{GPT-22} was in our terms just the function $u(z) = \sum_{k\ge0} 
\bar\gamma_k z^k$, which is just our vector $u=\Gamma^* e_0$ transferred to $H^2$ via 
$z$-transform. Our functions $\vf$, $\vf_1$ correspond to the functions $\Psi^*$ and $\wt\Psi$ from 
\cite{GPT-22}.

\subsection{The case of a compact operator \tp{$R$}{R} with cyclic \tp{$p$}{p}}
\label{case of compact operator with cyclic vector}
%%

%\subsubsection{The case of simple spectrum}
In this case the setup can simplified even more. The spectral measures $\rho$ and $\rho^{[1]}$ in 
this case can be written down as 
\begin{align}
\label{e:repr rho compact case}
\rho = \sum_{k\ge1} w_k \delta_{\lambda_k}, \qquad \rho^{[1]} = \sum_{k\ge1} w_k^{[1]} 
\delta_{\mu_k}
\end{align}
where $\lambda_1>\lambda_2>\ldots> \lambda_k> \ldots> 0$ are eigenvalues of $R$, 
$\mu_1>\mu_2>\ldots> \mu_k> \ldots\ge 0$  are eigenvalues of $R_1$ and $w_k = 
\rho(\{\lambda_k\})$, $w_k^{[1]} = \rho^{[1]}(\{\mu_k\})$. Recall, see Proposition \ref{r:ker R_1}, 
that the operator $R_1$ can have nontrivial kernel. In the infinite-dimensional case we denote this 
zero eigenvalue by $\mu_\infty$, so $\mu_k>0$ for $k\in\N$; if $\dim\cH=n$, then $\mu_n\ge0$ 
%(with equality being possible), 
and $\lambda_n>0$.

  Note that the eigenvalues $\lambda_k$, $\mu_k$ satisfy the intertwining 
relations 
\begin{align}
\label{e:Intertwine lambda mu}
\lambda_1>\mu_1>\lambda_2>\mu_2 \ldots> \lambda_k> \mu_k > \ldots\ge 0, 
\end{align}
and $\lambda_k$ is either a finite sequence $\lambda_1, \lambda_1, \ldots, \lambda_n$ (in this case 
$\mu_n$ can be $0$), or $\lambda_k \searrow 0$. 

The intertwining relation \eqref{e:Intertwine lambda mu} can be easily obtained from the minimax 
characterization of the eigenvalues, or via standard formulas for rank one perturbations relating 
Weil's $m$-functions, see Remark \ref{r:intertwine}. We leave details for the reader. 

The only new twist in the compact case is that the weights $w_k$ (and therefore $w^{[1]}_k$) can be 
determined from the eigenvalues $\lambda_k$ of $R$ and non-zero eigenvalues $\mu_k$ of $R_1$. The 
key to that is the following \emph{abstract Borg's theorem}. 

\begin{thm}[Abstract Borg's Theorem]
\label{t:Borg 01}
Given two sequences $\{\lambda_k\}_{k\ge 1}$ and $\{\mu_k\}_{k\ge 1}$ satisfying intertwining 
relations \eqref{e:Intertwine lambda mu} and such that $\lambda_k^2\to 0$ as $k\to \infty$, there 
exists a unique \tup{(}up to unitary equivalence\tup{)} triple $(W,W_1,p)$, such that 
\begin{enumerate}
\item  $W=W^*\ge 0$, $\Ker W=\{0\}$ is a 
compact operator with simple eigenvalues $\{ \lambda_k^2\}_{k=1}^{\infty}$;
\item $p\in\cH$ and $W_1 = W-pp^*$ is a compact operator with non-zero eigenvalues $\{ \mu_{k}^2 
\}_{k=1}^{\infty}$  
\tup{(}$W_1$ can 
also have a simple eigenvalue at $0$, and it is not hard to show that all the eigenvalues are 
simple\tup{)}.
\end{enumerate}
Moreover, $\|W^{-1/2} p\|=1$ if and only if 
\begin{align}
\label{e: norm q = 1 02}
\prod_{k\ge1}\frac{\mu_k^2}{\lambda_k^2}=0,\quad \text{ or, equivalently, } \quad \sum_{k\ge1} 
\Bigg( \frac{\lambda_{k}^{2}}{\mu_{k}^{2}} -1  \Bigg) = \infty; 
\end{align}
in addition, if \eqref{e: norm q = 1 02} holds, then  $\|W^{-1} p\|=\infty$ if and only if 
\begin{align}
\label{e: q not in ran R 02}
\sum_{k\ge1}^\infty \left( \frac{\mu_k^2}{\lambda_{k+1}^2} -1 \right) =\infty .
\end{align}
\end{thm}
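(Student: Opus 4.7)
The plan is to realize the triple $(W,W_1,p)$ concretely as the multiplication operator on a discrete $L^2$-space and then to reduce the two norm conditions to the asymptotics of an explicit infinite product.

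\textbf{Uniqueness/cyclicity and the product formula.} I would first check that $p$ must be cyclic for $W$: if some eigenvector $e_k$ of $W$ satisfied $\langle p, e_k\rangle = 0$, then $W_1 e_k = We_k - p\langle e_k, p\rangle = \lambda_k^2 e_k$, so $\lambda_k^2$ would also be an eigenvalue of $W_1$, contradicting the strict interlacing \eqref{e:Intertwine lambda mu}. Cyclicity lets me represent $(W,p)$ as $(M_s,\mathbf{1})$ on $\cH=L^2(\rho)$ with $\rho=\sum_k w_k\delta_{\lambda_k^2}$, so both existence and uniqueness reduce to determining the weights $w_k$. Introducing the Borel transforms $m(z)=\sum_k w_k/(\lambda_k^2-z)$ and $m_1(z)=\sum_k w_k^{[1]}/(\mu_k^2-z)$, the Sherman--Morrison identity applied to $W=W_1+pp^*$ gives
\begin{align*}
1-m(z)=\frac{1}{1+m_1(z)}.
\end{align*}
The function $1+m_1$ is a Herglotz function whose poles are the $\mu_k^2$ and whose zeros are the $\lambda_k^2$ (since an eigenvalue of $W=W_1+pp^*$ solves $m_1(z)=-1$), with limit $1$ at infinity, so I obtain
\begin{align*}
1-m(z)=\prod_{k\geq 1}\frac{\mu_k^2-z}{\lambda_k^2-z}.
\end{align*}
Reading residues at $\lambda_k^2$ recovers the positive weights $w_k=(\lambda_k^2-\mu_k^2)\prod_{j\neq k}(\mu_j^2-\lambda_k^2)/(\lambda_j^2-\lambda_k^2)$. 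For existence I reverse the construction: define $\rho$ by these weights and verify that $W_1=W-pp^*$ has the prescribed spectrum.

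\textbf{The first norm condition.} Setting $z=0$ in the product formula gives $\|W^{-1/2}p\|^2=m(0)=1-\prod_k\mu_k^2/\lambda_k^2$, so $\|W^{-1/2}p\|=1$ is equivalent to the vanishing of this product, which by the elementary comparison $\log(1+x)\leftrightarrow x$ for $x\geq 0$ is equivalent to $\sum_k(\lambda_k^2/\mu_k^2-1)=\infty$.

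\textbf{The second norm condition.} Assume $m(0)=1$. Then $\|W^{-1}p\|^2=m'(0)=\lim_{\epsilon\to 0^+}P(-\epsilon)/\epsilon$, where $P(z):=\prod_k(\mu_k^2-z)/(\lambda_k^2-z)$. The key trick is a telescoping: writing
\begin{align*}
\frac{\mu_k^2+\epsilon}{\lambda_k^2+\epsilon}=\frac{\mu_k^2+\epsilon}{\lambda_{k+1}^2+\epsilon}\cdot\frac{\lambda_{k+1}^2+\epsilon}{\lambda_k^2+\epsilon}
\end{align*}
and using $\lambda_k^2\to 0$ to collapse the second infinite product to $\epsilon/(\lambda_1^2+\epsilon)$ yields
\begin{align*}
\frac{P(-\epsilon)}{\epsilon}=\frac{1}{\lambda_1^2+\epsilon}\prod_{k\geq 1}\frac{\mu_k^2+\epsilon}{\lambda_{k+1}^2+\epsilon}.
\end{align*}
Each factor on the right is $\geq 1$ and monotone in $\epsilon$, so monotone convergence gives $\|W^{-1}p\|^2=\lambda_1^{-2}\prod_k\mu_k^2/\lambda_{k+1}^2$, which is infinite precisely when $\sum_k(\mu_k^2/\lambda_{k+1}^2-1)=\infty$.

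The main technical hurdle is the rigorous justification of the product formula in infinite dimensions: convergence of the infinite product, its identification with $1-m(z)$, and handling the boundary case in which $W_1$ has a zero eigenvalue (i.e.\ when $m(0)=1$ and $m_1$ has a pole at the origin). I would handle this by first proving the formula for finite-rank truncations via Cramer's rule applied to the characteristic polynomials of $W$ and $W_1$, then passing to the limit using Weyl's inequality $\sum_k(\lambda_k^2-\mu_k^2)\leq\|p\|^2<\infty$ to control uniform convergence on compact subsets of $\C\setminus[0,\lambda_1^2]$.
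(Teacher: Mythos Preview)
Your proposal is correct and follows the same overall architecture as the paper: reduce to the multiplication model via cyclicity, identify $1-m(z)$ with the infinite product $\Phi(z)=\prod_k(\mu_k^2-z)/(\lambda_k^2-z)$, read off the weights as residues, and evaluate at $z=0$ for the first norm condition. Your justification of the product identity via truncation and Weyl-type summability is a reasonable alternative to what the paper does; the paper instead proves directly that any meromorphic function with the right poles, zeros, limit at infinity, and Herglotz sign must coincide with $\Phi$, using a Picard/Casorati--Weierstrass argument on the ratio.

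The one place where your route is genuinely different, and in fact more elementary, is the second norm condition. The paper substitutes $z=-\lambda_N^2$ into the identity $-z^{-1}\Phi(z)=\sum_k a_k/(\lambda_k^2(\lambda_k^2-z))$, lets $N\to\infty$, and then has to compare $\prod_k(\lambda_N^2+\mu_k^2)/(\lambda_N^2+\lambda_k^2)$ with $\prod_k \mu_k^2/\lambda_{k+1}^2$; this comparison requires a separate lemma using Harnack's inequality on the auxiliary function $G_N$. Your telescoping
\[
\frac{P(-\epsilon)}{\epsilon}=\frac{1}{\lambda_1^2+\epsilon}\prod_{k\ge1}\frac{\mu_k^2+\epsilon}{\lambda_{k+1}^2+\epsilon}
\]
together with the monotonicity of each factor as $\epsilon\searrow0$ gives $\|W^{-1}p\|^2=\lambda_1^{-2}\prod_k\mu_k^2/\lambda_{k+1}^2$ directly, bypassing Harnack entirely. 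That is a cleaner argument.
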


Applying this theorem, from given sequences $\{\lambda_k\}_{k\ge 1}$, $\{\mu_k\}_{k\ge 1}$ we 
get the (unique up to unitary equivalence) triple $W$, $W_1$, $p$, and defining $R=W^{1/2}$, 
$R_1=W_1^{1/2}$ (non-negative square roots) we get the unique triple $R$, $R_1$, $p$. 

Thus, the spectral data in this case is given by sequences $\{\lambda_k\}_{k\ge 1}$, 
$\{\mu_k\}_{k\ge 1}$ of eigenvalues of $R$ and $R_1$ and the sequences $\{\xi_k\}_{k\ge 1}$, 
$\{\eta_k\}_{k\ge 1}$, $|\xi_k|=|\eta_k|=1$ representing the functions $\vf$ and $\vf_1$, 
$\xi_k=\vf(\lambda_k)$, $\eta_k=\vf_1(\mu_k)$. 

It will be shown below, see Section \ref{asymptotic stability}, that the operator $\Sigma^*$ in the compact case is 
always asymptotically stable, so for any abstract spectral data there exist a unique Hankel 
operator. 

Condition $\|W^{-1/2}p\|=1$ is equivalent to the condition $\|R^{-1}p\|=1$, and the condition  
$\|W^{-1}p\|=\infty$ means $R^{-1}p\notin \ran R$. Therefore, for the resulting Hankel operator 
$\Gamma$, $\ker \Gamma=\{0\}$ if and only if conditions \eqref{e: norm q = 1 02} and \eqref{e: q 
not in ran R 02} hold. 

\begin{rem}
\label{r:AbstrBorg finite} As it will be seen from the proof, the abstract Borg's theorem also 
holds  (and is trivial)
for finite rank operators, i.e.~for finite sequences $\{\lambda_k\}_{k=1}^n$, $\{\mu_k\}_{k=1}^n$. 
In this case 
\begin{align*}
\lambda_1>\mu_1>\lambda_2>\mu_2>\ldots \lambda_1>\mu_n\ge 0, 
\end{align*}
and $\{\lambda_k\}_{k=1}^n$, $\{\mu_k\}_{k=1}^n$ are exactly the eigenvalues of $W$ and $W_1$. 

Condition \eqref{e: norm q = 1 02} holds in this case if and only if $\mu_n=0$, and the condition 
\eqref{e: q not in ran R 02} is never satisfied. This agrees with the simple fact that a finite 
rank Hankel operator always have a non-trivial kernel.  
\end{rem}

\section{The abstract spectral data for general compact operators} 
\label{s:ASD compact}
Consider now the case when $R$ 
is a general 
compact operator. Let $\cH_0:= \cspn\{R^n p: n\ge0 \}$, and let $\{\lambda_k\}_{k\ge1}$ be the 
eigenvalues of $R\big|_{\cH_0}$ taken in decreasing order, and let $\{\mu_k\}_{k\ge1}$ be the 
\emph{non-zero} eigenvalues of $R_1\big|_{\cH_0}$, also taken in the decreasing order (note that 
$R_1\big|_{\cH_0}$ can also have a simple eigenvalue at $0$). Trivially the eigenvalues $\lambda_k$ 
and $\mu_k$ satisfy the intertwining relations \eqref{e:Intertwine lambda mu}. 

The spectral measures $\rho$ of $R\big|_{\cH_0}$ and $\rho^{[1]}$ of $R_1\big|_{\cH_0}$ can be 
represented as in the case of simple spectra by formulas \eqref{e:repr rho compact case}. It 
follows from the Abstract Borg's Theorem (Theorem \ref{t:Borg 01}) that the measures $\rho$ and 
$\rho^{[1]}$ can be reconstructed from the non-zero eigenvalues $ \{ \lambda_k \}_{k \ge 1}$, $\{ \mu_k \}_{k \ge 1}$. 

So, the sequences $\{\lambda_k\}_{k\ge1}$, $\{\mu_k\}_{k\ge1}$  give us a part of the spectral 
data. 

Let us analyze how to get the full spectral data. Define
\begin{align}
\label{e:K K_1}
\cK & :=\cspn\{ \ker (R -\lambda_k I) : k\ge 1 \} \ominus \cH_0, \\ 
\label{e:K_1 alt 01}
\cK_1 & :=\cspn\{ \ker (R_1 -\mu_k I) : k\ge 1 \} \ominus \cH_0
% (\cH_0 \oplus \cK)^\perp . 
\end{align}

Trivially $\cH_0$ is an invariant subspace for both $R^2$ and $R_1^2 = R^2-pp^*$, and so for both
$R$ and $R_1$. Since
\begin{align*}
\cK =\cspn\{ \ker (R -\lambda_k I) : k\ge 1 \} \cap \cH_0^\perp, 
\end{align*}
we see that $\cK$ is invariant for $R$, as the intersection of invariant subspaces. Finally, since 
$R$ and $R_1$ coincide on $\cH_0^\perp$, we conclude that $\cK$ is also invariant for $R_1$. 

Similarly, we can conclude that $\cK_1$ is invariant for both $R_1$ and $R$. 

Trivially $\cK\perp\cK_1$. 

\begin{lm}
\label{l:spectrum R}
Let $R$, $R_1$, $p$, $\fJ_p$, $\bvf$, $\bvf_1$ be an abstract spectral data such  that the operator 
$\Sigma^* 
= \bvf_1 R_1 R^{-1} 
\bvf^* =  R_1 \bvf_1 \bvf^* R^{-1} $ is asymptotically stable. 
Then %$\cH= \cH_0\oplus \cK \oplus\cK_1$. 
\begin{align*}
\cH= \cH_0\oplus \cK \oplus\cK_1, 
\end{align*}
or, equivalently, $\lambda_k$, $\mu_k$ are the only possible eigenvalues of $R$. 
\end{lm}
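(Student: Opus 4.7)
The plan is to show that every positive eigenvalue of $R\big|_{\cH_0^\perp}$ must lie in $\{\lambda_k\}\cup\{\mu_k\}$; combined with compactness of $R$ and $\ker R=\{0\}$ this gives $\cH_0^\perp=\cK\oplus\cK_1$.

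First I would record two structural facts. Since $p\in\cH_0$, the identity $R_1^2=R^2-pp^*$ shows that $\cH_0$ reduces both $R^2$ and $R_1^2$, hence both $R$ and $R_1$, and the rank-one perturbation $pp^*$ vanishes on $\cH_0^\perp$, so $R=R_1$ on $\cH_0^\perp$. The intertwining \eqref{e:Intertwine lambda mu} gives $\lambda_k\ne\mu_j$ for all $k,j$, so since $\cK$ and $\cK_1$ are spanned by eigenvectors of $R=R_1$ on $\cH_0^\perp$ at disjoint eigenvalue sets, they are mutually orthogonal and both contained in $\cH_0^\perp$. Thus the direct sum $\cH_0\oplus\cK\oplus\cK_1$ makes sense, and equals $\cH$ iff every positive eigenvalue $\nu$ of $R\big|_{\cH_0^\perp}$ belongs to $\{\lambda_k\}\cup\{\mu_k\}$.

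Suppose for contradiction $\nu>0$ is an eigenvalue of $R\big|_{\cH_0^\perp}$ with $\nu\notin\{\lambda_k\}\cup\{\mu_k\}$, and choose a unit $x$ with $Rx=\nu x$. Since $\nu\ne\lambda_k$, the $\nu$-eigenspace $E_\nu:=\ker(R-\nu I)$ is disjoint from $\cH_0$ (where $R$ has eigenvalues $\{\lambda_k\}$), so $E_\nu\subset\cH_0^\perp$; since $\bvf$ and $\bvf^*$ commute with $R$, they preserve $E_\nu$. On $\cH_0^\perp$ the operators $R$ and $R_1$ coincide, so $E_\nu\subset\ker(R_1-\nu I)$; and since $\nu\ne\mu_k$ and $\nu>0$, the full eigenspace $\ker(R_1-\nu I)$ is disjoint from $\cH_0$ (where the nonzero eigenvalues of $R_1$ are $\{\mu_k\}$), so $\ker(R_1-\nu I)\subset\cH_0^\perp$ and equals $E_\nu$. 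As $E_\nu\subset(\ker R_1)^\perp$, the partial isometry $\bvf_1$ acts as a unitary on the finite-dimensional space $E_\nu$.

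For $y\in E_\nu$ one computes
\begin{align*}
\Sigma^*y=\bvf_1 R_1R^{-1}\bvf^*y = \bvf_1 R_1(\nu^{-1}\bvf^*y) = \bvf_1\bvf^*y\in E_\nu,
\end{align*}
where the middle step uses $\bvf^*y\in E_\nu\subset\cH_0^\perp$ together with $R_1=R=\nu I$ on $E_\nu$. Hence $E_\nu$ is $\Sigma^*$-invariant and $\Sigma^*\big|_{E_\nu}=\bvf_1\bvf^*\big|_{E_\nu}$ is unitary on a nontrivial finite-dimensional subspace, so $\|\Sigma^{*n}x\|=\|x\|=1$ for every $n$, contradicting the asymptotic stability of $\Sigma^*$. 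Thus no such $\nu$ exists, and the spectral decomposition of the compact operator $R\big|_{\cH_0^\perp}$ yields $\cH_0^\perp=\cK\oplus\cK_1$. The main obstacle is the careful bookkeeping showing that $E_\nu$ is simultaneously invariant under $\bvf$ and $\bvf_1$; this relies crucially on excluding $\nu$ from both eigenvalue lists, which forces the corresponding eigenspaces of $R$ and $R_1$ to sit inside $\cH_0^\perp$ and to coincide there.
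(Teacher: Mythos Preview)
Your proof is correct and follows essentially the same argument as the paper: assume there is an eigenvalue $\nu$ of $R$ outside $\{\lambda_k\}\cup\{\mu_k\}$, show the corresponding eigenspace lies in $\cH_0^\perp$, is preserved by both $\bvf$ and $\bvf_1$, and that $\Sigma^*$ restricts there to the unitary $\bvf_1\bvf^*$, contradicting asymptotic stability. Your write-up is somewhat more detailed (you spell out why $E_\nu\subset\cH_0^\perp$ and the explicit computation of $\Sigma^*$ on $E_\nu$), but the strategy is identical to the paper's.
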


\begin{rem*}
Note, that generally not all $\mu_k$ are eigenvalues of $R$. One can see from \eqref{e:defect R} 
below, that $\mu_k$ is an eigenvalue of $R$ if and only if $\dim\ker (R_1-\mu_k I) \ge 2$. 
\end{rem*}

\begin{proof}[Proof of Lemma \ref{l:spectrum R}]
If $\{0\} \ne \cH_1 := (\cH_0 \oplus\cK\oplus \cK_1)^\perp$, then the operator $R\big|_{\cH_1}$ 
(and so $R$) has an eigenvalue $s$, different from all $\lambda_k$, $\mu_k$. Since $R$ and $R_1$ 
coincide on $\cH_0^\perp$, we have $\ker (R- sI ) = \ker (R_1 - sI )$. 

Since $\bvf$ and $\bvf_1$ commute with $R$ and $R_1$ respectively, the subspace $\ker (R- sI ) $ is 
a reducing subspace for both  $\bvf$ and $\bvf_1$. Therefore $\ker (R- sI ) $ is a reducing 
subspace for the operator $\Sigma^*= \bvf_1 R_1 R^{-1} \bvf^*$ and $\Sigma^*$ acts unitarily there, 
which contradicts asymptotic stability.  
\end{proof}

%Finally, 
%$\cK_1$ is also $R$-invariant as the orthogonal complement of the invariant (for a self-adjoint 
%operator) subspace $\cH_0\oplus\cK$. 

%Note also that $R$ and $R_1$ coincide on $\cK$ and $\cK_1$, so $\cK$ and $\cK_1$ are invariant for 
%$R_1$ as well. 
%
%Thus  $\cH=\cH_0 \oplus \cK\oplus\cK_1$, and all the subspaces there are invariant for both $R$ 
%and 
%$R_1$.  

\subsection{Structure of eigenspaces}
Denote
\begin{align*}
p_k:= P\ci{\ker (R-\lambda_k I)} p, \qquad p^1_k:= P\ci{\ker (R_1-\mu_k I)} p .
\end{align*}
It follows from the spectral theorem that the mapping $f\mapsto f(R) p$ is a unitary map from 
$L^2(\rho)$ to $\cH_0$, so 
\begin{align}
\label{e:norm p_k}
p_k\in\cH_0 \quad\text{and}\quad &\|p_k\|^2 = \rho(\{\lambda_k\}) = w_k >0;
\intertext{similarly}
\label{e:norm p^1_k}
p_k^1\in\cH_0 \quad\text{and}\quad &\|p_k^1\|^2 = \rho^{[1]}(\{\mu_k\}) = w^{[1]}_k >0 .
\end{align}
%%
%$p_k\in\cH_0$ and  $\|p_k\| = \rho(\{\lambda_k\}) = w_k >0$; similarly  
%$p_k^1\in \cH_0$ and $\|p_k^1\|= \rho^{[1]}(\{\mu_k\}) = w^{[1]}_k >0$. 

\begin{lm}
\label{l:struct of evalues}
We have
\begin{align}
\label{e:defect R_1}
\ker (R-\lambda_k I) &= \ker(R_1-\lambda_k I) \oplus \spn\{p_k\}, \\
\label{e:defect R}
\ker (R_1-\mu_k I) &= \ker(R-\mu_k I) \oplus \spn\{p^1_k\} .
\end{align}
\end{lm}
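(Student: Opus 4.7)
The plan is to exploit the reducing decomposition $\cH=\cH_0\oplus\cH_0^\perp$, on which the rank-one perturbation relation $R^2-R_1^2=pp^*$ degenerates nicely, together with the spectral separation forced by the intertwining relation \eqref{e:Intertwine lambda mu}.

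First I would record three preliminary observations. Since $\cH_0=\cspn\{R^np:n\ge 0\}$ is $R$-invariant and contains $p$, it is invariant for both $R^2$ and $pp^*$, hence for $R_1^2=R^2-pp^*$, and therefore (as $R_1\ge 0$) for $R_1$; so $\cH_0$ reduces both $R$ and $R_1$. On $\cH_0^\perp$ the vector $p$ acts as $0$, so $pp^*|_{\cH_0^\perp}=0$, giving $R^2=R_1^2$ on $\cH_0^\perp$ and hence $R|_{\cH_0^\perp}=R_1|_{\cH_0^\perp}$ by non-negativity. Finally, by definition of $\{\lambda_k\}$, $\{\mu_k\}$, the non-zero eigenvalues of $R|_{\cH_0}$ are the $\lambda_k$'s and those of $R_1|_{\cH_0}$ are the $\mu_k$'s, and by \eqref{e:Intertwine lambda mu} the sets $\{\lambda_k\}$ and $\{\mu_k\}$ are disjoint, so
\[
\ker(R_1-\lambda_k I)\subseteq\cH_0^\perp, \qquad \ker(R-\mu_k I)\subseteq\cH_0^\perp.
\]
In particular every vector in either of these kernels is orthogonal to $p\in\cH_0$.

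For \eqref{e:defect R_1} I would argue both inclusions in the orthogonal sum. To show $\supseteq$, note $p_k\in\ker(R-\lambda_k I)$ by construction; if $y\in\ker(R_1-\lambda_k I)$ then $y\perp p$ by the preliminary step, so $R^2y=R_1^2y+pp^*y=\lambda_k^2 y$ and (since $R\ge 0$, $\lambda_k>0$) $y\in\ker(R-\lambda_k I)$. The two summands are orthogonal because $p_k\in\cH_0$ while $\ker(R_1-\lambda_k I)\subseteq\cH_0^\perp$. For $\subseteq$, take $x\in\ker(R-\lambda_k I)$ and decompose orthogonally inside this eigenspace as $x=\alpha p_k+y$ with $y\perp p_k$. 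Because $p_k$ is the orthogonal projection of $p$ onto $\ker(R-\lambda_k I)$, one has $\langle z,p\rangle=\langle z,p_k\rangle$ for every $z$ in that eigenspace, so $y\perp p_k$ forces $y\perp p$. Then $R_1^2y=R^2y-pp^*y=\lambda_k^2 y$, giving $y\in\ker(R_1-\lambda_k I)$ as desired.

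The identity \eqref{e:defect R} is proved by the symmetric argument, with the roles of $(R,\lambda_k,p_k)$ and $(R_1,\mu_k,p_k^1)$ swapped: $p_k^1\in\ker(R_1-\mu_k I)$ by definition, and any $x\in\ker(R-\mu_k I)$ lies in $\cH_0^\perp$ hence is orthogonal to $p$, so $R_1^2x=R^2x=\mu_k^2x$ and $x\in\ker(R_1-\mu_k I)$; conversely splitting $x\in\ker(R_1-\mu_k I)$ as $\beta p_k^1+y$ with $y\perp p_k^1$ yields $y\perp p$ and $R^2y=\mu_k^2 y$, so $y\in\ker(R-\mu_k I)$. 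I do not expect a real obstacle here; the only non-cosmetic point is the spectral separation $\ker(R_1-\lambda_k I)\subseteq\cH_0^\perp$ (and its companion), which is where the intertwining \eqref{e:Intertwine lambda mu} does the work and which makes the rank-one perturbation $R^2-R_1^2=pp^*$ act trivially on the relevant eigenvectors.
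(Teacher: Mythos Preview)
Your proof is correct and is essentially a fleshed-out version of the paper's one-line argument. The paper simply says the result ``follows immediately from the fact that in the decomposition $\cH=\cH_0\oplus\cK\oplus\cK_1$ both $R$ and $R_1$ have block diagonal structure''; you instead work with the coarser decomposition $\cH=\cH_0\oplus\cH_0^\perp$ and carry out the computation directly, using that $R=R_1$ on $\cH_0^\perp$ and that the intertwining relation forces $\ker(R_1-\lambda_kI)\subset\cH_0^\perp$ and $\ker(R-\mu_kI)\subset\cH_0^\perp$. This is the same mechanism, and your version has the minor advantage of not invoking the decomposition $\cH=\cH_0\oplus\cK\oplus\cK_1$ from Lemma~\ref{l:spectrum R} (which uses the asymptotic stability hypothesis), so the lemma holds for any abstract spectral data with compact $R$.
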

\begin{proof}
The proof follows immediately from the fact that in the decomposition $\cH=\cH_0 \oplus \cK 
\oplus\cK_1$ both $R$ and $R_1$ have the block diagonal structure. 
\end{proof}

%Using this lemma, we can get an alternative representation for $\cK_1$, dual in a sence to the 
%representation \eqref{e:K K_1} for $\cK$, 
%%%
%\begin{align}
%\label{e:K_1 alt 01}
%\cK_1 = \cspn\{ \ker (R_1 -\mu_k I) : k\ge 1 \} \ominus \cH_0. 
%\end{align}
%%%
We can also get different formula for $\cK$, $\cK_1$
\begin{align}
\label{e:K alt}
\cK & = \cspn\{ \ker (R_1 -\lambda_k I) : k\ge 1 \} \\
\label{e:K_1 alt}
\cK_1 & = \cspn\{ \ker (R  -\mu_k I) : k\ge 1 \}.
\end{align}
\subsection{Canonical choice of the conjugation \tp{$\fJ_p$}{Jp}}
\label{s:canonical J_p}

Let $\fJ_p$ be a conjugation commuting with $R$ and preserving $p$. By Lemma \ref{l:Jp} $\fJ_p$ is 
a reducing subspace for $\fJ_p$, meaning that both $\cH_0$ and $\cH_0^\perp$ are invariant for 
$\fJ_p$.

The conjugation $\fJ_p$ commutes with both $R$ and $R_1$, so eigenspaces of both operators should 
be invariant for $\fJ_p$. Therefore by \eqref{e:K alt}, \eqref{e:K_1 alt} subspaces $\cK$, $\cK_1$ 
are invariant for $\fJ_p$. 

Since the unitary operator $\bvf$ commutes with $R$, eigenspaces of $R$ should be reducing 
subspaces for $\bvf$, so by \eqref{e:K_1 alt} $\cK_1$ is a reducing subspace for $\bvf_1$. 
Similarly, \eqref{e:K  alt} implies that $\cK$ is a reducing subspace for  $\bvf_1$. 

Thus, by picking the unitary operator $\bpsi := I\ci{\cH_0} \oplus \bvf_1\big|_{\cK} \oplus 
\bvf\big|_{\cK_1}$, we find a representative from the equivalence class give by \eqref{e:EquivRel 
ASD} such that 
\begin{align}
\label{e:canonical phi}
\bvf\big|_{\cK_1} = I\ci{\cK_1}, \qquad \bvf_1\big|_{\cK} = I\ci{\cK} \,.
\end{align}

\begin{lm}
\label{l:cyclicity 01}
Let $R$, $R_1$, $p$, $\fJ_p$, $\bvf$, $\bvf_1$ be an abstract spectral data such that $\bvf$ and  
$\bvf_1$ satisfy conditions \eqref{e:canonical phi}. If the operator $\Sigma^* = \bvf_1 R_1 R^{-1} 
\bvf^* =  R_1 \bvf_1 \bvf^* R^{-1} $ is asymptotically stable, then for each $k$ the vector $p_k$ 
is $*$-cyclic for $\bvf\big|_{\ker (R-\lambda_k I)}$, and  $p_k^1$ is $*$-cyclic for 
$\bvf_1\big|_{\ker (R_1-\mu_k I)}$. 
\end{lm}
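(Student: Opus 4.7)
The plan is to argue by contradiction: if $p_k$ fails to be $*$-cyclic for $\bvf\big|_{\ker(R-\lambda_k I)}$ we will produce a non-trivial reducing subspace of $\Sigma^*$ on which $\Sigma^*$ acts unitarily, contradicting asymptotic stability. The analogous argument, with the roles of $R$ and $R_1$ swapped, will handle $p_k^1$.

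First I would set up the geometry. Since $\bvf$ commutes with $R$, the eigenspace $\ker(R-\lambda_k I)$ is a reducing subspace for $\bvf$, and $\bvf\big|_{\ker(R-\lambda_k I)}$ is a unitary operator on a (possibly infinite-dimensional) Hilbert space. If $p_k$ were not $*$-cyclic for this unitary, there would exist a non-trivial reducing subspace $F\subseteq \ker(R-\lambda_k I)$ for $\bvf\big|_{\ker(R-\lambda_k I)}$ with $F\perp p_k$; in particular both $\bvf$ and $\bvf^*$ leave $F$ invariant. By the decomposition \eqref{e:defect R_1}, the orthogonal complement of $p_k$ inside $\ker(R-\lambda_k I)$ is exactly $\ker(R_1-\lambda_k I)$, so $F\subseteq \ker(R_1-\lambda_k I)$. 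By \eqref{e:K alt} this forces $F\subseteq \cK$.

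Next I would show that $F$ is a reducing subspace for $\Sigma^*=\bvf_1 R_1 R^{-1}\bvf^*$ on which $\Sigma^*$ acts as a unitary. On $F$ we have $R=\lambda_k I$ and (since $F\subset\cH_0^\perp$ and $R,R_1$ agree there) also $R_1=\lambda_k I$; by the canonical normalization \eqref{e:canonical phi} we have $\bvf_1\big|_{\cK}=I\ci{\cK}$, hence $\bvf_1\big|_F=I\ci F$. Combining these,
\begin{align*}
\Sigma^*\big|_F \;=\; \bvf_1\cdot R_1\cdot R^{-1}\cdot \bvf^* \big|_F \;=\; I\cdot \lambda_k\cdot \lambda_k^{-1}\cdot \bvf^*\big|_F \;=\; \bvf^*\big|_F,
\end{align*}
which is unitary on $F$. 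The same computation with $\bvf$ in place of $\bvf^*$ (starting from $\Sigma=\bvf R^{-1}R_1\bvf_1^*$) shows $\Sigma\big|_F=\bvf\big|_F$, so $F$ is reducing for $\Sigma^*$. Then $\|(\Sigma^*)^n x\|=\|x\|$ for every $x\in F$, contradicting the asymptotic stability of $\Sigma^*$. Hence $p_k$ is $*$-cyclic for $\bvf\big|_{\ker(R-\lambda_k I)}$.

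The argument for $p_k^1$ is the mirror image. If $p_k^1$ failed to be $*$-cyclic for $\bvf_1\big|_{\ker(R_1-\mu_k I)}$, I would obtain a non-trivial reducing subspace $F_1\subseteq \ker(R_1-\mu_k I)$ for $\bvf_1\big|_{\ker(R_1-\mu_k I)}$ with $F_1\perp p_k^1$; by \eqref{e:defect R} and \eqref{e:K_1 alt} this gives $F_1\subseteq \ker(R-\mu_k I)\subseteq \cK_1$. On $F_1$ we have $R=R_1=\mu_k I$ (recall $\mu_k>0$ by convention) and, by \eqref{e:canonical phi}, $\bvf\big|_{F_1}=I\ci{F_1}$, so
\begin{align*}
\Sigma^*\big|_{F_1} \;=\; \bvf_1\cdot \mu_k\cdot \mu_k^{-1}\cdot I\big|_{F_1} \;=\; \bvf_1\big|_{F_1},
\end{align*}
which is unitary on $F_1$, again contradicting asymptotic stability. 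The main subtlety is purely bookkeeping: one must verify that the constructed $F$ (resp.\ $F_1$) is simultaneously a reducing subspace for $R$, $R_1$, $\bvf$ and $\bvf_1$, which is exactly what the containments $F\subseteq\cK$ and $F_1\subseteq\cK_1$ together with the canonical normalization \eqref{e:canonical phi} provide.
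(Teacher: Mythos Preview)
Your proof is correct and follows essentially the same approach as the paper: both argue by contradiction, take a non-trivial reducing subspace orthogonal to $p_k$ (respectively $p_k^1$), use \eqref{e:defect R_1}--\eqref{e:defect R} to place it inside $\ker(R_1-\lambda_k I)\subset\cK$ (respectively $\ker(R-\mu_k I)\subset\cK_1$), and then invoke the canonical normalization \eqref{e:canonical phi} to see that $\Sigma^*$ restricts there to a unitary, contradicting asymptotic stability. Your write-up is in fact slightly more careful than the paper's in that you explicitly verify $F$ is reducing (not merely invariant) for $\Sigma^*$ and note $\mu_k>0$; these are harmless additions to the same argument.
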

\begin{proof}
Let for some $k$ the vector $p_k$ is not $*$-cyclic for $\bvf\big|_{\ker (R-\lambda_k I)}$. Then 
there exists a subspace $\cE\subset \ker (R-\lambda_k I)$, $p_k\perp \cE$, which is reducing for 
$\bvf\big|_{\ker (R-\lambda_k I)}$ (and thus for $\bvf$). Since $\cE\perp p_k$, we can also say 
that $\cE\subset \ker (R_1-\lambda_k I)$. 

By \eqref{e:canonical phi} the  operator $\bvf_1$ acts as identity on $\ker (R-\lambda_k I)$. Also, 
trivially
\begin{align*}
R\big|_{\ker (R-\lambda_k I)} = \lambda_k I\ci{\ker (R-\lambda_k I)}, \qquad
R_1\big|_{\ker (R_1-\lambda_k I)} = \lambda_k I\ci{\ker (R_1-\lambda_k I)}. 
\end{align*}
Since $\cE \subset \ker (R_1 - \lambda_k I) \subset 
\ker (R - \lambda_k I)$ and $\cE $ is reducing for $\bvf^*$, we conclude that $\cE$ is an invariant 
subspace for $\Sigma^*$ and $\Sigma^*\big|_\cE=\bvf^*\big|_\cE$, so it is unitary. 
But in this case $\Sigma^*$ cannot be asymptotically stable, and we got a contradiction. Thus, 
$p_k$ is $*$-cyclic for $\bvf\big|_{\ker (R-\lambda_k I)}$. 

The fact that $p_k^1$ is $*$-cyclic for $\bvf_1\big|_{\ker (R_1-\mu_k I)}$ is proved exactly the 
same way, with obvious changes. 
\end{proof}

So, for the abstract spectral data we have a canonical choice of the unitary operators $\bvf$ and 
$\bvf_1$, commuting with $R$ and $R_1$ respectiely: they must satisfy conditions \eqref{e:canonical 
phi}, and the vectors $p_k$ and $p_k^1$ should be $*$-cyclic for $\bvf\big|_{\ker (R-\lambda_k I)}$ 
and $\bvf_1\big|_{\ker (R_1-\mu_k I)}$ 
respectively.%
\footnote{the $*$-cyclicity condition must be satisfied, since otherwise $\Sigma^*$ is not 
asymptotically stable, and the abstract spectral data does not correspond to a Hankel operator.}

So, suppose we have such unitary operators $\bvf$ and $\bvf_1$. Do they indeed correspond to an
abstract spectral data, i.e.~can find a conjugation $\fJ_p$, commuting with $R$ and preserving $p$,
such that both $\bvf$ and $\bvf_1$ are $\fJ_p$-symmetric? And if such conjugation $\fJ_p$ exists, is
it unique?

\begin{lm}
\label{l:uniq J_p}
Let $\bvf$ and $\bvf_1$ be unitary operators, commuting with $R$ and $R_1$ respectively and 
satisfyng the conditions \eqref{e:canonical phi}. Assume also that $p_k$ is $*$-cyclic% 
\footnote{For unitary operators in a finite-dimensional space, or more generally, for unitary 
operators with purely singular spectral measure, a vector $p$ is cyclic if and only if it is 
$*$-cyclic.   However, we do not want to bother readers with the unnecessary details, so we state 
this lemma using the notion of $*$-cyclicity.}
for 
$\bvf\big|_{\ker (R-\lambda_k I)}$ and $p_k^1$ is $*$-cyclic for $\bvf_1\big|_{\ker (R_1-\mu_k 
I)}$. 

Then there exists a unique conjugation $\fJ_p$, commuting with $R$ and preserving $p$, such that 
both $\bvf$ and $\bvf_1$ are $\fJ_p$-symmetric. 
\end{lm}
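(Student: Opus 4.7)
The plan is to build $\fJ_p$ separately on each $R$-eigenspace and each $R_1$-eigenspace, exploiting the $*$-cyclicity assumption, and then glue the pieces together. The key auxiliary fact is the following: if $U$ is a unitary on a Hilbert space $\cE$ with a $*$-cyclic vector $v$, then there is a unique conjugation $\fJ$ on $\cE$ with $\fJ v = v$ and $\fJ U \fJ = U^*$. Existence follows from the spectral theorem---the pair $(U,v)$ is unitarily equivalent to $(M_z, \1)$ on some $L^2(\mu)$ with $\mu$ a probability measure on $\T$, and pointwise complex conjugation supplies the conjugation. Uniqueness follows because any two such $\fJ, \fJ'$ yield a linear unitary $\fJ'\fJ$ that fixes every $U^n v$, $n \in \Z$ (using $\fJ U^n \fJ = U^{-n}$); by $*$-cyclicity this forces $\fJ'\fJ = I$, so $\fJ = \fJ'$.

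By Lemma \ref{l:spectrum R} we have the $R$-eigenspace decomposition $\cH = \bigoplus_k \ker(R - \lambda_k I) \oplus \bigoplus_l \cK_1^{(l)}$ and the $R_1$-eigenspace decomposition $\cH = \bigoplus_l \ker(R_1 - \mu_l I) \oplus \bigoplus_k \cK^{(k)}$, with $\ker(R - \lambda_k I) = \spn\{p_k\} \oplus \cK^{(k)}$ and $\ker(R_1 - \mu_l I) = \spn\{p_l^1\} \oplus \cK_1^{(l)}$. Applying the auxiliary fact to the unitary $\bvf|_{\ker(R-\lambda_k I)}$ with $*$-cyclic vector $p_k$ produces a unique conjugation $\fJ^{(k)}$ on $\ker(R - \lambda_k I)$ fixing $p_k$ and making $\bvf$ symmetric there; similarly one gets $\widetilde\fJ^{(l)}$ on $\ker(R_1 - \mu_l I)$ fixing $p_l^1$ and making $\bvf_1$ symmetric. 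Since $\fJ^{(k)} p_k = p_k$, $\fJ^{(k)}$ preserves $\spn\{p_k\}$ (acting as $\alpha p_k \mapsto \bar\alpha p_k$) and hence its orthogonal complement $\cK^{(k)}$ inside $\ker(R-\lambda_k I)$, so the restriction $\fJ^{(k)}|_{\cK^{(k)}}$ is a well-defined conjugation; likewise for $\widetilde\fJ^{(l)}|_{\cK_1^{(l)}}$. Define $\fJ_p$ on $\cH$ by
\[
\fJ_p|_{\cH_0} := \fJ_p^{(0)}, \qquad \fJ_p|_{\cK^{(k)}} := \fJ^{(k)}|_{\cK^{(k)}}, \qquad \fJ_p|_{\cK_1^{(l)}} := \widetilde\fJ^{(l)}|_{\cK_1^{(l)}},
\]
where $\fJ_p^{(0)}$ is the unique conjugation on $\cH_0$ from Lemma \ref{l:Jp}.

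The point to verify is the consistency $\fJ_p|_{\ker(R - \lambda_k I)} = \fJ^{(k)}$ and $\fJ_p|_{\ker(R_1 - \mu_l I)} = \widetilde\fJ^{(l)}$. Both reduce to the identities $\fJ_p^{(0)} p_k = p_k$ and $\fJ_p^{(0)} p_l^1 = p_l^1$: the first is immediate because $\fJ_p^{(0)}$ commutes with the spectral projection $P\ci{\ker(R - \lambda_k I)}$ and fixes $p$; the second holds because $\fJ_p^{(0)}$ also commutes with $R_1$ (from $R_1^2 = R^2 - pp^*$ and $\fJ_p^{(0)} p = p$, as observed earlier in the paper), hence commutes with $P\ci{\ker(R_1-\mu_l I)}$ as well. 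Granting consistency, the required properties follow at once: $\fJ_p$ commutes with $R$ because it preserves every $R$-eigenspace (on each of which $R$ is a real scalar), it fixes $p = \sum_k p_k$, and $\bvf, \bvf_1$ are $\fJ_p$-symmetric by the eigenspace construction (on the ``wrong'' eigenspaces condition \eqref{e:canonical phi} gives $\bvf = I$ or $\bvf_1 = I$, making symmetry automatic). Uniqueness is obtained by the same reasoning in reverse: any admissible $\fJ_p'$ restricts to a conjugation on $\ker(R - \lambda_k I)$ fixing $p_k$ and making $\bvf$ symmetric, hence equals $\fJ^{(k)}$ by the auxiliary uniqueness, and analogously on the $R_1$-eigenspaces. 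The main subtlety throughout is the consistency of the two gluing prescriptions on $\cH_0$, which is precisely what the uniqueness clause of Lemma \ref{l:Jp} delivers---both $\bigoplus_k \fJ^{(k)}|_{\spn\{p_k\}}$ and $\bigoplus_l \widetilde\fJ^{(l)}|_{\spn\{p_l^1\}}$ are conjugations on $\cH_0$ commuting with $R|_{\cH_0}$ (resp.\ $R_1|_{\cH_0}$) and preserving $p$, so each must coincide with $\fJ_p^{(0)}$.
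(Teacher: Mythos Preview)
Your proof is correct and follows essentially the same route as the paper: both arguments invoke the auxiliary uniqueness lemma for a unitary with a $*$-cyclic vector (the paper's Lemma~\ref{l:uniq Conj 01}), construct the conjugation blockwise on the eigenspaces $\ker(R-\lambda_k I)$ and $\ker(R_1-\mu_l I)$, restrict to the complements $\ker(R_1-\lambda_k I)$ and $\ker(R-\mu_l I)$, and glue using the unique conjugation on $\cH_0$ from Lemma~\ref{l:Jp}. Your treatment of the consistency on $\cH_0$ is slightly more explicit than the paper's, but the underlying argument is the same.
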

To prove Lemma \ref{l:uniq J_p}, we need the following simple lemma, that is true for arbitrary 
unitary operator. 

\begin{lm}
\label{l:uniq Conj 01} Let $\bu$ be a unitary operator, and let $p$ be a $*$-cyclic vector for 
$\bu$. 
There exists a unique conjugation $\fJ$, preserving $p$ and such that $\bu$ is $\fJ$-symmetric. 

This conjugation is defined by 
\begin{align}
\label{e:def conj 02}
\fJ \left( \sum_{k\in\Z} \alpha_k \bu^k p \right) = \sum_{k\in\Z} \bar\alpha_k \bu^{-k} p 
\end{align}
on the dense set of finite linear combinations $\sum_{k\in\Z} \alpha_k \bu^k p$. 
\end{lm}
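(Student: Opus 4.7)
The plan is to prove uniqueness first (which forces the formula \eqref{e:def conj 02}), and then verify that this formula actually defines a conjugation with the required properties.

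For uniqueness, suppose $\fJ$ is any conjugation satisfying $\fJ p = p$ and $\bu^* = \fJ \bu \fJ$. The symmetry relation rewrites as $\fJ \bu = \bu^* \fJ$, hence by induction $\fJ \bu^k = \bu^{-k} \fJ$ for all $k\in\Z$. Applying $\fJ$ to a finite combination and using conjugate-linearity together with $\fJ p = p$ gives exactly \eqref{e:def conj 02}. Since $\{\bu^k p : k\in\Z\}$ spans a dense subspace by $*$-cyclicity, and a conjugation is continuous, $\fJ$ is uniquely determined.

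For existence, I would define $\fJ$ on the dense subspace $\mathcal{D} := \spn\{\bu^k p : k\in\Z\}$ by the formula \eqref{e:def conj 02}. The main step is to verify simultaneously well-definedness and isometry. For $x = \sum_k \alpha_k \bu^k p$ and $y = \sum_j \beta_j \bu^j p$ in $\mathcal{D}$, a direct computation using unitarity of $\bu$ gives
\begin{align*}
\langle \fJ x, \fJ y \rangle
= \sum_{k,j} \bar\alpha_k \beta_j \langle \bu^{-k} p, \bu^{-j} p \rangle
= \sum_{k,j} \bar\alpha_k \beta_j \langle \bu^{j-k} p,  p \rangle
= \langle y, x \rangle .
\end{align*}
Taking $x=y$ yields $\|\fJ x\| = \|x\|$, which shows that if $x$ has two representations as a finite linear combination then the two candidate values for $\fJ x$ agree (their difference has zero norm); so $\fJ$ is well-defined and isometric on $\mathcal{D}$. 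Conjugate-linearity is immediate from the formula. Involution $\fJ^2 = I$ is verified on the spanning vectors $\bu^k p$ directly. By continuity, $\fJ$ extends to a conjugation on all of $\cH$.

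It remains to check $\bu^* = \fJ \bu \fJ$, which I would verify on the dense set by the one-line computation
\begin{align*}
\fJ \bu \fJ (\bu^k p) = \fJ \bu (\bu^{-k} p) = \fJ (\bu^{1-k} p) = \bu^{k-1} p = \bu^* (\bu^k p),
\end{align*}
and then extend by continuity. The preservation of $p$ is the $k=0$ case of \eqref{e:def conj 02}. The only substantive point is the well-definedness/isometry computation; everything else is formal manipulation on the dense spanning set provided by $*$-cyclicity.
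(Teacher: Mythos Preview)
Your proof is correct and follows essentially the same approach as the paper: uniqueness forces the formula \eqref{e:def conj 02} via $\fJ\bu^k p = \bu^{-k}p$, and existence is checked by verifying the required properties on the dense set and extending by continuity. If anything, your treatment is more careful than the paper's, which omits the explicit verification of well-definedness and isometry of the formula (implicitly relying on the spectral representation noted in the remark that follows).
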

\begin{rem}
\label{r:J in spectral repr}
Let $\rho$ be the spectral measure of the operator $\bu$ from the above lemma, corresponding to 
the 
vector $p$. It is easy to see that in the spectral representation of $\bu$, where $\bu$ 
corresponds to 
the multiplication operator by the independent variable $\xi$, and $p$ corresponds to $\1\in 
L^2(\rho)$, the operator $\fJ$ from Lemma \ref{l:uniq Conj 01} is given by 
\begin{align*}
\fJ f = \overline f , \qquad f\in L^2(\rho). 
\end{align*}
\end{rem}

\begin{proof}[Proof of Lemma \ref{l:uniq Conj 01}]
Let $f$ be a finite linear combination $f=\sum_{k\in\Z} \alpha_k \bu^k p$. Then %%
\begin{align*}
\fJ \bu f & = \fJ \left( \sum_{k\in\Z} \alpha_k \bu ^{k+1} p \right) =\sum_{k\in\Z} 
\bar\alpha_{-k} 
\bu ^{-k-1} p  \\
& =\bu ^* \sum_{k\in\Z} \bar\alpha_{-k} \bu ^{-k} p = \bu ^* \fJ f, 
\end{align*}
so the identity $\fJ \bu  = \bu ^* \fJ$ holds on a dense set. By continuity it holds on the whole 
space 
$\cH$, so $\bu $ is $\fJ$-symmetric. 

On the other hand, if $\bu $ is $\fJ$-symmetric, and $\fJ p=p$, then $\fJ \bu ^k p = \bu ^{-k} 
\fJ p = 
\bu ^{-k}  p$, so by conjugate linearity of $\fJ$ we get \eqref{e:def conj 02}. So the conjugation 
$\fJ$ is uniquely defined on a dense set, and by continuity on the whole space $\cH$. 
\end{proof}

\begin{proof}[Proof of Lemma \ref{l:uniq J_p}]
By Lemma \ref{l:Jp} any conjugation $\fJ_p$ commuting with $R$ and preserving $p$ is uniquely 
defined on $\cH_0$ (and is given there by \eqref{e: fJ_p 01}). 

From \eqref{e:K alt}, \eqref{e:K_1 alt} we can see that 
\begin{align*}
\cH =\cH_0 \bigoplus_{k} \ker (R_1 -\lambda_k I) \bigoplus_{k} \ker (R -\mu_k I) , 
\end{align*}
so we just need to define $\fJ_p$ on each of the eigenspaces. 

To define $\fJ_p$ on $\ker (R_1 -\lambda_k I)$ let us consider a bigger subspace
\begin{align*}
\ker (R -\lambda_k I) = \ker (R_1 -\lambda_k I) \oplus \spn\{k_p\},  
\end{align*}
and define $\fJ_{p,k}$ to be the unique conjugation there, preserving $p$ and such that  
$\bvf\big|_{\ker(R-\lambda_k I)}$ is $\fJ_{p,k}$-symmetric, see Lemma \ref{l:uniq Conj 01}. The 
fact that $\fJ_{p,k} p_k=p_k$ implies that $\ker(R-\lambda_k I)$ is a reducing subspace for 
$\fJ_{p,k}$, so the restriction $\fJ_{p,k}\big|_{\ker(R_1-\lambda_k I)}$ is well defined. 

Similarly, we define the conjugation $\fJ_{p,k}^1$ on $\ker(R_1-\mu_k I)$  to be the unique  
conjugation preserving $p_{k}^1$ and  such that $\bvf_1\big|_{\ker(R_1-\mu_k I)}$ is 
$\fJ_{p,k}^1$-symmetric. Again, the subspace $\ker(R-\mu_k I)$ is reducing for $\fJ_{p,k}^1$, so 
the conjugation   $\fJ_{p,k}^1\big|_{\ker(R-\mu_k I)}$ is well defined. 

Thus, taking for $\fJ_p\big|_{\cH_0}$ to be the unique conjugation on $\cH_0$ commuting with 
$R\big|_{\cH_0}$ and preserving $p$, and defining
\begin{align}
\label{e:restrictions of J_p}
\fJ_p\big|_{\ker(R_1-\lambda_k I)} := \fJ_{p,k}\big|_{\ker(R_1-\lambda_k I)}, \qquad
\fJ_p\big|_{\ker(R-\mu_k I)} := \fJ_{p,k}^1\big|_{\ker(R-\mu_k I)}, 
\end{align}
we get the conjugation $\fJ_p$ with the desired properties. 

Indeed, it preserves $p$ by the definition, and since $\bvf\big|_{\ker(R-\lambda_k I)}$ is 
$\fJ_{p,k}$-symmetric, we conclude that $\bvf\big|_{\cK}$ is $(\fJ_p \big|_{\cK})$-symmetric; here 
we used the fact that
\begin{align*}
\cK\oplus\cH_0 :=\cspn\{ \ker (R -\lambda_k I) : k\ge 1 \} , 
\end{align*}
see \eqref{e:K K_1}. Since $\bvf\big|_{\cK_1} =I\ci{\cK_1}$, we conclude that $\bvf$ is 
$\fJ_p$-symmetric. The fact that $\bvf_1$ is $\fJ_p$-symmetric is checked similarly, using the fact 
that $\bvf_1 \big|_{\ker(R_1-\mu_k I)}$ is $\fJ_{p,k}^1$-symmetric. 

As for the uniqueness of $\fJ_p$, we notice that for any such $\fJ_p$ the subspaces $\ker 
(R-\lambda_k I)$, $\ker (R_1-\mu_k I)$ are reducing for $\fJ_p$, and that by Lemma \ref{l:uniq Conj 
01} 
\begin{align*}
\fJ_p\big|_{\ker(R-\lambda_k I)} := \fJ_{p,k}, \qquad
\fJ_p\big|_{\ker(R_1-\mu_k I)} := \fJ_{p,k}^1, 
\end{align*}
with $\fJ_{p,k}$, $\fJ_{p,k}^1$ defined above. Thus the identities \eqref{e:restrictions of J_p} 
must hold, so the conjugation $\fJ_p$ is unique (recall, that as we already discussed above, 
$\fJ_p\big|_{\cH_0}$ is uniquely defined).  
\end{proof}

As we discussed above, the canonical choice of unitary operators $\bvf$ and $\bvf_1$ is given by 
the operators satisfying \eqref{e:canonical phi} and such that the vectors $p_k$ and $p_k^1$ are 
$*$-cyclic for $\bvf\big|_{\ker (R-\lambda_k I)}$ and $\bvf_1\big|_{\ker (R_1-\mu_k I)}$ 
respectively. 

Since $\cH=\cH_0\oplus \cK \oplus \cK_1$, and by \eqref{e:K K_1}
\begin{align*}
\cH_0\oplus \cK = \bigoplus_{k} \ker (R-\lambda_k I), 
\end{align*}
the restrictions $\bvf\big|_{\ker (R-\lambda_k I)}$ completely define the unitary operator $\bvf$, 
commuting with $R$, and such that $\bvf\big|_{\cK_1} =I\ci{\cK_1}$. 
Similarly, the restrictions $\bvf_1\big|_{\ker (R_1-\mu_k I)}$ completely define the unitary 
operator $\bvf_1$, commuting with $R_1$, and such that $\bvf_1\big|_{\cK} =I\ci{\cK}$. 

Finally, we want to get unitary invariant description of the spectral data. 
If we want to define the triple $\bvf$, $\bvf_1$, $p$ up to unitary equivalence, we just 
need to define each pair $\bvf\big|_{\ker (R-\lambda_k I)}$, $p_k$ and $\bvf_1\big|_{\ker 
(R_1-\mu_k I)}$, $p^1_k$ up to (separate) unitary equivalence.  And since we already know the 
triple $R$, $R_1$, $p$, that allows us to get the operators $\bvf$ and $\bvf_1$.   

\subsection{The canonical spectral data for compact operators}
\label{s:CASD compact}
Summarizing results of the previous subsection, let us give a simpler description of the abstract 
spectral data in the compact case. Let us consider only the abstract spectral data that could
correspond to a Hankel operator, i.e.~let us ignore the data for which we know for sure that the 
operator $\Sigma^*= \bvf_1 R_1 R^{-1}\bvf^*$ is not asymptotically stable. 

As it was discussed before, such abstract spectral data is given by a tuple $R$, $R_1$, $p$, 
$\bvf$, $\bvf_1$, where, in the notation of Section \ref{s:ASD compact} 
\begin{enumerate}
\item $R\ge0$,  $R_1\ge 0$, are self-adjoint compact operators, $\ker R=\{0\}$,  satisfying $R_1^2 
= R^2 
-pp^*$, $p\in\cH$. We also require that $\lambda_k$, $\mu_k$ are the only possible eigenvalues of 
$R$,  or, 
equivalently $\cH= \cH_0\oplus\cK\oplus\cK_1$, see Lemma \ref{l:spectrum R}
%; moreover $p_k=P\ci{\ker (R-\lambda_k I)} \ne 0$, $p^1_k:= P\ci{\ker (R_1-\mu_k I)} \ne 0$. 
\item $\bvf$ and $\bvf_1$ are unitary operators, commuting with $R$ and $R_1$ respectively, 
satisfying  \eqref{e:canonical phi}. 
We also assume that the vectors  $p_k=P\ci{\ker (R-\lambda_k 
I)}p$, and $p^1_k:= P\ci{\ker (R_1-\mu_k I)}p$ are $*$-cyclic for $\bvf\big|_{\ker (R-\lambda_k 
I)}$ and $\bvf_1\big|_{\ker (R_1-\mu_k I)}$ respectively, see Lemma \ref{l:cyclicity 01}.  
\end{enumerate}

The conjugation $\fJ_p$ is uniquely defined by the above abstract spectral data.

\subsection{A simple representation of the abstract spectral data}
Let $\bu$ be a unitary operator, $p$ be a $*$-cyclic vector for $\bu$, and $\rho$ be a spectral 
measure of $\bu$ corresponding to the vector $p$.  Recall that by the spectral theorem the pair 
$\bu$, $p$ is unitarily equivalent to the pair $M^\rho$, $\1\in L^2(\rho)$, where $M^\rho$ is the 
multiplication operator by the independent variable in $L^2(\rho)$: $M^\rho f(z) =z f(z)$, $f\in 
L^2(\rho)$. 

Note also that $\|\rho\| =\var \rho = \|p\|^2$. 

Recall also that  that if $\bu$ has  finite rank, then the spectral measure $\rho$ is \emph{finitely
supported} (i.e.~its support consists of finitely many point, or equivalently, it is represented as
$\rho=\sum_{k=1}^m a_k\delta_{\xi_k}$, $n=\rank \bu$).

To summarize the result of the previous subsection, the abstract spectral data in the case of  
compact gives us 

\begin{enumerate}
\item Two sequences of positive numbers $\{\lambda_k\}_{k\ge1}$ and $\{\mu_k\}_{k\ge1}$ 
(eigenvalues of $R\big|_{\cH_0}$ and $R_1\big|_{\cH_0}$), satisfying 
the intertwining relations \eqref{e:Intertwine lambda mu}. Here and $\{\lambda_k\}_{k\ge1}$ is 
either a finite sequence $\lambda_1, \lambda_1, \ldots, \lambda_n$ (in this case 
$\mu_n$ can be $0$), or $\lambda_k \searrow 0$. 

\item Finitely supported probability measures $\rho_k$ and $\rho^1_k$, where $\rho_k$ is the
spectral measure  of the operator  $\bvf\big|_{\ker (R-\lambda_k I)}$ with respect to the unit
vector $\|p_k\|^{-1}p_k$, and $\rho^1_k$ is the  spectral measure  of the operator 
$\bvf_1\big|_{\ker (R-\mu_k I)}$ with respect to the unit vector $\|p^1_k\|^{-1}p^1_k$.

\end{enumerate}

We claim that  if we are given sequences  $\{\lambda_k\}_{k\ge1}$,  $\{\mu_k\}_{k\ge1}$ of positive 
numbers, and two sequences $\{\rho_k\}_{k\ge1}$,  $\{\rho^1_k\}_{k\ge1}$ of finitely supported 
probability measures, we can find the corresponding abstract spectral data, i.e.~the tuple $R$, 
$R_1$, $p$, $\fJ_p$, $\bvf$, $\bvf_1$ (defined up to unitary equivalence, with $\bvf$ and $\bvf_1$ 
canonically chosen to satisfy \eqref{e:canonical phi}), such that  the above four sequences 
correspond to this abstract spectral data. 

First, by the Abstract Borg's Theorem (Theorem \ref{t:Borg 01}), the sequences 
$\{\lambda_k\}_{k\ge1}$ and $\{\mu_k\}_{k\ge1}$ define the triple $R\big|_{\cH_0}$, 
$R_1\big|_{\cH_0}$, $p$, up to unitary equivalence. 

That means we know the spectral measures $\rho$ and $\rho^{[1]}$, and so the weights $w_k$ and 
$w^{[1]}_k$ from \eqref{e:repr rho compact case}. Note than this also give us the norms $\|p_k\|$, 
$\|p^1_k\|$, see \eqref{e:norm p_k}, \eqref{e:norm p^1_k}. 

The dimensions of eigenspaces must be given by the cardinalities of the supports of the measures 
$\rho_k$ and $\rho^1_k$, 
\begin{align*}
\dim \ker (R-\lambda_k I )  &=\card \supp \rho_k, &\dim \ker (R_1-\mu_k I )    &=\card \supp 
\rho^1_k \\
\dim \ker (R-\mu_k I )    &=\card \supp \rho^1_k-1,  & \dim \ker (R_1-\lambda_k I )  &=\card \supp 
\rho_k  - 1.    
\end{align*}
Thus, the abstract spectral defines the triple $R$, $R_1$, $p$ up to unitary equivalence. 

Namely, we can define subspaces $E_{\lambda_k}, E_{\mu_k}\subset \ell^2 = \ell^2(\Z_+)$ as 
\begin{align*}
E_{\lambda_k} & :=\spn\{ e_j : 0\le j \le \card \supp \rho_k -1 \}, \\
E_{\mu_k} & :=\spn\{ e_j : 1\le j \le \card \supp \rho^{[1]}_k -1 \}, 
\end{align*}
and define  $\cH$ as the direct sum 
\begin{align*}
\cH=\Bigl(\bigoplus_{k\ge1} E_{\lambda_k}\Bigr)\oplus \Bigl(\bigoplus_{k\ge1} E_{\mu_k}\Bigr). 
\end{align*}
Defining $p_k\in E_{\lambda_k}$ by $p_k:= (w_k)^{1/2} e_0$, and $p:=\oplus_{k\ge1} p_k$, we get the 
triple $R$, $R_1$, $p$ up to unitary equivalence. In this representation 
$\cH_0=\cspn\{p_k:k\ge1\}$, the operator $R_1\ge0$ is defined from the rank one perturbation 
relation $R_1^2 = R^2 - pp^*$. The vectors $p^1_k=P\ci{\ker(R_1-\mu_k I)}\in \cH_0$ can be computed 
from this rank one perturbation relation. 

The eigenspaces of $R$ are given by 
\begin{align*}
\ker (R-\lambda_k I) &= E_{\lambda_k}, & \ker (R-\mu_k I) &= E_{\mu_k} , 
\intertext{and for $R_1$, see \eqref{e:defect R_1}, \eqref{e:defect R_1}, by}
\ker (R_1-\lambda_k I) &= E_{\lambda_k}\ominus \spn\{p_k\},  &\ker (R-\mu_k I) &= E_{\mu_k} \oplus 
\spn\{p^1_k\},  
\end{align*}

Finally, to define the canonical unitary operators $\bvf$, $\bvf_1$ satisfying \eqref{e:canonical 
phi}, we need to construct the restrictions $\bvf\big|_{\ker (R-\lambda_k I)}$ and 
$\bvf_1\big|_{\ker (R_1-\mu_k I)}$. 

The spectral measure $\rho_k$ defines the pair $\bvf\big|_{\ker (R-\lambda_k I)}$, $\|p_k\|^{-1} 
p_k$ up to unitary equivalence, and since we already know $\|p_k\|=(w_k)^{1/2}$, the pair 
$\bvf\big|_{\ker (R-\lambda_k I)}$, $p_k$ is also defined up to unitary equivalence. 
Similarly, the spectral measure $\rho^{[1]}_k$ defines the pair $\bvf_1\big|_{\ker (R_1-\mu_k I)}$, 
$p^{[1]}_k$ up to unitary equivalence. 
And as it was discussed at the end of section \ref{s:canonical J_p}, that is enough to define the 
operators   $\bvf$, $\bvf_1$.

\section{The inverse problem for compact Hankel operators} 
\label{asymptotic stability}

To check if the operator $\Sigma^*$ is asymptotically stable, is usually a very hard problem. 

However, in the case of compact operator $R$, if we exclude obvious obstacles, and consider the 
abstract spectral data as defined in Section \ref{s:CASD compact}, we will get the asymptotic 
stability essentially  for free. 

\begin{defin}
\label{d:WAS}
We say that an operator $A$ is \emph{weakly asymptotically stable}, if $A^n\to 0$ in the weak 
operator topology (W.O.T) as  $n\to\infty$. 
\end{defin}

\begin{lm}
\label{l:WAS A to AS T}
 Let $\| T \|\le 1$, and let $K$ be a compact operator with dense range. 
 Assume that  an operator $A$ satisfies
 \begin{align}
 \label{TK=KA}
T K = K A. 
 \end{align}
If $A$ is weakly asymptotically stable,  then $T$ is asymptotically stable. 
\end{lm}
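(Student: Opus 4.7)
The plan is to exploit the intertwining relation \eqref{TK=KA} to transfer weak asymptotic stability of $A$ to strong asymptotic stability of $T$ on the range of $K$, then use density together with the contractivity $\|T\| \le 1$ to extend to all of $\cH$.

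First, I would iterate the intertwining relation: from $TK = KA$ an easy induction yields $T^n K = K A^n$ for every $n \ge 0$. Consequently, for any vector of the form $x = Ky$ with $y \in \cH$, we have $T^n x = K A^n y$. The working hypothesis says that $A^n y \to 0$ in the weak operator topology, i.e.\ $A^n y \rightharpoonup 0$. Since $K$ is compact, it maps weakly convergent sequences to norm convergent ones, so $K A^n y \to 0$ strongly. This shows $T^n x \to 0$ strongly for every $x \in \ran K$.

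Next, I would extend this convergence from $\ran K$ to all of $\cH$ using the hypothesis that $\ran K$ is dense and that $T$ is a contraction. Given $x \in \cH$ and $\varepsilon > 0$, pick $x' \in \ran K$ with $\|x - x'\| < \varepsilon$. Then
\begin{align*}
\|T^n x\| \;\le\; \|T^n x'\| + \|T^n (x - x')\| \;\le\; \|T^n x'\| + \|x - x'\| \;<\; \|T^n x'\| + \varepsilon,
\end{align*}
where the second inequality uses $\|T\| \le 1$. Letting $n \to \infty$ and invoking the first step gives $\limsup_{n\to\infty} \|T^n x\| \le \varepsilon$, and since $\varepsilon$ was arbitrary we conclude $T^n x \to 0$, i.e.\ $T$ is asymptotically stable.

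There is no real obstacle here; the whole argument rests on the standard fact that compact operators convert weak convergence to norm convergence, which is precisely what upgrades W.O.T.\ convergence of $A^n$ to S.O.T.\ convergence of $T^n$ on $\ran K$. The contractivity $\|T\| \le 1$ is used only for the routine density extension.
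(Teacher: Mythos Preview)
Your proof is correct and is precisely the elementary argument the paper has in mind; the paper itself omits the proof entirely, stating only that it is elementary and left as an exercise (with a reference to \cite{Treil Liang 2022}). One small remark worth making explicit: the step ``$K$ maps weakly convergent sequences to norm convergent ones'' implicitly uses that the sequence $\{A^n y\}_{n\ge0}$ is bounded, which follows from the uniform boundedness principle since every weakly convergent sequence in a Hilbert space is bounded.
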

The proof is elementary, an we leave it as an exercise for the reader; see also \cite{Treil Liang 2022}.  

In this section we will construct an operator $A$ satisfying 
\begin{align}
\label{e:prop of A}
\Sigma^* R^{1/2} = R^{1/2} A. 
\end{align}
We will show that for the abstract spectral data from Section \ref{s:CASD compact} the operator $A$ 
is weakly asymptotically stable; since for such spectral data $R$ (and so $R^{1/2}$) is compact, we 
immediately conclude that $\Sigma^*$ is asymptotically stable. 

All the steps are pretty elementary, so essentially we will get the asymptotic stability for free. 

\subsection{Construction of \tp{$A$}{A}}
\label{s:A constr}  One can immediately see that an operator $A$ formally 
given by 
\begin{align*}
A=R^{-1/2} \Sigma^* R^{1/2} = R^{-1/2}  R_1\bvf_1 \bvf^* R^{1/2}
\end{align*}
formally satisfies \eqref{e:prop of A}. Let us make this construction rigorous. 

We know that $R_1^2\le R^2$, so by Heinz inequality with exponent $1/2$ we have that $R_1\le R$, 
and so by Douglas Lemma (Lemma \ref{l:Douglas}), the operator $Q:=R_1^{1/2}R^{-1/2}$ (defined 
initially on a dense set $\ran R^{1/2}$) extends to a contraction ($\|R_1^{1/2} R^{-1/2}\|\le 1$), 
and its adjoint is given by $Q^*=R^{-1/2}R^{1/2}$ (and is defined on the whole space $\cH$). 

Defining %$A:= Q\bvf_1 Q^*\bvf^*$, 
\begin{align}
\label{e:def A}
A:= Q^*\bvf_1 Q\bvf^*
\end{align}
we immediately see that $\|A\|\le 1$. Using the commutation 
relations $\bvf_1R_1^{1/2} = R_1^{1/2} \bvf_1$ and $\bvf^* R^{-1/2} = R^{-1/2}\bvf^*$, for the 
latter see Remark \ref{r:commutation}, we can write $A= R^{-1/2}  R_1\bvf_1 \bvf^* R^{1/2}$ and see 
that \eqref{e:prop of A} is satisfied. 

\subsection{The structure of \tp{$Q$}{Q}}
Recall that we defined $\cH_0$ as $\cH_0:=\cspn\{R^n p: n\ge 0\}$. 

\begin{lm}
\label{l:structure of Q}
The operator $Q$ with respect to the decomposition $\cH = \cH_0 \oplus \cH_0^\perp$ has the  
block diagonal structure 
%we have the following structure for $Q$:  
\begin{align}
\label{e:struct Q}
Q=\begin{pmatrix}
Q_{0} & 0 \\
0 &  I  \end{pmatrix}, 
\end{align}  
where $Q_0$ is a strict contraction \tup{(}i.e.~$\|Q_0 x\|<\|x\|$ for all $x\ne 0$\tup{)}. 
\end{lm}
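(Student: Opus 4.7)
The plan has three structural ingredients followed by one technical step where the main difficulty lies. First I would establish the block-diagonal structure of $Q$ with respect to the decomposition $\cH = \cH_0 \oplus \cH_0^\perp$. The key point is that both $\cH_0$ and $\cH_0^\perp$ are reducing for $R$ and $R_1$: the subspace $\cH_0 = \cspn\{R^n p : n \ge 0\}$ is $R$-invariant by definition, and since $pp^*$ maps $\cH$ into $\spn\{p\} \subset \cH_0$, the identity $R_1^2 = R^2 - pp^*$ shows that $R_1^2$, and hence $R_1$, preserves $\cH_0$ as well. Self-adjointness then transfers the invariance to $\cH_0^\perp$. Consequently $R^{1/2}$, $R_1^{1/2}$, and (via Remark~\ref{r:commutation}) the natural inverse $R^{-1/2}$ respect the decomposition, so $Q$—the continuous extension of $R_1^{1/2} R^{-1/2}$—is block diagonal.

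Second, on $\cH_0^\perp$ the identity $pp^* x = \langle x, p\rangle p = 0$ (for $x \perp p$) yields $R^2 = R_1^2$, and with both $R,R_1 \ge 0$ and $\ker R = \{0\}$, uniqueness of the non-negative square root gives $R = R_1$ on $\cH_0^\perp$; thus $R^{1/2} = R_1^{1/2}$ there and $Q|_{\cH_0^\perp} = I$.

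Third, for the strict contraction of $Q_0 := Q|_{\cH_0}$ the main tool is the algebraic identity
\[
R_0^{1/2}(I - Q_0^* Q_0) R_0^{1/2} = R_0 - R_{1,0},
\]
with $R_0 := R|_{\cH_0}$ and $R_{1,0} := R_1|_{\cH_0}$, obtained directly from $Q_0 R_0^{1/2} = R_{1,0}^{1/2}$ and its adjoint. This reduces strict contraction of $Q_0$ to the triviality of $\ker(R_0 - R_{1,0})$, which I would prove by a cyclicity-based induction: if $(R - R_1) z = 0$ with $z \in \cH_0$, then $\|R z\|^2 = \|R_1 z\|^2$ forces $\langle (R^2 - R_1^2) z, z\rangle = |\langle z, p\rangle|^2 = 0$, so $z \perp p$; hence $R^2 z = R_1^2 z$ and
\[
(R - R_1)(R z) = R^2 z - R_1 R z = R_1^2 z - R_1 R z = R_1(R_1 - R) z = 0,
\]
so $R z$ lies again in $\cH_0 \cap \ker(R - R_1)$. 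By iteration, $R^n z \perp p$ for every $n \ge 0$; self-adjointness of $R$ then gives $z \perp R^n p$ for all $n$, so $z \perp \cH_0$ and $z = 0$.

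The delicate step that I expect to be the main obstacle is closing the gap between $\ker(R_0 - R_{1,0}) = \{0\}$ and the full $\ker(I - Q_0^* Q_0) = \{0\}$: the key identity above only immediately yields $\ker(I - Q_0^* Q_0) \cap \ran R_0^{1/2} = \{0\}$, and $\ran R_0^{1/2}$ is merely a dense subspace of $\cH_0$. In the compact setting of this section, I would exploit the compactness of $R_0$ and of $R_0 - R_{1,0}$: for $y \in \ker(I - Q_0^* Q_0)$, approximate $y = \lim R_0^{1/2} z_n$, so that the key identity forces $(R_0 - R_{1,0}) z_n \to 0$; ensuring boundedness of $\{z_n\}$ (for instance via spectral truncation against the eigenbasis of $R_0$, where compactness enters) would allow extracting a weakly convergent subsequence $z_{n_j} \rightharpoonup z$, whence compactness of $R_0^{1/2}$ and $R_0 - R_{1,0}$ yield $R_0^{1/2} z = y$ and $(R_0 - R_{1,0}) z = 0$, forcing $z = 0$ and $y = 0$.
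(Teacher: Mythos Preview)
Your first two steps (block-diagonal structure and $Q|_{\cH_0^\perp}=I$) match the paper. Your identity $R_0^{1/2}(I-Q_0^*Q_0)R_0^{1/2}=R_0-R_{1,0}$ is correct, and the cyclicity argument showing $\ker(R_0-R_{1,0})=\{0\}$ is fine. The problem is exactly where you flag it: passing from this to $\ker(I-Q_0^*Q_0)=\{0\}$.

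Your proposed fix does not work. If $y\in\ker(I-Q_0^*Q_0)$ but $y\notin\ran R_0^{1/2}$, then \emph{any} sequence $z_n$ with $R_0^{1/2}z_n\to y$ must be unbounded: a bounded sequence would have a weakly convergent subsequence $z_{n_j}\rightharpoonup z$, and compactness of $R_0^{1/2}$ would give $R_0^{1/2}z_{n_j}\to R_0^{1/2}z$, forcing $y=R_0^{1/2}z\in\ran R_0^{1/2}$. Spectral truncation does not help, since truncating $y$ in the eigenbasis of $R_0$ and pulling back by $R_0^{-1/2}$ gives a bounded sequence if and only if $y\in\ran R_0^{1/2}$. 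So the argument is circular.

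The paper sidesteps this entirely by choosing a different identity. Instead of sandwiching $I-Q^*Q$ by $R^{1/2}$, it computes
\[
Q^*Q\,R\,Q^*Q = R^{-1/2}R_1^2R^{-1/2} = R - (R^{-1/2}p)(R^{-1/2}p)^*,
\]
and then runs a cyclicity argument \emph{directly on} $\cH_1:=\ker(I-Q^*Q)$: for $x\in\cH_1$ one gets $\langle Rx,x\rangle=\langle Rx,x\rangle-|\langle x,R^{-1/2}p\rangle|^2$, so $x\perp R^{-1/2}p$; feeding this back gives $Q^*Q(Rx)=Rx$, so $\cH_1$ is $R$-invariant, hence $R^{1/2}$-invariant, hence orthogonal to $R^{k/2}\cdot R^{-1/2}p$ for all $k\ge0$, i.e.\ orthogonal to $\cH_0$. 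This is exactly the same invariance-plus-cyclicity mechanism you used on $\ker(R-R_1)$, but applied to the correct subspace; it requires no approximation and no compactness.
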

The proof can be found in \cite{Treil Liang 2022}. For the reader's convenience we 
present it here. 
\begin{proof}[Proof of Lemma \ref{l:structure of Q}]
First, notice that $\cH_0$ and $\cH_0^\perp$ are invariant for both $R^2$ and $R_1^2$ and that 
$R^2$ and $R_1^2$ coincide on $\cH_0^\perp$. Therefore $\cH_0$ and $\cH_0^\perp$ are both invariant 
for $R^{1/2}$ and $R_1^{1/2}$ and $R^{1/2}$ coincides with $R_1^{1/2}$ on $\cH_0^\perp$, so the 
operator $Q$ has the block diagonal structure \eqref{e:struct Q}. Thus, to prove the lemma, we only 
need to show that $Q_0$ is a strict contraction.

Direct computations show that 
\begin{align}
\notag
Q^*QRQ^*Q & = R^{-1/2} R_1^2 R^{-1/2} = R^{-1/2} (R^2 - pp^*) R^{-1/2} 
\\  \label{e:Q*QRQ*Q}
&= R- R^{-1/2}p (R^{-1/2}p)^* 
\end{align}
(note that since $\|R^{-1} p\|\le 1$ we can conclude that $\|R^{-1/2} p\|^2 = \la R^{-1}p,p\ra \le 
\|p\|$, so $R^{-1/2}p \in\cH$). 

Take now $x\in\cH$ such that $\|Qx\|=\|x\|$. This is equivalent to the identity 
\begin{align*}
\la (I-Q^*Q)x, x\ra = 0, 
\end{align*}
and since $I-Q^*Q\ge 0$, is equivalent to $(I-Q^*Q)x=0$, or equivalently, that $Q^*Qx=x$. 

Using identity $Q^*Qx=x$ and \eqref{e:Q*QRQ*Q} we get 
\begin{align*}
\la Rx,x\ra=\la Q^*QRQ^*Q x,x\ra = \la Rx,x\ra -|\la x, R^{-1/2} p\ra|^2,  
\end{align*}
so $x\perp R^{-1/2}p$. Then 
\begin{align*}
Q^*QR x= Q^*QRQ^*Q x = Rx - R^{-1/2}p (R^{-1/2}p)^* x = Rx;
\end{align*}
the first equality here follows from $Q^*Qx=x$, the second one from \eqref{e:Q*QRQ*Q}, and the last 
one from the above orthogonality $x\perp R^{-1/2}p$. 

Therefore $Q^*QRx=Rx$, so $\|Q^*QRx\|=\|Rx\|$. Thus
\begin{align*}
\cH_1:=\{x\in\cH: \|Qx\|=\|x\|\} = \ker (I-Q^*Q)
\end{align*}
is an $R$-invariant subspace, orthogonal to $R^{-1/2}p$. 

Since $\cH_1$ is $R$-invariant, it is also $R^{1/2}$-invariant, so $\cH_1$ is orthogonal to 
\begin{align*}
\cspn\{ R^{-1/2 + k/2} : k\ge 0\} \subset \cspn\{ R^{ k/2} : k\ge 0\} \supset \cH_0
\end{align*}
(in fact, we have equalities above, not just inclusions, but this is not necessary for the proof), 
so $\|Qx\|<\|x\|$ for any $x\in \cH_0\setminus\{0\}$. But this exactly means that 
$Q_0=Q\big|_{\cH_0}$ is a 
strict contraction. 
\end{proof}

\subsection{Weak asymptotic stability of a completely non-unitary contraction}
\label{s:CNU is WAS}
Recall that a contraction $T$ is called \emph{completely non-unitary}, if there is no reducing 
subspace for $T$ on which $T$ acts unitarily. 

\begin{lm}
\label{l:cnu implies weakly stability}
A completely non-unitary contraction $T$  on a Hilbert space $\cH$ is always weakly 
asymptotically stable.
\end{lm}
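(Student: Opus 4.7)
The plan is to reduce weak asymptotic stability of $T$ to absolute continuity of an associated spectral measure on $\T$, and then invoke the Riemann--Lebesgue lemma.

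First, I would invoke Sz.-Nagy's unitary dilation theorem to obtain a minimal unitary dilation $U$ of $T$ on a larger Hilbert space $\cK\supset\cH$, so that $T^n = P\ci\cH U^n\big|\ci\cH$ for all $n\ge 0$. The spectral theorem for $U$ then produces, for each $x, y \in \cH$, a complex Borel measure $\mu_{x,y}$ on $\T$ satisfying
\[
\la T^n x, y\ra = \la U^n x, y\ra\ci\cK = \int_\T \xi^n\, d\mu_{x,y}(\xi), \qquad n\ge 0.
\]
If each $\mu_{x,y}$ is absolutely continuous with respect to normalized Lebesgue measure $m$ on $\T$, the Riemann--Lebesgue lemma immediately gives $\la T^n x, y\ra \to 0$ as $n\to\infty$, which is exactly weak asymptotic stability.

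Thus the key step is to verify $\mu_{x,y}\ll m$ for all $x, y\in\cH$. By polarization it is enough to treat the diagonal measures $\mu_{x,x}$. I would introduce
\[
\cH_s := \{ x\in\cH : \mu_{x,x} \text{ is singular with respect to } m \},
\]
and prove that $\cH_s$ is a closed subspace of $\cH$, it reduces $T$, and the restriction $T\big|_{\cH_s}$ is unitary. The c.n.u.\ hypothesis then forces $\cH_s = \{0\}$, so $\mu_{x,x}\ll m$ for every $x\in \cH$, and the conclusion follows.

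The main obstacle is verifying that $T\big|_{\cH_s}$ is unitary; this is the classical content of the Sz.-Nagy--Foias theorem on the spectral type of the minimal unitary dilation of a completely non-unitary contraction. A self-contained argument uses the strong limits $A := \text{s-lim}_{n\to\infty} T^{*n}T^n$ and $A_* := \text{s-lim}_{n\to\infty} T^n T^{*n}$, which exist because the sequences are monotone decreasing and bounded by $I$, and relates the kernels of $I - A$ and $I - A_*$ to the maximal subspaces on which $T$ acts as an isometry, respectively a coisometry; both of these subspaces contain $\cH_s$, and their intersection is exactly the maximal reducing subspace on which $T$ is unitary, which is $\{0\}$ by assumption.
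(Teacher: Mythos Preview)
Your approach is correct and is essentially the same idea as the paper's, just packaged differently. Both arguments pass to the minimal unitary dilation and then apply a Riemann--Lebesgue type statement; the paper does this via the Sz.-Nagy--Foias \emph{functional model}, where the model space $\cK_\theta$ is already a subspace of a vector-valued $L^2(\T,m;E)$ and the dilation is the multiplication operator $M_z$, so $M_z^n\to 0$ in W.O.T.\ is immediate. You instead use the abstract minimal unitary dilation $U$ and then need the extra input that the spectral measures $\mu_{x,x}$ for $x\in\cH$ are absolutely continuous---which is exactly the classical Sz.-Nagy--Foias theorem you cite. The two routes are logically equivalent (the functional model is precisely the concrete realization of the absolutely continuous dilation), but the paper's route is shorter because it bypasses any separate discussion of spectral type.

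One caution about your final paragraph: the ``self-contained'' sketch you offer does not actually close the loop. The crucial inclusion $\cH_s\subset \ker(I-A)\cap\ker(I-A_*)$ is asserted but not justified, and this inclusion is the entire content of the theorem---there is no cheap way to see that singularity of $\mu_{x,x}$ forces $\|T^n x\|=\|x\|=\|T^{*n}x\|$ for all $n$ without already knowing the structure of the dilation. The standard proof of this fact goes through the functional model (or equivalently through the Wold-type decomposition of the residual and $*$-residual parts of the dilation), so your detour does not really avoid the machinery the paper invokes. If you want to cite the absolute continuity of the minimal dilation of a c.n.u.\ contraction as a black box, that is fine; but the sketch as written should not be presented as a self-contained substitute.
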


We will show later in Section \ref{s:A is CNU} that the contraction $A$ constructed above in 
Section \ref{s:A constr} is weakly asymptotically stable, which will complete the proof of 
asymptotic stability of $\Sigma^*$. 

\begin{proof}[Proof of Lemma \ref{l:cnu implies weakly stability}]
Every completely non-unitary contraction $T$ admits the  \emph{functional mod\-el}, i.e. it 
is
unitaryly equivalent to the model operator $\cM_\theta$ on the model space $\cK_\theta$, where
$\theta$ is the so-called \emph{characteristic function} of $T$, see for example \cite[Sect.\ 
VI.2]{Harmonic analysis 2010}, \cite[Ch.~1, Sect.~1.3]{Nikolski2}. Without going into details, which
are not important for our purposes, we just mention that the model space $\cK_\theta$ is a subspace
of a vector-valued space $L^2(E)=L^2(\T,  m;E)$ of square integrable (with respect to the normalized
Lebesgue measure $m$ on $\T$) functions with values in an auxiliary Hilbert space $E$. The model
operator $\cM_\theta$, to which $A$ is unitarily equivalent,  is just the \emph{compression} of the
multiplication operator$M_z$ by the independent variable $z$
\begin{align*}
\cM_\theta f = P\ci{\cK_\theta} M_z f, \qquad f\in \cK_\theta ;
\end{align*}
recall that  the  multiplication operator $M_z$ is defined by $M_z f(z) = zf(z)$, $z\in\T$. 

What is also essential for our purposes, is that the multiplication operator $M_z$ is the 
\emph{dilation} of the model operator $\cM_\theta$, i.e.~that for all $n\ge 1$
\begin{align*}
\cM_\theta^n f = P\ci{\cK_\theta}M_z^n f, \qquad f\in \cK_\theta. 
\end{align*}

Since trivially $M_z^n \to 0$ in the weak operator topology of $B(L^2(\T, m;E))$ as $n\to+\infty$, 
we conclude that $\cM_\theta^n\to 0$ as $n\to +\infty$ in  the weak operator topology of 
$B(\cK_\theta)$, and so $T^n\to 0$ in the weak operator topology as well. 
\end{proof}

\subsection{\tp{$A$}{A} is completely non-unitary}
\label{s:A is CNU}

\begin{prop}
\label{p:contraction is cnu}
The contraction $A$ defined above in Section\ref{s:A constr} is completely non-unitary if and only 
if $\bvf_1 \bvf^*$ does not have 
any non-zero reducing subspace  $E\subset \cH_0^{\perp}$ such that $\bvf^* E\perp \cH_0$.
% If $\varphi_1 \varphi^*$ does not have any reducing subspace belongs to $\cH_0^{\perp}$, then $A$ 
%is c.n.u.
\end{prop}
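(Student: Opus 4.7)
The strategy is to identify the maximal reducing subspace $M$ of $A$ on which $A$ acts as a unitary operator, and show that it coincides with the largest subspace $E\subset\cH_0^\perp$ reducing for $\bvf_1\bvf^*$ with $\bvf^*E\perp\cH_0$. The two main tools are the block form $Q = Q_0\oplus I$ from Lemma~\ref{l:structure of Q}, and the observation that strict contractivity of $Q_0$ on $\cH_0$ gives
\[
\ker(I-Q^*Q)=\cH_0^\perp, \qquad \ker(I-QQ^*) = \ker(I-Q_0Q_0^*)\oplus\cH_0^\perp,
\]
with $\ker(I-Q_0Q_0^*)\subset\cH_0$.

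\textbf{Forward direction.} Start with a non-zero reducing subspace $M$ on which $A$ is unitary. Applying the chain of contraction inequalities implicit in $A=Q^*\bvf_1 Q\bvf^*$ to $\|Ax\|=\|x\|$ forces, at each step of equality, $\bvf^*x\in\ker(I-Q^*Q)=\cH_0^\perp$, so $\bvf^*M\subset\cH_0^\perp$; the analogous chain for $A^* = \bvf Q^*\bvf_1^*Q$ yields $Qx=x$, hence $M\subset\cH_0^\perp$. Consequently on $M$ the inner $Q$ and $Q^*$'s act as the identity, and $Ax = Q^*(\bvf_1\bvf^*x)$. Decomposing $\bvf_1\bvf^*x = w_0+w_1$ with $w_0\in\cH_0$, $w_1\in\cH_0^\perp$, the block form gives $Ax = Q_0^*w_0 + w_1$; the invariance $Ax\in M\subset\cH_0^\perp$ forces $Q_0^*w_0=0$, while the norm-preservation forces $w_0\in\ker(I-Q_0Q_0^*)$, i.e.\ $\|Q_0^*w_0\|=\|w_0\|$. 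Hence $w_0=0$, so $\bvf_1\bvf^*M\subset\cH_0^\perp$ and $A|_M = \bvf_1\bvf^*|_M$. Running the same argument on $A^*$ (using $A^*M\subseteq M\subset\bvf\cH_0^\perp$) gives $\bvf\bvf_1^*M\subset M$, so $M$ is reducing for $\bvf_1\bvf^*$. Setting $E:=M$ finishes this direction.

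\textbf{Reverse direction.} Given $E\subset\cH_0^\perp$ reducing for $\bvf_1\bvf^*$ with $\bvf^*E\subset\cH_0^\perp$: for $x\in E$, $\bvf^*x\in\cH_0^\perp$ gives $Q\bvf^*x=\bvf^*x$, and $\bvf_1\bvf^*x\in E\subset\cH_0^\perp$ gives $Q^*(\bvf_1\bvf^*x)=\bvf_1\bvf^*x$, so $Ax=\bvf_1\bvf^*x$ and $AE\subseteq E$. Analogously, since $E$ reducing for $\bvf_1\bvf^*$ implies $\bvf\bvf_1^*E\subseteq E$, we have $\bvf_1^*x = \bvf^*(\bvf\bvf_1^*x)\in\bvf^*E\subset\cH_0^\perp$, so $Q^*\bvf_1^*x=\bvf_1^*x$ and $A^*x=\bvf\bvf_1^*x\in E$. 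Thus $E$ is reducing for $A$ and $A|_E=\bvf_1\bvf^*|_E$ is unitary, so $A$ is not completely non-unitary.

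The main technical point, and the place where the strict contractivity of $Q_0$ is essential, is the step where $A$-invariance $Ax\in M\subset\cH_0^\perp$ forces the $\cH_0$-component of $\bvf_1\bvf^*x$ to vanish. Once that is in hand, everything else reduces to straightforward bookkeeping with the block structure of $Q$.
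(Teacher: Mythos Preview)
Your proof is correct and follows essentially the same approach as the paper's: use the block structure $Q=Q_0\oplus I$ and the strict contractivity of $Q_0$ on $\cH_0$ to force any unitary reducing subspace of $A$ into $\cH_0^\perp\cap\bvf\cH_0^\perp$, then identify $A$ with $\bvf_1\bvf^*$ there. One small difference worth noting: where the paper invokes Lemma~\ref{l:T^* strict} (that $Q_0^*$ is also a strict contraction) to conclude $\bvf_1\bvf^*E\perp\cH_0$, you instead combine the invariance condition $Q_0^*w_0=0$ with the norm condition $\|Q_0^*w_0\|=\|w_0\|$ to get $w_0=0$ directly, which is slightly slicker and bypasses that lemma; and where the paper appeals to Lemma~\ref{equivalent condition of cnu} to pass from ``$\bvf_1\bvf^*$ isometric on $E$ with $\bvf_1\bvf^*E=E$'' to ``$E$ reducing for $\bvf_1\bvf^*$'', you run the symmetric argument for $A^*$ to obtain $\bvf\bvf_1^*M\subset M$ directly. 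Both routes are equally valid.
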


To prove this proposition we need the following simple observation. 
\begin{lm}
\label{equivalent condition of cnu}
Let $T$ be a contraction and $E$ be a subspace. The following statements are equivalent:
\begin{enumerate}
	\item The subspace $E$ is a reducing subspace for $T$ such that $T\big|_{E}$ is unitary;
	\item The operator $T$ acts isometrically on $E$ (i.e.~$\|Tx\|=\|x\|$ for all $x\in E$) and 
	$TE=E$. 
\end{enumerate}
\end{lm}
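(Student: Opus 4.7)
The plan is to prove the two directions separately; direction (i)$\Rightarrow$(ii) is immediate, while (ii)$\Rightarrow$(i) requires one small observation about contractions.

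For (i)$\Rightarrow$(ii): if $E$ reduces $T$ and $T\big|_E$ is unitary, then in particular $T\big|_E$ is an isometry, which is precisely $\|Tx\|=\|x\|$ for all $x\in E$, and $T\big|_E$ is surjective, which gives $TE=E$. Nothing more to say.

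For (ii)$\Rightarrow$(i), the hypothesis $TE=E$ already gives that $E$ is $T$-invariant, so the only work is to show that $E$ is $T^*$-invariant. The key observation is the following standard fact about contractions: since $\|T\|\le 1$, the operator $I-T^*T$ is positive, and for any $x\in E$ the isometry condition gives
\[
\langle (I-T^*T)x,x\rangle = \|x\|^2-\|Tx\|^2=0,
\]
so positivity forces $(I-T^*T)x=0$, i.e.~$T^*Tx=x$ for every $x\in E$. Now take any $y\in E$. Since $TE=E$, we can write $y=Tx$ with $x\in E$, and then $T^*y=T^*Tx=x\in E$. Thus $T^*E\subset E$, which together with $TE\subset E$ says that $E$ reduces $T$. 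Finally, $T\big|_E$ is an isometry mapping $E$ onto $E$, hence unitary on $E$.

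The main (and only) obstacle is noticing that the isometry condition on $E$ combined with the contraction property of $T$ forces $T^*T=I$ on $E$; once that identity is in hand, $T^*$-invariance of $E$ is just a one-line consequence of $TE=E$. The argument is entirely elementary and uses no structure beyond the polarization/positivity of $I-T^*T$.
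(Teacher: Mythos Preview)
Your proof is correct. The paper omits the proof entirely, calling it trivial, so your argument fills in exactly the kind of standard reasoning the authors had in mind; the key step---using positivity of $I-T^*T$ to get $T^*T=I$ on $E$ and then deducing $T^*$-invariance from $TE=E$---is the natural way to do it.
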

The proof of this lemma is trivial, and we omit it. 

\begin{lm}
\label{l:T^* strict}
If $T$ is a strict contraction, then $T^*$ is also a strict contraction. 
\end{lm}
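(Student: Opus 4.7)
The plan is to argue by contradiction. Suppose $T^*$ is not a strict contraction, so that there exists $y\neq 0$ with $\|T^*y\|=\|y\|$. The strategy is to produce a nonzero vector on which $T$ acts isometrically, contradicting the hypothesis that $T$ is a strict contraction.

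First I would rewrite the equality $\|T^*y\|^2 = \|y\|^2$ as
\begin{align*}
\la (I-TT^*)y,y\ra = 0 .
\end{align*}
Since $T$ is a contraction we have $\|T^*\|=\|T\|\le 1$, so $I-TT^*\ge 0$. The standard fact that a nonnegative self-adjoint operator $A$ satisfies $\la Ay,y\ra=0$ only when $Ay=0$ (via Cauchy--Schwarz applied to $A^{1/2}$) then yields $TT^*y=y$.

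Next I would set $x:=T^*y$. On the one hand $\|x\|=\|T^*y\|=\|y\|>0$, so $x\ne 0$. On the other hand,
\begin{align*}
Tx = TT^*y = y,\qquad \text{so}\qquad \|Tx\|=\|y\|=\|x\|.
\end{align*}
This contradicts the assumption that $T$ is a strict contraction. I do not expect any real obstacle here: the whole argument is two lines once the positive-operator inequality is invoked, and it does not rely on any deeper structure of $T$ (in particular it works in the pointwise sense of strict contraction used in this paper, not only in the uniform sense $\|T\|<1$).
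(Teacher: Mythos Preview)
Your argument is correct and complete. It differs from the paper's proof, which instead uses the polar decomposition $T=U|T|$: since $\|\,|T|x\,\|=\|Tx\|<\|x\|$ for $x\ne0$, the modulus $|T|$ is a strict contraction, and then $T^*=|T|U^*$ is a strict contraction because for $x\ne0$ either $U^*x=0$ (and then $T^*x=0$) or $U^*x\ne0$ (and then $\|T^*x\|=\||T|U^*x\|<\|U^*x\|\le\|x\|$). Your route avoids polar decomposition altogether, relying only on the fact that $\la Ay,y\ra=0$ forces $Ay=0$ for $A\ge0$; this is a bit more self-contained and makes the logical structure transparent. The paper's approach, on the other hand, ties the lemma to the ambient theme of polar decompositions used throughout the paper, and is a one-liner once that machinery is in place.
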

\begin{proof}
If $T$ is a strict contraction, then $|T|$ is also a strict contraction. Using the polar 
decomposition $T=U|T|$, we see that $T^* = |T|U^*$ is also a strict contraction. 
\end{proof}

\begin{proof}[Proof of Proposition \ref{p:contraction is cnu}]
Assume that %$A$ is c.n.u, and 
$E\subset \cH_0^\perp$ is a reducing   subspace for $\bvf_1\bvf^*$ such that $\bvf^* E 
\subset \cH_0^\perp$.

Since $Q =Q^*= I$ on $\cH_0^{\perp}$, we have for any $x \in E$, 
\begin{align}
\label{e:Ax=phi_1 phi* x}
A x = Q^* \bvf_1 Q \bvf^* x = Q^* \bvf_1 \bvf^* x = \bvf_1 \bvf^* x;
\end{align}
in the second equality we used the fact that $\bvf^* x \in \cH_0^\perp$, and in the last one the 
fact that $\bvf_1\bvf^* x \in E\subset \cH_0^\perp$. 

Since $\bvf_1$ acts isometrically on $\cH_0^{\perp}$ ($\Ker \bvf_1$ can only belongs to 
$\cH_0$), we have
\begin{align*}
\| A x \| = \| \bvf_1 \bvf^* x \| = \| x \|,
\end{align*}
i.e. $A$ acts isometrically on $E$. 

Also, this implies that $\bvf_1\bvf^*$ acts isometrically on $E$. 
In addition, $\bvf$ is unitary, and (see Remark \ref{varphi 1 is partial unitary}) we know 
$\bvf_1\big|_{(\Ker R_1)^{\perp}} $  is unitary, therefore $\bvf\bvf_1^*$ acts unitarily 
on its reducing subspace $E$. Now from \eqref{e:Ax=phi_1 phi* x} we have
\begin{align*}
AE = \bvf_1 \bvf^* E = E.
\end{align*}
Applying Lemma \ref{equivalent condition of cnu},  we can see that $E$ is a reducing subspace for 
$A$ such that $A\big|_{E}$ is unitary, so $A$ is not completely non-unitary.

Now let us prove the opposite implication. If $A$ is not completely non-unitary, then we can find a 
reducing subspace 
$E$ for $A$, such that $A\big|_{E} = Q^* \bvf_1 Q \bvf^* \big|_{E}$ is unitary. Using the fact that 
$Q$
is a pure contraction on $\cH_0$, and that $\|A^*x\|=\|\bvf Q^* \bvf_1^* Q x\|\le \|Qx\|$, we 
conclude that $E\perp\cH_0$. Similarly, since $\|Ax\|=\|Q^*\bvf_1 Q\bvf^*x\|\le \|Q\bvf^*x\|$, 
\begin{align*}
&\bvf^* E \perp \cH_0. 
% \qquad \bvf_1 \bvf^* E \subset \cH_0^{\perp}; \\
%&A E = \bvf_1 \bvf^* E = E.
\end{align*}
Since $Q\big|_{\cH_0^\perp} = I\ci{\cH_0^\perp}$, we have for $x\in E$
\begin{align*}
Ax = Q^*\bvf_1 Q\bvf^* x = Q^* \bvf_1 \bvf^* x,   
\end{align*}
By Lemma \ref{l:T^* strict} the operator $Q^*\big|_{\cH_0} = Q_0^*$ is a strict contraction, and 
since 
$A\big|_E$ is unitary, we see that $\bvf_1\bvf^* E \perp \cH_0$, and that 
\begin{align*}
A\big|_E = \bvf_1\bvf^* \big|_E. 
\end{align*}
Therefore $E$ is a reducing subspace for $\bvf_1\bvf^*$.  
\end{proof}

To complete the proof that $A$ is completely non-unitary, we will show that that there is no 
non-trivial reducing subspace $E\perp\cH_0$ of $\bvf_1\bvf^*$. 

Let $E$ be a reducing subspace of $\bvf_1\bvf^*$, $E\perp\cH_0$, and let $f\in\cH$. Then 
\begin{align}
\label{e:non-cyclic 01}
(\bvf_1\bvf^*)^n f \perp \cH_0 \qquad \forall n\in \Z. 
\end{align}
Let us decompose $f$ as 
\begin{align*}
f &=\sum_k f_{\lambda_k} + \sum_k f_{\mu_k}, \\ 
f_{\lambda_k} &= P\ci{\ker(R-\lambda_k I)}f = P\ci{\ker(R_1-\lambda_k I)}f, \\
f_{\mu_k} &= P\ci{\ker(R_1-\mu_k I)}f = P\ci{\ker(R-\mu_k I)}f.
\end{align*}
Taking into account \eqref{e:canonical phi} we can see that the  condition \eqref{e:non-cyclic 01} 
is equivalent to the fact that for all $k$ and for all $n\in Z$
\begin{align*}
\la \bvf^n f_{\lambda_k} , p_k\ra = \la  f_{\lambda_k} , \bvf^{-n} p_k\ra =0,   \quad
\la \bvf_1^n f_{\mu_k} , p^1_k\ra = \la  f_{\mu_k} , \bvf_1^{-n} p^1_k\ra =0\qquad 
\forall n\in \Z ,
%\intertext{or, equivalently}
%f_{\lambda_k}  &\perp \bvf^n p_k, & f_{\mu_k} &\perp  \bvf_1^n p^1_k  &\forall n\in \Z . 
\end{align*}
which is impossible because vectors $p_k$ and $p_k^1$ are $*$-cyclic for 
$\bvf\big|_{\ker(R-\lambda_k I)}$ and $\bvf_1\big|_{\ker(R_1-\mu_k I)}$ respectively. \hfill\qed

\section{Relation to previous work}
\label{s:previous work}

\subsection{Quick recap of Clark theory}
\label{s:Clark recap}

Let us quickly describe basic facts from Clark theory, \cite{Clark}, that we will use. 

Recall that a function $\theta\in H^2$ is called \emph{inner}, if $|\theta|=1$ a.e.~on $\T$. 

For an inner function $\theta$,  the  \emph{model space} $\cK_\theta$ is defined as  $\cK_\theta := 
H^2\ominus\theta H^2$, and the \emph{model operator} $\cM_\theta:\cK_\theta \to\cK_\theta$ is the 
compression of the forward shift
\begin{align*}
\cM_\theta f := P\ci{\cK_\theta} Sf, \qquad f\in \cK_\theta. 
\end{align*}

We consider only the simplest version of the theory when $\theta$ is an inner function satisfying 
$\theta(0)=0$, although in \cite{Clark} the case of general inner functions was treated. 

If $\theta(0)=0$, then $z^0, \theta\in \cK_\theta$, and one can consider a rank one unitary 
perturbation $\cU=\cU_\theta$ of $\cM_\theta$ (i.e. a unitary operator $\cU$ such that 
$\rank(\cM_\theta-\cU)=1$), 
\begin{align*}
\cU f = \left\{
\begin{array}{ll} \cM_\theta f = Sf \qquad & f\in \cK_\theta \ominus \theta; \\
				   z^0, & f=\theta	
\end{array}\right.
\end{align*}

The Clark measure $\rho=\rho_\theta$ in this case is the spectral measure of the operator 
$\cU_\theta$ with 
respect to the 
vector $z^0$. Alternatively, it can be defined as the Borel measure such that 
\begin{align*}
\re \frac{1+\theta}{1-\theta} = \text{Poisson extension of }\rho, 
\end{align*}
in the unit disc $\D$, or, equivalently, 
\begin{align*}
\frac{1+\theta(z)}{1-\theta(z)} = \int_\T \frac{1+z\overline \xi}{1-z\overline \xi} \dd\rho(\xi), 
\qquad z\in \D. 
\end{align*}
For an inner function the Clark measure is always  singular, and if $\theta(0) =0$, then $\rho$ is 
a probability measure.  Also, if $\theta$ is a finite Blaschke product, then $\rho$ is a measure
with finite support 
\begin{align*}
\rho =\sum_{k=1}^n a_k \delta_{\xi_k}, \qquad n = \deg \theta. 
\end{align*}

The Clark operator $\cC=\cC_\theta : \cK_\theta \to L^2(\rho)$, $\rho=\rho_\theta$ is a unitary 
operator, intertwining $\cU_\theta$ and the multiplication operator $M_\rho: L^2(\rho)\to
L^2(\rho)$, 
\begin{align*}
\cC \cU_\theta = M_\rho \cC, 
\end{align*}
is given by the formula 
\begin{align}
\label{e:Clark 01}
\cC f(\xi) = \lim_{r\to 1^-} f(r\xi), \qquad f\in\cK_\theta. 
\end{align}
It was shown in \cite{Clark} that convergence in \eqref{e:Clark 01} can be understood in the sense 
of convergence in $L^2(\rho)$, and it was later shown in \cite{Polt93} that convergence in 
\eqref{e:Clark 01} is $\rho$-a.e., and that this even holds for non-tangential boundary values.  
However, the reader does not need to worry about it, because in the case that we are interested in, 
when $\theta$ is a finite Blaschke product, the subspace $\cK_\theta$ consists of rational 
functions,  so there is no question about convergence. 

Note also that $\cC z^0 =\1\in L^2(\rho)$.

\subsection{Description of previous results}
To describe the previous results we will treat the Hankel operator as an operator in the Hardy space 
$H^2$. As it was mentioned in the Introduction,  $z$-transform gives the unitary equivalence 
between representation in $\ell^2$ and $H^2$; so in this section we will treat Haknel operators as 
operators in $H^2$, but will keep the same notation. 

In \cite{Gerard 2014 paper 2}, the authors considered the conjugate-linear Hankel operator $H_u$, $u\in H^2$, 
\begin{align*}
H_u (f) = P_+ (u\bar f), \qquad f\in H^2;
\end{align*}
where $P_+$ is the Szeg\"{o} projection, i.e.~the orthogonal projection from $L^2 = L^2(\T)$ onto 
$H^2$. Note that $H_u z^0 = u$. 

Note that in our notation $H_u = \fC\Gamma  = \Gamma^* \fC$. In our notation we defined $u=\Gamma^* 
e_0$, so vectors $u$ in both cases agree. 

They also introduced the operator $K_u=H\ci{S^*u} = S^* H_u = \fC\Gamma_1 = \Gamma_1 \fC^*$. We 
should note here that $H_u^2 = \Gamma^*\Gamma$ and $K_u^2 = \Gamma_1^*\Gamma_1$.  

The following result was  obtained in \cite{Gerard 2014 paper 2, GGAst, GerPush2020} for the case 
of compact operators. 
%, see \cite{GerPush2020, GerPush2021} for the general case. 
Below we will follow \cite{GerPush2020}. 

Let a compact Hankel operator $\Gamma$ be fixed, and let $H_u=\fC\Gamma$, $K_u = \fC\Gamma_1$ be 
the corresponding 
conjugate linear Hankel operators. For $s>0$ denote 
\begin{align*}
E(s) := \ker (|\Gamma| - s I) = \ker (H_u^2 -s^2 I), \quad E_1(s)  := \ker (|\Gamma_1| - s I) 
= \ker (K_u^2 -s^2 I);
\end{align*}
in \cite{GerPush2020} the notation $H_u(s)$, $K_u(s)$ was used. 

It is well known, see \cite[Lemma 2.1]{GerPush2020}, that if $s$ is an eigenvalue of either 
$|\Gamma|$ or $|\Gamma_1$, then one (and only one) of the following properties holds:

\begin{enumerate}
\item $u\not\perp E(s)$ and $E_1 (s) = E(s)\cap u^\perp$; 
\item $u\not\perp E_1(s)$ and $E (s) = E_1(s)\cap u^\perp$.  
\end{enumerate}

\begin{thm}
\label{t:H_u rf}
Let $s$ be an eigenvalue of either $|\Gamma|$ or $|\Gamma_1|$. 
\begin{enumerate}
\item Let $u\not\perp E(s)$; denote by $r_s$ and $u_s$ the orthogonal projection of $z^0$ 
(respectively of $u$) onto $E(s)$. Then $u_s(z) = s \bar z \theta_s(z) r_s(z)$ for an inner 
function $\theta_s$, $\theta_s(0)=0$.  The normalized function $\|r_s\|^{-1} r_s$ is an 
isometric multiplier from $\cK_{\theta_s} $ to $ r_s \cK_{\theta_s} $, and 
\begin{align}\label{e:E(s)} 
E(s) = r_s \cK_{\theta_s}. 
\end{align}
The action of 
$H_u$ on $E(s)$ is given by 
\begin{align}
\label{e:H_u uf}
\left(H_u r_s f\right) (z) = s  r_s(z) \theta_s(z)\overline z \overline{ f(z)}   , \qquad \forall 
f\in \cK_{\theta_s},   
\end{align}
and the inner function $\theta_s$ is uniquely defined by \eqref{e:E(s)} and \eqref{e:H_u uf}. 

\item Let $u\not\perp E_1(s)$; denote by $u^{1}_s$  the orthogonal projection of $u$ 
 onto $E_1(s)$. Then $\left(K_u u^{1}_s\right)(z) = s \bar z \theta^1_s(z) u^{1}_s(z)$ for an inner 
function $\theta^{1}_s$, $\theta^{1}_s(0)=0$.  The normalized function $\|u^{1}_s\|^{-1} u^{1}_s$ 
is an isometric multiplier from $\cK_{\theta^{1}_s} $ to $ u^{1}_s \cK_{\theta^{1}_s} $ and 
\begin{align}\label{e:E_1(s)} 
E_1(s) = u^{1}_s \cK_{\theta^{1}_s}. 
\end{align}
The action of 
$K_u$ on $E_1(s)$ is given by 
\begin{align}
\label{e:K_u uf}
\left(K_u u^{1}_s f\right) (z) = s  u^{1}_s(z) \theta^{1}_s(z)\overline z \overline{ f(z)}   , 
\qquad \forall 
f\in \cK_{\theta_s},   
\end{align}
and the inner function $\theta^{1}_s$ is uniquely defined by \eqref{e:E_1(s)} and \eqref{e:K_u uf}. 
\end{enumerate}
\end{thm}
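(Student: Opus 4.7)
The plan is to derive Theorem~\ref{t:H_u rf} by combining the restricted polar-type decomposition of $H_u$ on the eigenspace $E(s)$ (using the framework of Sections~\ref{s: Abstract ISP}--\ref{s:ASD compact}) with the Clark-theory realization from Section~\ref{s:Clark recap}. I sketch the argument for part~(1); part~(2) is entirely analogous after swapping the roles of $H_u,\Gamma,z^0$ with $K_u,\Gamma_1,u$.

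First, I would establish that $E(s)$ is invariant for $H_u$ and deduce the basic identities $H_u r_s = u_s$ and $H_u u_s = s^2 r_s$. Invariance follows because $H_u$ commutes conjugate-linearly with $H_u^2 = |\Gamma|^2$. The conjugate-symmetry $\langle H_u f,g\rangle=\langle H_u g,f\rangle$ (equivalent to $\Gamma$ being $\fC$-symmetric) forces $H_u P\ci{E(s)} = P\ci{E(s)} H_u$, so from $u = H_u z^0$ one obtains $H_u r_s = P\ci{E(s)} u = u_s$ and thence $H_u u_s = s^2 r_s$. Restricting the decomposition $H_u = \Phi\fJ_u|\Gamma|$ of \eqref{e:PolDec01b} to $E(s)$, where $|\Gamma|\equiv sI$, gives $H_u|_{E(s)} = s\Phi_s J_s$ with $\Phi_s := \Phi|_{E(s)}$ unitary, $J_s := \fJ_u|_{E(s)}$ a conjugation (both preserve $E(s)$ since they commute with $|\Gamma|$), and $\Phi_s$ being $J_s$-symmetric. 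A short check using $\fJ_u u = u$ shows $J_s u_s = u_s$, which combined with the above gives $\Phi_s u_s = s r_s$ and the norm identity $\|u_s\| = s\|r_s\|$. By Lemma~\ref{l:cyclicity 01} applied to the abstract spectral data of $\Gamma$, the vector $u_s$, and hence also $r_s = \Phi_s u_s/s$, is $*$-cyclic for $\Phi_s$.

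Now let $\rho_s$ be the spectral measure of the compact unitary $\Phi_s$ with respect to the unit $*$-cyclic vector $r_s/\|r_s\|$: it is a finitely supported probability measure on $\T$. By Clark theory there is a unique inner function $\theta_s$ with $\theta_s(0)=0$ whose Clark measure equals $\rho_s$; composing the Clark unitary $\cC_{\theta_s}:\cK_{\theta_s}\to L^2(\rho_s)$ with the spectral isomorphism produces a unitary $V_s:\cK_{\theta_s}\to E(s)$ satisfying $V_s z^0 = r_s/\|r_s\|$ and $V_s\cU_{\theta_s} = \Phi_s V_s$. The \emph{main obstacle} is to identify this abstract unitary $V_s$ with the concrete multiplication map $f \mapsto (r_s/\|r_s\|)\cdot f$ from $\cK_{\theta_s}$ into $H^2$. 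My plan is to use the reproducing-kernel structure: $r_s=P\ci{E(s)}z^0$ is the reproducing kernel of $E(s)\subset H^2$ at $0$ (since $\langle f,r_s\rangle=f(0)$ for $f\in E(s)$), while $z^0$ is the reproducing kernel of $\cK_{\theta_s}$ at $0$ (using $\theta_s(0)=0$). Combined with the Clark boundary-value description \eqref{e:Clark 01} and the intertwining $V_s\cU_{\theta_s}=\Phi_s V_s$, one verifies the multiplication identity on the $*$-cyclic spanning set $\{\cU_{\theta_s}^n z^0 : n\in\Z\}$ and extends by linearity.

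Once $V_s$ is identified as multiplication by $r_s/\|r_s\|$, both the isometric-multiplier property and the equality $E(s) = r_s\cK_{\theta_s}$ are immediate. The action formula \eqref{e:H_u uf} follows by transporting $H_u|_{E(s)} = s\Phi_s J_s$ through $V_s$: the conjugation $J_s$, which is determined uniquely by Lemma~\ref{l:uniq Conj 01} from the data ($\Phi_s$ is $J_s$-symmetric, $J_s u_s = u_s$), becomes the canonical conjugation $f\mapsto \bar z \theta_s \bar f$ on $\cK_{\theta_s}$, while $\Phi_s$ becomes $\cU_{\theta_s}$; the explicit formula for $H_u$ on $r_s\cK_{\theta_s}$ then drops out. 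Specializing \eqref{e:H_u uf} to $f = z^0$ recovers $u_s = s\bar z \theta_s r_s$, and uniqueness of $\theta_s$ from \eqref{e:E(s)} and \eqref{e:H_u uf} follows because any candidate inner function must give rise to the same Clark measure $\rho_s$.
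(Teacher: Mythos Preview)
The paper does not prove this theorem: it is quoted from \cite{Gerard 2014 paper 2, GGAst, GerPush2020} as a known structural result about Schmidt subspaces of Hankel operators, and the paper then uses it (in Section~\ref{s:previous work}) to translate between the inner functions $\theta_s,\theta^1_s$ of G\'erard--Grellier and the spectral-measure description developed here. So there is no ``paper's own proof'' to compare against; the logical flow in the paper runs in the opposite direction from what you attempt.

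On the merits of your sketch, the step you yourself flag as the ``main obstacle'' is a genuine gap. Up to the point where you produce the abstract unitary $V_s:\cK_{\theta_s}\to E(s)$ intertwining $\cU_{\theta_s}$ with $\Phi_s$ and sending $z^0$ to $r_s/\|r_s\|$, everything is fine and uses only the abstract machinery of Sections~\ref{s: Abstract ISP}--\ref{s:ASD compact}. But the identification of $V_s$ with the concrete multiplication operator $f\mapsto (r_s/\|r_s\|)f$ does \emph{not} follow from the reproducing-kernel observation you make: knowing that $V_s$ sends the reproducing kernel at $0$ to the reproducing kernel at $0$ fixes the map at a single point only, and the intertwining relation $V_s\cU_{\theta_s}=\Phi_s V_s$ cannot be iterated to propagate this, because $\Phi_s$ is at this stage an abstract unitary on $E(s)$ with no a~priori description as a multiplication or shift-type operator on $H^2$. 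What is actually needed is a Hardy-space argument exploiting the Hankel structure---for instance the observation that $E(s)$, being $S^*$-invariant and finite-dimensional, must be of the form $p\,\cK_\vartheta$ for some $p\in H^2$ and a finite Blaschke product $\vartheta$ (this is the content of the ``nearly $S^*$-invariant'' theory, or can be extracted from Beurling's theorem applied to $zE(s)^\perp$). That is precisely the input supplied by \cite{GerPush2020, GerPush2021}, and it lies outside the abstract operator-theoretic framework you invoke.
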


\begin{rem}
\label{r:H_u rf notation}  
Few observations:
\begin{enumerate}
\item Since $\Gamma$ is a compact operator, the inner functions $\theta_s$ and $\theta^{1}_s$ are 
finite  
Blaschke products, $\deg \theta_s = \dim E (s)$, $\deg \theta^{1}(s) = \dim E_1(s)$. 

\item In \cite{GerPush2020} a slightly different notation was used. In particular, their inner 
functions 
$\psi_s$, $\wt\psi_s$, are related to our inner functions by $\theta_s(z) = z \psi_s(z)$, 
$\theta^{1}_s(z) = z\wt\psi_s$. 
\end{enumerate}
\end{rem}

%We should mention here that if $\theta(0)=0$, then $\theta\bar z \overline f \in \cK_\theta$, so 
%the 
%map $f\mapsto  \theta\bar z \overline f$ is an involution on $\cK_\theta$, so the map $rf\mapsto  
%r\theta\bar z \overline f$ is an involution on $\ker (\Gamma^*\Gamma - s^2 I) = r \cK_\theta$. 

Let us describe the spectral data presented in \cite{Gerard 2014 paper 2, GGAst}. Let us denote by 
$\cH_0:=\cspn\{|\Gamma|^n u: n\ge 0\}$. The spectral data then consisted of two sequences 
$\{\lambda_k\}_{k\ge 1}$, $\{\mu_k\}_{k\ge 1}$ corresponding to eigenvalues of 
$|\Gamma|\big|_{\cH_0}$ and to non-zero eigenvalues of $|\Gamma_1|\big|_{\cH_0}$ respectively, and 
two sequences of finite Blaschke products $\{\theta_k=\theta_{\lambda_k}\}_{k\ge1}$, 
$\{\theta^{1}_k=\theta^{1}_{\lambda_k}\}_{k\ge 1}$, 
$\theta_k (0)=0$, $\theta^{[1]}_k(0)=0$, where  

 $\theta_{\lambda_k}$ and $\theta^{1}_{\mu_k}$ are the  unique inner functions from the above 
 Theorem \ref{t:H_u rf} corresponding to the eigenvalues $s=\lambda_k$ and $s=\mu_k$ respectively.

\subsection{Connection with our results}
Now, let $s=\lambda_k$, $\theta=\theta_s=\theta_{\lambda_k}$ be the inner function from Theorem 
\ref{t:H_u 
rf}, part \cond1, and let $\rho=\rho_s=\rho_{\lambda_k}$ be the Clark measure for $\theta$, as 
defined 
above. 

Then from the description \eqref{e:Clark 01} of the Clark operator $\cC$, taking into account that 
$\theta(z)=1$ on $\supp \rho$, we can see, that
\begin{align*}
\cC \left(\theta(z)\bar z \overline{f(z)} \right)(\xi) = \bar\xi \overline{\cC f(\xi)}\qquad \xi\in 
\supp\rho \subset \T. 
\end{align*}

Theorem \ref{t:H_u rf} and the unitarity of the Clark operator $\cC$ imply  that $U = U_s : 
L^2(\rho)\to \Ker (|\Gamma|-s I)$, 
\begin{align}
\label{e:U Clark}
Uf = \|r_s\|^{-1} r_s \cC^* f , \qquad f\in L^2(\rho)
\end{align}
is unitary, and that 
\begin{align}
\label{e:model prelim}
\left(U^* H_u \big|_{E(s) } U f\right)(\xi) = s \bar\xi \overline 
{f(\xi)} .
\end{align}
Note also that 
\begin{align}\label{e:U* u_s}
\|u_s\|^{-1}\left(U^*u_s\right) (\xi) = \cC\left( z\mapsto  \bar z \theta_s(z) \right)(\xi) = 
\bar\xi \qquad \rho_{\lambda_k}\text{-a.e.}; 
\end{align}
here we used the fact that $\|u_s\|= s\|r_s\|$ and that $\theta_{\lambda_k} = 1$ 
$\rho_{\lambda_k}$-a.e.

Let now $\fc$ be the complex conjugation on $\T$, $\fc(\xi)=\bar \xi$, and let $\rho^\sharp := 
\rho\circ \fc$ be the reflection of the measure $\rho$.  Define the unitary operator $\cU=\cU_s: 
L^2(\rho^\sharp) \to \ker(|\Gamma|- s I) $ by 
\begin{align*}
\left(\cU^* f\right)(\xi) = \bar\xi\,\left(U^* f\right)(\bar\xi)  = \bar\xi \,\bigl[\cC  
(\|r_s\| f/r_s)\bigr](\bar \xi), \qquad 
f\in 
r \cK_{\theta_{s}}, 
\end{align*}
where $U$ is the unitary operator defined by \eqref{e:U Clark}. Then we can conclude from 
\eqref{e:model prelim} that 
\begin{align}
\label{e:model final}
\left(\cU^* \fC\Gamma \big|_{E(s) } \cU f\right)(\xi) = \xi \widebar 
{f(\xi)} , 
\end{align}
and that 
\begin{align*}
\|u_s\|^{-1}\cU^* u_s =\1. 
\end{align*}

One can see from Lemmas \ref{l:uniq J_p}, \ref{l:uniq Conj 01} and Remark \ref{r:J in spectral 
repr}  that 
\begin{align*}
\cU^* \fJ_u \big|_{E(s) } \cU f =  \widebar {f} , 
\end{align*}
and this operator is the representation of the conjugation $\fJ_p$, restricted to the subspace 
$\ker (R_{\lambda_k} I)$ in the space $L^2(\rho^\sharp)$; the unitary operator $\bvf$ is just the 
multiplication by $\xi$ in this representation. 

Thus, knowing inner function $\theta_s$, $s=\lambda_k$ we can construct the measure $\rho_k = 
\rho_s^\sharp=\rho_{\lambda_k}^\sharp$, and the   representation of the abstract spectral data 
(restricted to 
$\ker (R-\lambda_kI)$) in the space $L^2(\rho_k)$. Vice versa, knowing the measure $\rho_k$ we can 
construct the inner function $\theta_{s}$, $s=\lambda_k$. 

The case $s=\mu_k$ is similar, and even a bit simpler. We define the measure 
$\rho^{1}=\rho^{1}_s$ to be the Clark measure of the inner function 
$\theta^{1}_s$, and the measure $\rho^{1}_k$ being the reflection of $\rho^{1}_{s}$, 
$\rho^{1}_k = \rho^{1}_{s} \circ\fc$. The unitary operator $U=U_{s}: L^2(\rho^{1}_{s}) \to 
E_{1}(s) = \ker (|\Gamma_1| - s I)$ is defined as 
\begin{align*}
U f = \|u_{s}^{1}\|^{-1} u^{1}_{s} \cC^* f, \qquad f\in L^2(\rho^{1}_s), 
\end{align*}
where $\cC=\cC_s: \cK_{\theta^{1}_s}\to L^2(\rho^{1}_s)$ is the corresponding Clark operator. 
The operator $\cU=\cU_k : L^2(\rho^{1}_k) \to \ker (|\Gamma_1|-sI)$ is given by 
\begin{align*}
\left(\cU^* f\right)(\xi) = \left(U^* f\right)(\bar\xi)  =  \,\bigl[\cC  
(\|u^{1}_s\| f/u^{1}_s)\bigr](\bar \xi), \qquad 
f\in u^{1}_s \cK_{\theta^{1}_{s}}. 
\end{align*}
One can check that 
\begin{align*}
%\label{e:model final 01}
\left(\cU^* \fC\Gamma_1 \big|_{E(s) } \cU f\right)(\xi) = \xi \widebar {f(\xi)}  
\end{align*}
and that 
\begin{align*}
\|u^{1}_s\|^{-1}\cU^* u^{1}_s =\1. 
\end{align*}
so again, from the inner function $\theta^{1}_s$, $s=\lambda_k$ we can construct the measure 
$\rho^{1}_k$ (and all  relevant objects) and vice versa. 

\section{Appendix: Proof of the Abstract Borg's Theorem}
%\subsection{Proof for the Abstract Borg's Theorem}
%In subsection \ref{case of compact operator with cyclic vector}, we have given the description of 
%Abstract Borg's Theorem in Theorem \ref{t:Borg 01}. 
In this appendix, we will present the proof of the  Abstract Borg's Theorem (Theorem \ref{t:Borg 
01}). The proof follows \cite{Treil Liang 2022} and presented for the convenience of the reader.

First of all notice that the intertwining condition \eqref{e:Intertwine lambda mu} implies that the 
vector $p$ must is cyclic for $W$. Since everything is defined up to unitary equivalence, we can 
assume without loss of generality that $W$ is the multiplication $M_s$ by the independent variable 
$s$ in the weighted space $L^2(\rho)$, where $\rho$ is the spectral measure, corresponding to the 
vector $p$. 

Here the spectral measure can be defined as the unique (finite,  Borel, compactly 
supported) measure on $\R$ such that 
\begin{align*}
((W- zI)^{-1}p, p) = \int_\R \frac{d\rho(s)}{s-z}\qquad \forall z\in \C\setminus \R. 
\end{align*}
Since $W$ is a compact operator with eigenvalues $(\lambda_k^2)_{k\ge1}$, the measure $\rho$ is purely atomic, equivalently saying
\begin{align}
\label{e: spectral measure rho}
\rho = \sum_{k\ge 1} a_k \delta_{\lambda_k^2}
\end{align}
for some $a_k >0$. Note that in this representation the vector $p$ is represented by the function $1$ in $L^2(\rho)$. 

Moreover, the choice of the weights $a_k$ completely defines the triple $(W, W_1, p)$ (up to 
unitary equivalence). Indeed if the weights $a_k$ are known 
(recall that in our representation  $p\equiv1$), 
the operator $W_1$ is uniquely defined in the above spectral representation of $W$ as  $W_1 = W 
-pp^*$.  
\subsection{Preliminaries}
\label{s: C rho}
Let us recall some standard facts in perturbation theory. Let $W=W^*$ be a bounded self-adjoint operator, and let $p$ be its cyclic vector.  The spectral measure $\rho$ 
corresponding to the vector $p$ is defined as the unique Borel measure on $R$ such that 
\begin{align}
\label{e: spectral measure 02}
F(z):= ((W- zI)^{-1}p, p) = \int_\R \frac{d\rho(s)}{s-z}\qquad \forall z\in \C\setminus \R. 
\end{align}
As it is customary in perturbation theory, define a family of rank one perturbations
\begin{align*}
W^{\{\alpha\}} := W+ \alpha p p^*, \qquad \alpha\in\R . 
\end{align*}
Note that in this notation $W_1 = W^{\{-1\}} $. 

The spectral measures $\rho^{\{\alpha\}}$  are defined as the unique Borel measures such that 
\begin{align}
\label{e: spectral measure 03}
F^{\{\alpha\}}(z):= ((W^{\{\alpha\}}- zI)^{-1}p, p) = \int_\R 
\frac{d\rho^{\{\alpha\}}(s)}{s-z}\qquad \forall z\in \C\setminus \R; 
\end{align}
i.e.~the functions $F^{\{\alpha\}}$ are the Cauchy transforms of the measures $\rho^{\{\alpha\}}$. 

The relation between the Cauchy transforms $F$ and $F^{\{\alpha\}}$ %, %cf. \cite{???}
is given by the famous Aronszajn--Krein formula,
\begin{align}
\label{e: F F^gamma}
F^{\{\alpha\}}= \frac{F}{1+\alpha F} , 
\end{align}
which is an easy corollary of the standard resolvent identities. 

Recall that our operator $W_1$ is exactly the operator $W^{\{-1\}}$, so let us for the consistency 
denote $F_1:= F^{\{-1\}}$. Identity \eqref{e: F F^gamma} can then be rewritten as 
\begin{align}
\label{e:F_1 via F}
F_1 & = \frac{F}{1-F}  
\intertext{so}
\label{e: F/F_1}
\frac{F}{F_1} &= 1-F. 
\end{align}

Hence to prove the first part of Theorem \ref{t:Borg 01} (existence and uniqueness of the triple $W$, 
$W_1$, $p$), it is sufficient to show that there exists a unique measure $\rho$ of form \eqref{e: 
spectral measure rho}
%$\rho = \sum_{k\ge 1} a_k \delta_{\lambda_k^2}$ 
such that 
if $F$ is the Cauchy transform of $\rho$
\begin{align}
\label{e: C rho}
F(z) = \int_\R \frac{d\rho(s)}{s-z}\qquad \forall z\in \C\setminus \R
\end{align}
then the function $F_1$ defined by \eqref{e:F_1 via F} has the poles exactly at points $\mu_k^2$. 
Here the points $\lambda_k^2$ and $\mu_k^2$ are given, and the weights $a_k>0$ are to be found. 

\begin{rem}
\label{r:intertwine}
The intertwining relations \eqref{e:Intertwine lambda mu} follow immediately from \eqref{e:F_1 via 
F}. Indeed, 
one can see from \eqref{e:F_1 via F} that poles $\mu_k^2$ of $F_1$ are exactly the points $x\in\R$ 
where $F(x) =1$. The integral representation \eqref{e: C rho} can be written as 
\begin{align*}
F(x) = \sum_{k\ge1} \frac{a_k}{\lambda_k^2-x}.  
\end{align*}
Each term is positive for $x<\lambda_k^2$ and negative for $x>\lambda_k^2$, so it is not hard to 
conclude that
\begin{align*}
\lambda_1^2>\mu_1^2 >\lambda_2^2>\mu_2^2 \ldots> \lambda_k^2 > \mu_k^2  > \ldots\ge 0,
\end{align*}
which is equivalent to the intertwining relations \eqref{e:Intertwine lambda mu}. 
\end{rem}

\subsection{Guessing 
%of the spectral measure \texorpdfstring{$\rho$}{rho} and 
the function \texorpdfstring{$F$}{F}}
We want to reconstruct the spectral 
measure $\rho$ from the sequences $\{\lambda_k\}_{k\ge1}$ and $\{\mu_k\}_{k\ge1}$. 

Denote  
\begin{align*}
\sigma:= \sigma(W) = \{\lambda_k^2 \}_{k\ge1} \cup\{0\}, \qquad
\sigma_1:= \sigma(W_1) = \{\mu_k^2 \}_{k\ge1} \cup\{0\}. 
\end{align*}
It trivial that $F$ and $F_1$ are analytic in $\C\setminus \sigma$ and $\C\setminus \sigma_1$ 
respectively, and having simple poles at points $\lambda_k$ and $\mu_k$, $k\ge 1$ respectively 
(note that while the measure $\rho_1= \rho^{\{-1\}}$ can have a mass at $0$, the point $0$ is not 
an isolated singularity). 

Identity \eqref{e: F/F_1}  (which holds on $\C\setminus(\sigma\cup\sigma_1)$) implies that $F/F_1$ 
has simple poles at the points $\lambda_k^2$, $k\ge 1$, and that it can be analytically extended to 
$\C\setminus\sigma$. The (isolated) zeroes of $1-F$ must be at points where $F(z)=0$, so they must 
be only at the poles of $F_1$, i.e.~at the points $\mu_k^2$, $k\ge1$. 

So the function $F/F_1=1-F$ must be a function analytic on $\C\setminus \sigma$ with simple poles 
at the points $\lambda_k^2$, $k\ge 1$ and simple zeroes at the points $\mu_k^2$. 

One can try to guess such a function.  The simplest guess would be
\begin{align*}
1-F(z) = \prod_{k\ge 1 } \left( \frac{z- \mu_k^2  }{z-\lambda_k^2 } \right) \,.
\end{align*}
%%
% which has the desired poles and zeroes (provided that the product converges). 

We will show that this is indeed the case, namely that the product converges and $1-F$ is indeed  
given by the above formula. 
From there we will trivially get the formula for $F$, 
and then compute the weights $a_k$.
\subsubsection{Convergence of the product
%Existence and uniqueness part of main proof: proof of our guess function \texorpdfstring{$F$}{F}
}
Denote 

\begin{align}
\label{e:Phi}
\Phi(z):= \prod_{k\ge 0 } \left( \frac{z- \mu_k^2  }{z-\lambda_k^2 } \right) \,.
\end{align}
We have guessed that $1-F(z) = \Phi(z)$. But first we need to show that the product \eqref{e:Phi}
%defining $\Phi$ 
is well defined. 

\begin{lm}
\label{l:Phi converges}
The product \eqref{e:Phi} converges uniformly on compact subsets of $\C\setminus \sigma$, and 
moreover
\begin{align}
\label{e: Phi(infty)=1}
\Phi(\infty) := \lim_{z\to \infty} \Phi(z) = 1
\end{align}
\end{lm}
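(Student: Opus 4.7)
\textbf{Proof plan for Lemma \ref{l:Phi converges}.}
The plan is to reduce convergence of the infinite product to absolute convergence of an associated series of simple rational functions, and then to use the intertwining relations together with the hypothesis $\lambda_k^2\to 0$ to show that this series is dominated by a telescoping sum. First I would rewrite each factor as
\begin{align*}
\frac{z-\mu_k^2}{z-\lambda_k^2} = 1 + a_k(z), \qquad a_k(z) := \frac{\lambda_k^2-\mu_k^2}{z-\lambda_k^2}.
\end{align*}
Standard theory of infinite products reduces convergence of $\prod_k(1+a_k(z))$, uniformly on compacta in $\C\setminus \sigma$, to absolute summability of $\sum_k |a_k(z)|$ uniformly on compacta in $\C\setminus\sigma$.

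Fix a compact set $K\subset \C\setminus\sigma$. Since $\sigma=\{\lambda_k^2\}_{k\ge 1}\cup\{0\}$ is compact in $\R$ (because $\lambda_k^2\to 0$) and $K\cap\sigma=\emptyset$, we have $\delta := \dist(K,\sigma)>0$, hence $|z-\lambda_k^2|\ge \delta$ for all $z\in K$ and all $k\ge 1$. This gives
\begin{align*}
\sum_{k\ge 1} |a_k(z)| \le \frac{1}{\delta}\sum_{k\ge 1}(\lambda_k^2-\mu_k^2) \qquad \forall\, z\in K.
\end{align*}
The intertwining relations \eqref{e:Intertwine lambda mu} yield $\lambda_k^2>\mu_k^2>\lambda_{k+1}^2$, so telescoping gives
\begin{align*}
\sum_{k=1}^{N}(\lambda_k^2-\mu_k^2) \le \sum_{k=1}^{N}(\lambda_k^2-\lambda_{k+1}^2) = \lambda_1^2-\lambda_{N+1}^2 \le \lambda_1^2,
\end{align*}
and letting $N\to\infty$ we conclude $\sum_k(\lambda_k^2-\mu_k^2)\le \lambda_1^2<\infty$. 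Hence $\sum_k|a_k(z)|$ is uniformly bounded on $K$, which yields the claimed uniform convergence of the product on $K$.

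For \eqref{e: Phi(infty)=1} I would estimate $\log\Phi$ for large $|z|$. For $|z|\ge 2\lambda_1^2$ we have $|z-\lambda_k^2|\ge |z|/2$, so $|a_k(z)|\le 2(\lambda_k^2-\mu_k^2)/|z|$, and in particular $|a_k(z)|\le 1/2$ once $|z|$ is large enough. On that range the elementary estimate $|\log(1+w)|\le 2|w|$ (for $|w|\le 1/2$) gives
\begin{align*}
\bigl|\log\Phi(z)\bigr| \le 2\sum_{k\ge 1}|a_k(z)| \le \frac{4}{|z|}\sum_{k\ge 1}(\lambda_k^2-\mu_k^2) \le \frac{4\lambda_1^2}{|z|} \xrightarrow[|z|\to\infty]{} 0,
\end{align*}
so $\Phi(z)\to 1$ as $z\to\infty$. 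There is no real obstacle here; the only point where care is needed is verifying that one may legitimately pass to logarithms in the product (i.e.\ that no factor vanishes for large $|z|$), which is handled by the estimate $|a_k(z)|<1/2$ uniformly in $k$ for $|z|$ large enough.
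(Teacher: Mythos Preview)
Your proof is correct and follows essentially the same approach as the paper: both rewrite each factor as $1+a_k(z)$ with $a_k(z)=(\lambda_k^2-\mu_k^2)/(z-\lambda_k^2)$, bound $|a_k(z)|$ on compacta via $\dist(K,\sigma)$, and control $\sum_k(\lambda_k^2-\mu_k^2)$ by the telescoping estimate $\le\lambda_1^2$. The only cosmetic difference is that for $\Phi(\infty)=1$ you pass explicitly through $\log\Phi$, whereas the paper simply notes that $\sum_k|a_k(z)|\le 2\lambda_1^2/R$ for $|z|>R>2\lambda_1^2$ and invokes the standard product estimate directly.
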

\begin{proof}
Fix a compact $K\subset \C\setminus\sigma$
Trivially 
\begin{align}
\label{e: 1 - Phi_k}
\left| 1-\frac{z- \mu_k^2  }{z-\lambda_k^2 } \right| = \left| \frac{\lambda_k^2 - \mu_k^2  
}{z-\lambda_k^2 } \right|
\le C(K) (\lambda_k^2 - \mu_k^2) \qquad \forall z\in K . 
\end{align}
Since
\begin{align}
\label{e: sum la_k - mu_k}
\sum_{k\ge1} (\lambda_k^2 - \mu_k^2) \le \sum_{k\ge1} (\lambda_k^2 - \lambda_{k+1}^2) \le 
\lambda_1^2 <\infty
\end{align}
we have 
\begin{align*}
\sum_{k\ge 1} \left| 1-\frac{z- \mu_k^2  }{z-\lambda_k^2 } \right| \le C(K) \lambda_1^2 <\infty 
\qquad \forall z\in\C\setminus\sigma. 
\end{align*}
But this implies that the product \eqref{e:Phi} converges uniformly on compact subsets of 
$\C\setminus \sigma$. 

To prove the second statement, take any $R>2\lambda_1^2$. Then,  see \eqref{e: 1 - Phi_k} 
\begin{align*}
\left| 1-\frac{z- \mu_k^2  }{z-\lambda_k^2 } \right| = \left| \frac{\lambda_k^2 - \mu_k^2  
}{z-\lambda_k^2 } \right|
\le \frac2R (\lambda_k^2 - \mu_k^2) \qquad \forall z:\ |z|>R. 
\end{align*}
Using \eqref{e: sum la_k - mu_k} we get that for all $|z|>R$
\begin{align*}
\sum_{k\ge 1} \left| 1-\frac{z- \mu_k^2  }{z-\lambda_k^2 } \right| \le \frac{2\lambda_1^2}{R} , 
%\to 0 \quad \text{as } R\to\infty
\end{align*}
which immediately implies \eqref{e: Phi(infty)=1}. 
\end{proof}

\subsection{Herglotz functions and the uniqueness}
To prove the uniqueness part of Theorem \ref{t:Borg 01}, it is sufficient to show  that our  
function $F$ must be 
exactly $1-\Phi(z)$. Let us recall the following definition.
\begin{defin}
An analytic function $f$ defined on the upper half-place is Herglotz (or 
Nevanlinna) if $\im f(z)\ge 0$ for all $z\in\C_+$.  
\end{defin}

Note, that the  Cauchy Transform of a measure supported on a real line is a Herglotz function. 
%To prove that our guess for function $F$ is exactly $1-\Phi(z)$, we first show that $\Phi(z)$ 
%satisfies several properties given in the following lemma.
\begin{lm}
\label{l: properties Phi}
The function $\Phi$ defined by \eqref{e:Phi} satisfies the following properties
\begin{enumerate}
\item $\Phi(\overline z) = \overline{\Phi(z)}$; in particular, $\Phi(x)$ is real for all 
$x\in\R\setminus\sigma$. 

\item The function $\Phi$ has simple poles at points $\lambda_k^2$, and its only zeroes are the 
simple zeroes at points $\mu_k^2$. 

\item $\displaystyle \lim_{z\to \infty} \Phi(z) = 1$. 

\item The function $-\Phi$ is a Herglotz \tup{(}Nevanlina\tup{)}
 function in $\C_+$, i.e.~$\im \Phi(z) <0$ for 
all $z\in\C_+$.  
\end{enumerate}
\end{lm}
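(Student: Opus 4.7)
My plan is to handle (i), (ii), (iii) quickly from the uniform convergence of the product (already established in Lemma \ref{l:Phi converges}), and then spend the real effort on (iv), which I will prove by a geometric argument about the argument of each factor.

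For (i), each individual factor $f_k(z) := (z-\mu_k^2)/(z-\lambda_k^2)$ satisfies $f_k(\bar z) = \overline{f_k(z)}$ since $\mu_k^2,\lambda_k^2\in\R$. This property is preserved under pointwise (in particular, uniform-on-compacts) limits of partial products, so $\Phi(\bar z) = \overline{\Phi(z)}$. For (ii), uniform convergence on compact subsets of $\C\setminus\sigma$ shows $\Phi$ is meromorphic on $\C\setminus\sigma$. To isolate the pole at $\lambda_k^2$, write $\Phi(z) = f_k(z)\cdot \Phi_k(z)$, where $\Phi_k$ is the product over all $j\ne k$, which is analytic and, by the same argument as in Lemma \ref{l:Phi converges}, converges uniformly near $\lambda_k^2$. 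Since $\Phi_k(\lambda_k^2) = \prod_{j\ne k}(\lambda_k^2-\mu_j^2)/(\lambda_k^2-\lambda_j^2)\ne 0$ (all factors are nonzero and the product converges), $f_k$ contributes a simple pole at $\lambda_k^2$ and $\Phi_k$ is nonzero there. A symmetric argument shows the simple zero at $\mu_k^2$ and that no other zeros occur (any zero of $\Phi$ in $\C\setminus\sigma$ would force some $f_k$ to vanish). Part (iii) is Lemma \ref{l:Phi converges}.

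The substance is (iv). I would argue geometrically: for fixed $z\in\C_+$ write, using principal branches,
\begin{equation*}
\arg f_k(z) \;=\; \arg(z-\mu_k^2) - \arg(z-\lambda_k^2) \;=\; -\vartheta_k(z),
\end{equation*}
where $\vartheta_k(z)\in(0,\pi)$ is the angle at which the segment $[\mu_k^2,\lambda_k^2]$ is subtended from the point $z$. Indeed, for $z=x+iy$ with $y>0$ the ray from $a\in\R$ to $z$ has argument that \emph{increases} as $a$ increases, and $\mu_k^2<\lambda_k^2$. Thus each $f_k(z)$ lies in the open lower half-plane, and on a uniform-convergence neighborhood of $z$ the partial products of $\Phi$ have arguments that accumulate, so
\begin{equation*}
\arg\Phi(z) \;=\; -\sum_{k\ge 1}\vartheta_k(z).
\end{equation*}
By the intertwining relations \eqref{e:Intertwine lambda mu} the intervals $[\mu_k^2,\lambda_k^2]$ are pairwise disjoint and contained in $[0,\lambda_1^2]$, so $\sum_k\vartheta_k(z)$ is bounded above by the angle at which $[0,\lambda_1^2]$ is subtended from $z$, which is strictly less than $\pi$ for every $z\in\C_+$. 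Hence $\arg\Phi(z)\in(-\pi,0)$, i.e., $\im\Phi(z)<0$, which is exactly the Herglotz condition for $-\Phi$.

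The only delicate point I anticipate is justifying the identity $\arg\Phi(z)=\sum_k\arg f_k(z)$ without the argument jumping by multiples of $2\pi$. The cleanest way is to note that the partial products $\Phi^{(N)}(z) = \prod_{k\le N}f_k(z)$ have $\arg \Phi^{(N)}(z) = -\sum_{k\le N}\vartheta_k(z)$, interpreted as a sum of real angles in $(0,\pi)$ with total strictly less than $\pi$; since $\Phi^{(N)}(z)\to\Phi(z)$ (uniformly on compact subsets of $\C\setminus\sigma$, hence pointwise, with nonzero limit for $z\in\C_+$ by (ii)), the arguments converge, and the bound $\arg\Phi(z)\in(-\pi,0)$ passes to the limit. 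This avoids any branch-of-logarithm bookkeeping and yields (iv) immediately.
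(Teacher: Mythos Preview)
Your proof is correct and follows essentially the same approach as the paper: parts (i)--(iii) are handled quickly from the product representation and Lemma~\ref{l:Phi converges}, and for (iv) you use the same geometric argument via subtended angles, observing that $\arg f_k(z)=-\vartheta_k(z)$ and that the disjoint intervals $[\mu_k^2,\lambda_k^2]\subset[0,\lambda_1^2]$ force $\sum_k\vartheta_k(z)<\pi$. You add welcome detail on (i), (ii), and on why the argument of the limit equals the limiting sum of arguments, but the idea is the same.
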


\begin{proof}[Proof of Lemma \ref{l: properties Phi}]
Statements \cond1, \cond2  are trivial, statement \cond3 was stated and proved in Lemma \ref{l:Phi 
converges} above. 

Let us prove statement \cond4. Let $\Arg$ denote the \emph{principal value} of the argument, taking 
values in the interval $(-\pi, \pi]$. It is easy to see that for $z\in \C_+$
\begin{align*}
  - \Arg \left( \frac{z- \mu_k^2  }{z-\lambda_k^2 } \right)
= - \Arg \left( \frac{\mu_k^2 -z }{\lambda_k^2 - z} \right) =\alpha_k(z)
\end{align*}
where $\alpha_k(z) > 0$ is the aperture of the angle at which an observer at the point $z\in\C_+$ 
sees the interval $[\mu_k^2, \lambda_k^2]$. Since for the whole real line $\R$ the aperture is 
$\pi$ and the intervals $[\mu_k^2, \lambda_k^2]$ do not intersecr $\R_-=(-\infty, 0)$,  we can 
conclude that
\begin{align*}
0 < -\sum_{k\ge1} \Arg \left( \frac{\mu_k^2 -z }{\lambda_k^2 - z} \right) < \pi , 
\end{align*}
so $-\im \Phi(z)>0$ for all $z\in\C_+$. 
\end{proof}

it is easy to see that the function $1-F$ satisfies  properties \cond1--\cond4 from  Lemma 
\ref{l: properties Phi}. So the fact that $F=1-\Phi$ follows from the Lemma below. 
\begin{lm}
\label{l:Phi unique} The function $\Phi$ defined by \eqref{e:Phi} is the only analytic in 
$\C\setminus\sigma$ function satisfying properties \cond1--\cond4 of Lemma \ref{l: properties Phi}. 
\end{lm}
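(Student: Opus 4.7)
The plan is to compare $\Phi$ with any other candidate $\Psi$ satisfying (1)--(4) by forming the auxiliary function $\Theta := \log(-\Phi) - \log(-\Psi)$ on $\C_+$, where the principal branch is used; since $-\Phi, -\Psi$ are Herglotz by (4), both logarithms are analytic on $\C_+$ with imaginary parts strictly in $(0,\pi)$, so $|\im\Theta|<\pi$ on $\C_+$. The ultimate goal is a Liouville-type argument applied to $e^{i\Theta}$.

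The key step is to show $\Theta$ has real boundary values on $\R\setminus\sigma$. To this end I would note that $F := 1-\Phi$ and $G := 1-\Psi$ are both Herglotz functions analytic off $\sigma$ and vanishing at infinity, so by the Nevanlinna--Herglotz representation each is the Cauchy transform of a finite purely atomic positive measure supported in $\sigma$. In particular $F$ and $G$ are strictly increasing on every component of $\R\setminus\sigma$, and the vanishing of $\Phi, \Psi$ at $\mu_k^2$ gives $F(\mu_k^2) = G(\mu_k^2) = 1$. A short case analysis based on the interlacing \eqref{e:Intertwine lambda mu} then shows that the signs of $-\Phi(t)$ and $-\Psi(t)$ coincide at every $t\in\R\setminus\sigma$: both are positive on $\bigcup_k(\mu_k^2,\lambda_k^2)$ and negative on the complementary intervals $(-\infty,0)\cup(\lambda_1^2,\infty)\cup\bigcup_k(\lambda_{k+1}^2,\mu_k^2)$. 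Consequently $\log(-\Phi(t+i0))$ and $\log(-\Psi(t+i0))$ share the same imaginary part ($0$ or $\pi$), so $\Theta$ has real boundary values on $\R\setminus\sigma$.

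Schwarz reflection then extends $\Theta$ analytically across $\R\setminus\sigma$ with $\Theta(\bar z) = \overline{\Theta(z)}$, and the bound $|\im\Theta|<\pi$ carries over to $\C\setminus\R$. At each $\lambda_k^2$ both $\Phi$ and $\Psi$ have simple poles with \emph{positive} residues (the atoms of the Herglotz measures of $F$ and $G$), so $\Phi/\Psi$ has a positive real limit and $\Theta$ has a removable singularity. At each $\mu_k^2$ the sign pattern from the previous paragraph forces $\Phi'(\mu_k^2), \Psi'(\mu_k^2) < 0$, so $\Phi/\Psi$ again has a positive real limit and $\Theta$ extends analytically. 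Thus $\Theta$ is analytic on all of $\C\setminus\{0\}$.

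Finally, property (3) gives $-\Phi, -\Psi \to -1$ at infinity, approached from $\C_+$, so $\log(-\Phi), \log(-\Psi) \to i\pi$ and $\Theta \to 0$. Therefore $e^{i\Theta}$ is analytic on $\C\setminus\{0\}$, bounded (since $|e^{i\Theta}|=e^{-\im\Theta}\le e^\pi$), and tends to $1$ at infinity; Riemann's removable singularity theorem promotes it to a bounded entire function, Liouville forces it to be the constant $1$, and continuity of $\Theta$ yields $\Theta\equiv 0$, i.e.\ $\Phi = \Psi$. The main technical obstacle is the sign-matching argument of the second paragraph; once that is in hand, the accumulation of singularities at $0$---the natural place to fear nonuniqueness---is dispatched essentially for free thanks to the bounded imaginary part of $\Theta$.
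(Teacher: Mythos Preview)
Your proof is correct and shares its core with the paper's: property (iv) constrains the argument of $\Phi/\Psi$ to $(-\pi,\pi)$ on $\C_+$, and a Liouville-type argument finishes. The paper's execution is more direct. It works with the ratio $\Phi/\Psi$ itself, which is automatically analytic and zero-free on $\C\setminus\{0\}$ since the two functions share the same simple zeros and poles; the argument bound then says this ratio omits the negative real axis, so either Picard or---more elementarily---the observation that the principal branch of $\sqrt{\Phi/\Psi}$ has nonnegative real part, together with Casorati--Weierstrass, shows $0$ is not an essential singularity, and Liouville gives $\Phi/\Psi\equiv 1$. Your logarithmic packaging $\Theta=\log(-\Phi)-\log(-\Psi)$ encodes the same information, and your bounded auxiliary $e^{i\Theta}$ plays the role of the paper's square root; the cost is that taking logs manufactures singularities at every $\lambda_k^2$ and $\mu_k^2$, forcing you through Schwarz reflection and a point-by-point removability check that the paper sidesteps entirely. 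Your sign-matching paragraph is fine, though it can be shortened: one only needs that $\Phi$ and $\Psi$ have the \emph{same} sign on each component of $\R\setminus(\sigma\cup\{\mu_k^2\})$, not the actual sign pattern, and this follows from properties (i), (ii) and the Herglotz residue signs without invoking the integral representation.
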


\begin{proof}
Let $\Phi_1$ be another such function. 
Both functions have simple poles at $\lambda_k^2$ and simple zeros at $\mu_k^2$ (and these are the 
only isolated singularities and zeroes for both functions),  hence $\Psi(z):=\Phi(z)/\Phi_1(z)$ is 
analytic and zero-free in $\C\setminus \{0\}$.

Moreover, for $x\in \R\setminus\{0\}$ we have $\Psi(x)>0$. Indeed, on $\R\setminus 
\sigma\setminus\sigma_1$ the functions $\Phi_{1,2}$ are real and have the same sign, so $\Psi(x)>0$ 
on $\R\setminus \sigma\setminus\sigma_1$. Since $\Psi$ is continuous and zero-free on $\R \setminus 
\{0\}$, this tells us that $\Psi$ is positive on $\R \setminus \{0\}$. 

Next, let us notice that $\Psi(z)$ does not take real negative values. If $\im z>0$, then 
$\im\Phi(z)>0$, $\im\Phi_{1}(z)>0$, so $\Psi(z)=\Phi(z)/\Phi_1(z)$ cannot be negative real. If $\im 
z<0$, the symmetry $\Psi(\overline{z})= \overline{\Psi(z)}$ implies the same conclusion. And, as we 
just discussed above,  on the real line $\Psi$ takes positive real values. 

So, $\Psi$ omits infinitely many points, therefore by the Picard's Theorem the point $0$ is not an 
essential singularity for $\Psi$. Trivial analysis shows that $0$ cannot be a pole, otherwise 
$1/\Psi$ is analytic at 0, which also contradicts to the fact that $\Psi$ can't take negative real 
values. Hence the point $0$ is a removable singularity for function $\Psi$, so $\Psi$ is an entire 
function. 
By Liouville's Theorem, condition $\Psi(\infty) = 1$ implies that $\Psi\equiv 1$ for all $z \in 
\C$, so $\Psi\equiv\Psi_1$

We can avoid using Picard's theorem by considering the square root $\Psi^{1/2}$, where we take the 
principal branch of the square root (cut along the negative half-axis). Since $\Psi$ does not take 
negative real values, the function $\Psi^{1/2}$ is well defined (and analytic) on $\C\setminus 
\{0\}$.  Trivially $\re\Psi(z)^{1/2}\ge 0$, so by the Casorati--Weierstrass Theorem $0$ cannot be 
the essential singularity for $\Psi^{1/2}$. Again, trivial reasoning shows that $0$ cannot be a 
pole, so again, $\Psi^{1/2}$ is an entire function. The condition $\Psi^{1/2}(\infty)=1 $ then 
implies that $\Psi^{1/2}(z)\equiv 1$.
\end{proof}

\subsection{Existence and computing %\texorpdfstring{$F$}{F} and 
the spectral measure \texorpdfstring{$\rho$}{rho}}
%Now the proof of the first part of Theorem \ref{t:Borg 01}  (existence and uniqueness of the 
%triple 
%$W$, $W_1$, $p$) is almost completed. Namely, we define $F:= 1-\Phi$, where $\Phi$ is defined by 
%\eqref{e:Phi}. 
%The function $F_1$ defined by \eqref{e: F F^gamma} has poles exactly at the points $\mu_k^2$. 
%Therefore, if we show that $F$ is the Cauchy transform \eqref{e: C rho} of the measure $\rho$ of 
%form \eqref{e: spectral measure rho}, then according to the discussion at the end of Section 
%\ref{s: C rho} we get the existence of the triple $W$, $W_1$, $p$.  
%
%The uniqueness of the triple $W$, $W_1$, $p$, i.e.~the uniqueness of the measure $\rho$ follows 
%from the uniqueness of the function $\Phi$, see Lemma \ref{l:Phi unique}. Namely, if $F$ is the 
%Cauchy transform \eqref{e: C rho} of the measure $\rho$ of form \eqref{e: spectral measure rho}, 
%and $F_1$ given by \eqref{e: F F^gamma} has poles exactly at the points $\mu_k^2$,  then the 
%function $\Phi:=1-F$  satisfies the properties \cond1--\cond4 of Lemma \ref{l: properties Phi}. 
%But 
%by Lemma \ref{l:Phi unique} such function $\Phi$ is unique, and so are the function $F$ and so the 
%measure $\rho$. 

To prove the existence part of Theorem \ref{t:Borg 01}, we just need to show that the function 
$F=1-\Phi$ is the Cauchy Transform of a measure $\rho$
\begin{align}
\label{e:rho 01}
\rho=\sum_{n\ge 1} a_n \delta_{\lambda_n^2} , \qquad a_n>0. 
\end{align}

One can almost get this result for free. 
Namely, the function $F=1-\Phi$ is a Herglotz function, analytic in $\C\setminus\sigma$, and 
statement 
\cond3 of Lemma \ref{l: properties Phi} implies that $F(\infty)=0$. Thus, 
it follows from general theory of Herglotz functions that the function that $F$ is a Cauchy 
transform of a measure $\rho$ supported on $\sigma$. Moreover, since $F$ has simple poles at points 
$\lambda_n^2$, one can see that $\rho(\{\lambda_n^2\})\ne 0$.  Therefore, to prove existence, it 
remains  to show that the measure $\rho$ does not have an atom at $0$. 

But a proof of this fact  
would require some computation, which is not much easier than 
an elementary  proof using the partial fraction decomposition of the function 
$\Phi$ presented below.   This proof does not use any theory of Herglotz functions, and also gives 
formulae for weights $a_n$.  
%$\rho(\{0\})=0$.  
\begin{lm}
\label{l: Phi decomposition}
The function $\Phi$ defined by \eqref{e:Phi} can be decomposed as

\begin{align}
\label{e: Phi 03}
\Phi(z) = 1 -  \sum_{n\ge 1} \frac{a_n}{\lambda_n^2-z} , 
\end{align}
where 

\begin{align}
\label{e: a_n}
a_n = (\lambda_n^2 - \mu_n^2 ) \prod_{k\ne n} \left(  \frac{\lambda_n^2 - \mu_k^2}{\lambda_n^2 - 
\lambda_k^2}     \right)
\end{align}
\end{lm}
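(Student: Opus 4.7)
The plan is to establish the identity first on the negative real axis $(-\infty, 0)$ via a truncation argument, then extend it to $\C \setminus \sigma$ by analytic continuation. Consider the partial products
\[
\Phi_N(z) := \prod_{k=1}^N \frac{z - \mu_k^2}{z - \lambda_k^2}.
\]
Each $\Phi_N$ is a rational function with $\Phi_N(\infty) = 1$ and simple poles exactly at $\lambda_1^2, \ldots, \lambda_N^2$, so a direct residue computation yields the partial fraction decomposition
\[
\Phi_N(z) = 1 - \sum_{n=1}^N \frac{a_n^{(N)}}{\lambda_n^2 - z}, \qquad
a_n^{(N)} := (\lambda_n^2 - \mu_n^2)\prod_{\substack{1\le k\le N \\ k\ne n}} \frac{\lambda_n^2 - \mu_k^2}{\lambda_n^2 - \lambda_k^2}.
\]

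Next I would control the limiting behavior of the coefficients. The intertwining relations \eqref{e:Intertwine lambda mu} make every factor in the product defining $a_n^{(N)}$ positive. Moreover, for $N \ge n$ the quotient $a_n^{(N+1)}/a_n^{(N)} = (\lambda_n^2 - \mu_{N+1}^2)/(\lambda_n^2 - \lambda_{N+1}^2)$ exceeds $1$ (since $\mu_{N+1}^2 < \lambda_{N+1}^2 < \lambda_n^2$), so $\{a_n^{(N)}\}_{N \ge n}$ is nondecreasing. The telescoping estimate $\sum_k(\lambda_k^2 - \mu_k^2) \le \lambda_1^2 < \infty$ used in the proof of Lemma \ref{l:Phi converges} then forces $a_n^{(N)} \nearrow a_n$ as $N \to \infty$.

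Now fix $\epsilon > 0$ and evaluate at $z = -\epsilon \in \R \setminus \sigma$. By Lemma \ref{l:Phi converges}, $\Phi_N(-\epsilon) \to \Phi(-\epsilon)$, which is finite. To pass the partial fraction side to the limit, one applies Fatou's lemma (using $a_n^{(N)} \nearrow a_n$) for the lower bound, and the trivial inequality $a_n^{(N)} \le a_n$ for the matching upper bound; this squeeze gives
\[
\sum_{n=1}^N \frac{a_n^{(N)}}{\lambda_n^2 + \epsilon} \;\longrightarrow\; \sum_{n=1}^\infty \frac{a_n}{\lambda_n^2 + \epsilon}
\]
as $N \to \infty$, so that $\Phi(-\epsilon) = 1 - \sum_n a_n/(\lambda_n^2 + \epsilon)$. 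In particular, the right-hand sum is finite, which (taking any fixed $\epsilon > 0$ and using boundedness of $\lambda_n^2$) yields $\sum_n a_n < \infty$. This summability guarantees that $\sum_n a_n/(\lambda_n^2 - z)$ converges normally on compact subsets of $\C \setminus \{\lambda_n^2\}_{n \ge 1}$ to an analytic function; since it agrees with $1 - \Phi(z)$ on $(-\infty, 0)$, analytic continuation promotes the identity to all of $\C \setminus \sigma$.

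The main obstacle is the double limit in the third paragraph: since $N$ governs both the number of summands and the magnitude of every individual coefficient $a_n^{(N)}$, one cannot exchange limit and sum termwise without extra input. The monotonicity of $a_n^{(N)}$ in $N$, established in the second paragraph, is precisely what powers the Fatou/squeeze argument and makes this exchange legitimate; everything else reduces to rational-function residue calculus and routine analytic continuation.
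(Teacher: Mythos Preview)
Your argument is correct and follows essentially the same route as the paper: truncate to $\Phi_N$, compute the partial fraction coefficients $a_n^{(N)}$, observe their monotone increase to $a_n$, and pass to the limit. The only cosmetic difference is that the paper evaluates at $z=0$ (obtaining the slightly sharper bound $\sum_n a_n/\lambda_n^2 \le 1$) and then uses this as a Weierstrass $M$-test majorant to get uniform convergence on every compact $K\subset\C\setminus\sigma$ directly, whereas you evaluate at $z=-\epsilon$, extract only $\sum_n a_n<\infty$, and finish by analytic continuation from the negative real axis; both work.
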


This representation immediately implies that $F$ is the Cauchy Transform of the measure $\rho$ 
given by \eqref{e:rho 01} with weights $a_n$ defined by \eqref{e: a_n}. 

\begin{proof}[Proof of Lemma \ref{l: Phi decomposition}]
Consider functions $\Phi\ci N$
\begin{align*}
\Phi\ci N(z) = \prod_{k=1}^N \left( \frac{z- \mu_k^2  }{z-\lambda_k^2 } \right)
\end{align*}
Trivially
\begin{align}
\label{e: Phi_N}
\Phi\ci N(z) = 1 -  \sum_{n\ge 1} \frac{a_n^N}{\lambda_n^2-z} 
\end{align}
where 

\begin{align*}
a_n^N = (\lambda_n^2-\mu_n^2) \prod_{\substack{k=1 \\ k\ne n}}^N \left(  \frac{\lambda_n^2 - 
\mu_k^2}{\lambda_n^2 - \lambda_k^2}     \right)  \qquad \text{if } n\le N, 
\end{align*}
and $a_n^N=0$ if $n>N$. 

 We know, see Lemma \ref{l:Phi converges} that $\Phi_{N}(z)$ converges   to $\Phi(z)$ uniformly on 
 compact subsets of $  \C \setminus \sigma$.

 Hence, to prove the lemma it remains to show that 

\begin{align*}
\sum\limits_{n= 1}^N \frac{a_n^N}{\lambda_n^2-z} \to \sum\limits_{n\ge 1} \frac{a_n}{\lambda_n^2-z} 
\qquad \text{as }N\to\infty
\end{align*}
uniformly on compact subsets of $\C\setminus \sigma$. 

Take $z = 0$ in \eqref{e: Phi_N}. Then
\begin{align*}
1-\sum\limits_{n \geq 1}\frac{a_n^N}{\lambda_{n}^2}=\prod\limits_{k=1}^{N}\left( \frac{ \mu_k^2  
}{\lambda_k^2 } \right) >0, 
\end{align*}
so $\sum\limits_{n \geq 1}\frac{a_n^N}{\lambda_{n}^2} \le 1$.

Notice that for any fixed $n$ the sequence $a_n^N \nearrow a_n$  as $N\to\infty$, so 
$\sum\limits_{n \geq 1}\frac{a_n}{\lambda_n^2} \le 1$. 

Take an arbitrary compact $K\subset \C\setminus \sigma$. Clearly for any $z\in K$
\[
\left| \frac{a_n^N}{\lambda_n^2 - z} \right| \le \frac{a_n^N}{\dist(K, \sigma)} \le 
\frac{a_n}{\dist(K, \sigma)} 
\le \frac{\lambda_1^2}{\dist(K, \sigma)} \cdot \frac{a_n}{\lambda_n^2}, 
\]
so the condition $\sum_{n\ge 1} a_n/\lambda_n^2 \le 1$ implies that 
the series $\sum\limits_{n\ge 1} \frac{a_n^N}{\lambda_n^2-z}$ converges uniformly on the compact  
$K$. 
\end{proof}
%Thus we have given the expression of $\{ a_k \}_{k=1}^{\infty}$, and we can write the spectral 
%measure $\rho$ as 
%\begin{align}
%\rho = \sum_{k\ge 1} a_k \delta_{\lambda_k^2}.
%\end{align}
\subsection{%Proof of abstract Borg's theorem: 
The trivial kernel condition}

To complete the proof of Theorem \ref{t:Borg 01}, it remains to show that the condition $\|W^{-1/2} 
p\|=1$ is equivalent to \eqref{e: norm q = 1 02}, and that if \eqref{e: norm q = 1 02} holds, then 
the condition $\|W^{-1} p\|=\infty$ is equivalent to \eqref{e: q not in ran R 02}. 

Let us first investigate the condition $\|W^{-1/2} p\|=1$. The operator $W$ is the multiplication 
by the independent variable $s$ in the weighted space $L^2(\rho)$, where $\rho= \sum_{k\ge1} a_n 
\delta_{\lambda_n^2}$ with $a_n$ given by 
\eqref{e: a_n}. 
Recall, see Lemma \ref{l: Phi decomposition}, that 
\begin{align}
\label{e: Phi decomp 01}
\prod\limits_{k=1}^{\infty}\Bigg( \frac{z-\mu_{k}^{2}}{z-\lambda_{k}^{2}} \Bigg)
= 1 - \sum\limits_{k=1}^{\infty}\frac{a_k}{\lambda_{k}^{2}-z}
\end{align}  
Plugging the real $x<0$ into \eqref{e: Phi decomp 01} and 
taking the limit of both sides as $x\to 0^-$ we get that   
\begin{align*}
\prod\limits_{k=1}^{\infty}\frac{\mu_{k}^{2}}{\lambda_{k}^{2}}  = 
1-\sum\limits_{k=1}^{\infty}\frac{a_{k}}{\lambda_k^2};  
\end{align*}
the interchange of limit and sum (product) can be justified, for example, by the monotone 
convergence theorem. 

Therefore $\sum_{k\ge 1} a_k/\lambda_k^2 = 1$ if and only if $\prod_{k\ge0} 
(\mu_k^2/\lambda_k^2)=0$. The latter condition is equivalent to 
\begin{align*}
\sum_{k\ge1} \left( 1- \frac{\mu_k^2}{\lambda_k^2}  \right) =\infty,
\end{align*}
which  is exactly the condition \eqref{e: norm q = 1 02}.

Now, let us investigate the condition $\|W^{-1} p\|=\infty$. It can be rewritten as 
\begin{align}
\label{e: sum a lambda^4}
\sum_{k\ge 1} \frac{a_k}{\lambda_k^4} = \infty.
\end{align}

Rewriting identity \eqref{e: Phi decomp 01} as
\begin{align*}
\prod_{k=1}^{\infty} \left( \frac{z- \mu_k^2  }{z-\lambda_k^2 } \right) &= 
1-\sum\limits_{k=1}^{\infty}a_{k}\frac{(\lambda_{k}^2-z)+z}{\lambda_{k}^2(\lambda_{k}^2-z)} \\
&= 1-\sum\limits_{k \geq 1}\frac{a_k}{\lambda_k^2} - \sum\limits_{k \geq 1}\frac{a_k z}{\lambda_k^2 
(\lambda_k^2 -z)} ,
\end{align*}
we see that if the condition \eqref{e: norm q = 1 02} holds, then 
\begin{align*}
%\label{e: G_1}
-\frac{1}{z}\prod\limits_{k=1}^{\infty}\Bigg( \frac{z-\mu_{k}^{2}}{z-\lambda_{k}^{2}} 
\Bigg)=\sum\limits_{k=1}^{\infty}\frac{a_k}{\lambda_k^2(\lambda_{k}^{2}-z)} .  
\end{align*}

Substituting $z=-\lambda_N^2$ we get  that 
\begin{align*}
\frac{1}{\lambda_N^2}\prod\limits_{k=1}^{\infty}\Bigg( \frac{\lambda_N^2 + \mu_{k}^{2}}{\lambda_N^2 
+\lambda_{k}^{2}} \Bigg)=\sum\limits_{k=1}^{\infty}\frac{a_k}{\lambda_k^2(\lambda_N^2 
+\lambda_{k}^{2})} .
\end{align*}
By the monotone convergence theorem 
\begin{align*}
\lim_{N\to\infty} \sum\limits_{k\ge 1}\frac{a_k}{\lambda_k^2(\lambda_N^2 +\lambda_{k}^{2})} =
\sum\limits_{k\ge 1}\frac{a_k}{\lambda_k^4} , 
\end{align*}
so the condition \eqref{e: sum a lambda^4} can be rewritten as 
\begin{align}
\label{e: prod1 = infty}
\lim_{N\to\infty} \frac{1}{\lambda_N^2}\prod\limits_{k=1}^{\infty}
\Bigg( \frac{\lambda_N^2 + \mu_{k}^{2}}{\lambda_N^2 +\lambda_{k}^{2}} \Bigg) =\infty
\end{align}
The following simple lemma completes the proof. 

\begin{lm}
\label{l: prod = infy 02}
The condition \eqref{e: prod1 = infty} holds if and only if 
\begin{align*}
\sum_{k\ge 1}\left(\frac{\mu_k^2}{\lambda_{k+1}^2} -1\right) = \infty .
\end{align*}
\end{lm}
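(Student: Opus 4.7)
The plan is to introduce an auxiliary product that telescopes against the original one, and then to apply a monotone convergence argument after passing to logarithms.  Specifically, since $\lambda_{k+1}^2\to 0$, the partial products
\[
\prod_{k=1}^{M}\frac{\lambda_N^2+\lambda_{k+1}^2}{\lambda_N^2+\lambda_k^2}=\frac{\lambda_N^2+\lambda_{M+1}^2}{\lambda_N^2+\lambda_1^2}
\]
converge to $\lambda_N^2/(\lambda_N^2+\lambda_1^2)$ as $M\to\infty$.  Dividing the product in \eqref{e: prod1 = infty} by this telescoping quantity rewrites it as
\[
\frac{1}{\lambda_N^2}\prod_{k\ge 1}\frac{\lambda_N^2+\mu_k^2}{\lambda_N^2+\lambda_k^2}=\frac{Q_N}{\lambda_N^2+\lambda_1^2},\qquad Q_N:=\prod_{k\ge 1}\frac{\lambda_N^2+\mu_k^2}{\lambda_N^2+\lambda_{k+1}^2}.
\]
Since $\lambda_1^2\le \lambda_N^2+\lambda_1^2\le 2\lambda_1^2$, this reduces the lemma to the equivalent statement that $Q_N\to\infty$ if and only if $\sum_{k\ge 1}(\mu_k^2/\lambda_{k+1}^2-1)=\infty$.

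Next I would pass to logarithms.  Setting
\[
b_{k,N}:=\frac{\mu_k^2-\lambda_{k+1}^2}{\lambda_N^2+\lambda_{k+1}^2},\qquad c_k:=\frac{\mu_k^2}{\lambda_{k+1}^2}-1,
\]
the intertwining relations \eqref{e:Intertwine lambda mu} give $\mu_k>\lambda_{k+1}$, so $b_{k,N},c_k\ge 0$ and each factor of $Q_N$ equals $1+b_{k,N}$.  The key structural observation is that as $N\to\infty$ the denominator $\lambda_N^2+\lambda_{k+1}^2$ decreases monotonically to $\lambda_{k+1}^2$, hence $b_{k,N}$ increases monotonically to $c_k$, with $b_{k,N}\le c_k$ for all $k,N$.

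Finally, by the monotone convergence theorem (applied to the counting measure on $\N$),
\[
\log Q_N=\sum_{k\ge 1}\log(1+b_{k,N})\nearrow \sum_{k\ge 1}\log(1+c_k)\in[0,\infty]
\]
as $N\to\infty$.  Combining this with the elementary fact that for non-negative numbers $c_k$ the series $\sum_k c_k$ and $\sum_k\log(1+c_k)$ converge or diverge together (bounded $c_k$: use $\log(1+x)\asymp x$ on a bounded set; unbounded $c_k$: infinitely many terms of $\log(1+c_k)$ are bounded below by a positive constant), we conclude that $Q_N\to\infty$ precisely when $\sum_k c_k=\infty$, completing the proof.  The argument is entirely elementary; there is no serious obstacle, just the telescoping trick followed by routine bookkeeping.
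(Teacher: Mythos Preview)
Your proof is correct and takes a genuinely different route from the paper's.  Both arguments begin with the same telescoping idea, rewriting the product over denominators $\lambda_N^2+\lambda_k^2$ in terms of the shifted denominators $\lambda_N^2+\lambda_{k+1}^2$; this absorbs the factor $1/\lambda_N^2$ and reduces the question to comparing $\prod_k\frac{\lambda_N^2+\mu_k^2}{\lambda_N^2+\lambda_{k+1}^2}$ with $\prod_k\frac{\mu_k^2}{\lambda_{k+1}^2}$.  From there the approaches diverge.  The paper first shows the tail $k\ge N$ of the original product is bounded between fixed positive constants, truncates to a finite product, and then interprets $G_N(z)=\prod_{k=1}^{N-1}\frac{\mu_k^2-z}{\lambda_{k+1}^2-z}$ as an analytic function with $\log|G_N|$ harmonic and non-negative on a disc of radius $2\lambda_N^2$ centered at $-\lambda_N^2$; Harnack's inequality then gives a two-sided comparison between $G_N(-\lambda_N^2)$ and $G_N(0)$.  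You instead keep the full infinite product, observe that each factor $1+b_{k,N}$ increases monotonically in $N$ to $1+c_k$, and apply monotone convergence to $\sum_k\log(1+b_{k,N})$.  Your argument is more elementary---it avoids the harmonic function machinery entirely and needs only monotone convergence and the standard equivalence of $\sum c_k$ and $\sum\log(1+c_k)$ for non-negative terms---while the paper's Harnack argument, though heavier here, is a technique that transfers to settings where simple monotonicity is not available.
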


\begin{proof}
First of all notice that 
\begin{align}
\label{e: prod bounds 01}
0 < C \leq \prod\limits_{k=N}^{\infty}\Bigg( 
\frac{\lambda_{N}^{2}+\mu_{k}^{2}}{\lambda_{N}^{2}+\lambda_{k}^{2}} \Bigg) \leq 1 %<\infty
\end{align}
with $C>0$ independent of $N$. 

Indeed, for all $k\ge N$ we trivially have
\begin{align}
\label{e: term bound 01}
\frac12 \le \frac{\lambda_{N}^{2}+\mu_{k}^{2}}{\lambda_{N}^{2}+\lambda_{k}^{2}} \le 1 .
\end{align}
The upper bound in \eqref{e: term bound 01} trivially implies the upper bound in \eqref{e: prod 
bounds 01}. 

To get the lower bound in \eqref{e: prod bounds 01} we use the estimate \eqref{e: term bound 01} 
and the inequality
\begin{align*}
\ln x \ge (\ln2) (x-1), \qquad \forall\  x \in [1/2, 1]. 
\end{align*}
Thus 
\begin{align*}
\sum_{k=N}^\infty \ln2 \left( \frac{\lambda_{N}^{2}+\mu_{k}^{2}}{\lambda_{N}^{2}+\lambda_{k}^{2}} 
-1   \right) 
= \sum_{k=N}^\infty  -\ln2  \frac{\lambda_{k}^{2} - \mu_{k}^{2}}{\lambda_{N}^{2}+\lambda_{k}^{2}}
\ge -\ln2 \sum_{k=N}^{\infty} \frac{\lambda_k^2-\mu_k^2}{\lambda_N^2}
\ge -\ln2
\end{align*}
we see that the lower bound in \eqref{e: prod bounds 01} holds with $C_1=\frac{1}{2}$. 

The estimate \eqref{e: prod bounds 01} implies that the condition \eqref{e: prod1 = infty} is 
equivalent to 
\begin{align*}
%\label{e: prod1 = infty 02}
\lim_{N\to\infty} \frac{1}{\lambda_N^2}\prod\limits_{k=1}^{N-1}
\Bigg( \frac{\lambda_N^2 + \mu_{k}^{2}}{\lambda_N^2 +\lambda_{k}^{2}} \Bigg) =\infty.  
\end{align*}
Since $\lambda_N^2 \le \lambda_N^2 + \lambda_k^2 \le 2 \lambda_N^2$ for $k\ge N$, the above 
condition is equivalent to 
\begin{align}
\label{e: prod1 = infty 02}
\lim_{N\to\infty} %\frac{1}{\lambda_N^2}
\prod\limits_{k=1}^{N-1}
\Bigg( \frac{\lambda_N^2 + \mu_{k}^{2}}{\lambda_N^2 +\lambda_{k+1}^{2}} \Bigg) =\infty, 
\end{align}
Denote
\begin{align*}
G_{N}(z) := %\frac{1}{\lambda_{N}^{2}}
\prod\limits_{k=1}^{N-1}\Bigg( \frac{\mu_{k}^{2} - z}{\lambda_{k+1}^{2} - z} \Bigg)
\end{align*}
The function $G_N$ is analytic in the half-plane $\re z < \lambda_{N}^2$ and satisfies the 
inequality $|G_N(z)|\ge 1$ there. Therefore the function $\ln|G_N|$ is harmonic and non-negative in 
the disc $D_N$ of radius $2\lambda_N^2$ centered at $-\lambda_N^2$.  So by the Harnack inequality 
\begin{align*}
\frac13 \ln |G_N(-\lambda_N^2)| \le \ln |G_N(0)| \le 3  \ln |G_N(-\lambda_N^2)| .
\end{align*}

Note that $G_N(0) = \prod_{k=1}^{N-1} \mu_k^2/\lambda_{k+1}^2 $, so the condition \eqref{e: prod1 = 
infty 02} (which translates to the condition $\lim_{N\to\infty} G\ci N (-\lambda_N^2) =\infty$)  is 
equivalent to 
\begin{align*}
\lim_{N\to\infty} G\ci N(0) =\lim_{N\to\infty} \prod_{k=1}^{N-1} \frac{\mu_k^2}{\lambda_{k+1}^2} 
=\infty.
\end{align*}
The latter condition is equivalent to the condition 
\begin{align*}
\prod_{k\ge 1} \frac{\mu_k^2}{\lambda_{k+1}^2} = \infty, 
\end{align*}
which in turn is equivalent to \eqref{e: q not in ran R 02}. 
\end{proof}

\def\cprime{$'$}
  \def\lfhook#1{\setbox0=\hbox{#1}{\ooalign{\hidewidth\lower1.5ex\hbox{'}\hidewidth\crcr\unhbox0}}}
\providecommand{\bysame}{\leavevmode\hbox to3em{\hrulefill}\thinspace}
\providecommand{\MR}{\relax\ifhmode\unskip\space\fi MR }
% \MRhref is called by the amsart/book/proc definition of \MR.
\providecommand{\MRhref}[2]{%
  \href{http://www.ams.org/mathscinet-getitem?mr=#1}{#2}
}

\end{document}